\newcommand{\be}{\begin{equation}}
\newcommand{\ee}{\end{equation}}
\newcommand{\beq}{\begin{eqnarray}}
\newcommand{\eeq}{\end{eqnarray}}
\newtheorem{thm}{Theorem}[section]
\newtheorem{ass}{Assumption}[section]
\newtheorem{lma}{Lemma}[section]
\newtheorem{prop}{Proposition}[section]
\newtheorem{cor}{Corollary}[section]
\newtheorem{defn}{Definition}[section]
\newtheorem{claim}{Claim}[section]
\theoremstyle{remark}
\newtheorem{rem}{Remark}[section]
\numberwithin{equation}{section}
\def\dps{\displaystyle}
\def\div{\mathrm{div}}
\def\be{\begin{equation}}
\def\ee{\end{equation}}
\def\bee{\begin{equation*}}
\def\eee{\end{equation*}}
\def\ol{\overline}
\def\lf{\left}
\def\ri{\right}
\def\bH{\mathbf{H}}
\def\cH{\mathcal{H}}
\def\vn{{\vec\nu}}
\def\by{\mathbf{y}}
\def\bx{\mathbf{x}}
\def\bX{\mathbf{X}}
\def\wn{\wt\nabla}
\def\cI{\mathcal{I}}
\def\bn{\mathbf{n}}
\def\cF{{\mathcal{F}}}
\def\K{K\"ahler }
\def\Ric{\text{\rm Ric}}
\def\Rm{\text{\rm Rm}}
\def\bT{{\mathbf{T}}}
\def\wt{\widetilde}
\def\la{\langle}
\def\ra{\rangle}
\def\p{\partial}
\def\ol{\overline}
\def\e{\epsilon}
\def\a{{\alpha}}
\def\b{{\beta}}
\def\R{\mathbb{R}}
\def\mS{\mathbb{S}}
\def\bF{{\bf F}}
\def\ve{\varepsilon}
\def\n{\nabla}
\def\ppt{\frac{\partial}{\partial t}}
\def\pps{\frac{\partial}{\partial s}}
\def\on{{\overline{\nabla}}}
\def\oR{\,{\overline{\mathrm{R}}}}
\def\oRm{\,{\overline{\mathrm{Rm}}}}
\def\oRic{{\overline{\Ric}}}
\def\un{{\vec\nu}}
\begin{document}

\title[]
{Prescribed mean curvature flow for noncompact hypersurfaces in Lorentz manifolds}

\author{Luen-Fai Tam}
\address[Luen-Fai Tam]{The Institute of Mathematical Sciences and Department of Mathematics, The Chinese University of Hong Kong, Shatin, Hong Kong, China.}
 \email{lftam@math.cuhk.edu.hk}

\renewcommand{\subjclassname}{
  \textup{2010} Mathematics Subject Classification}
\subjclass[2010]{Primary 53C44, Secondary 83C30}
\date{\today}

\begin{abstract} Motivated by previous study on mean curvature flow and prescribed mean curvature flow on spatially compact space or asymptotically flat spacetime, in this work we will find sufficient conditions for the short time existence of prescribed mean curvature flow on a Lorentz manifold with a smooth time function starting from  a complete noncompact spacelike hypersurface. Long time existence and convergence will also be discussed. Results will be applied to study some   prescribed mean curvature flows inside the future of the origin in the Minkowski spacetime.  Examples of spacetime related to the existence and convergence results near the future null infinity of the Schwarzschild spacetime are also discussed.

\end{abstract}

\keywords{prescribed mean curvature flow, Lorentz manifolds, spacelike hypersurfaces}

\maketitle

\markboth{  Luen-Fai Tam}{Prescribed mean curvature flow in Lorentz manifolds}

\section{Introduction}\label{s-intro}
There are well-developed theories  on mean curvature flow and prescribed mean curvature flow in Euclidean space and in Riemannian manifolds. Classical mean curvature flowed was developed by  of Huisken \cite{Huisken1984} and others.  One may consult   the book  \cite{AndrewsChowGuentherLangford} by Andrews-Chow-Guenther-Langford and the references therein.
On the other hand, people are interested in  constructing maximal or constant mean curvature spacelike hypersurfaces in Lorentz manifolds. Using elliptic method  important results have been obtained by Bartnik \cite{Bartnik1984,Bartnik1988}, Bartnik-Simon \cite{BartnikSimon1982}, Gerhardt \cite{Gerhardt1983}, Treibergs \cite{Treibergs} and Andersson-Iriondo \cite{AnderssonIriondo1999}, to name a few. It is natural to try to use  mean curvature flow or prescribed mean curvature flow to construct such kind of hypersurfaces. In this direction, works have been done by  Ecker-Husiken \cite{EckerHusiken1991},  Gerhardt \cite{Gerhardt2000} in the spatially   compact case, and by Ecker \cite{Ecker1997,Ecker1993} in the noncompact case, for example. In particular, using the constructed maximal surfaces in \cite{Bartnik1984} as barriers,  in \cite{Ecker1993} Ecker was able to use the mean curvature flow to construct spacelike maximal hypersurfaces in asymptotically flat spacetime. Other results on mean curvature flow and prescribed mean curvature flow for noncompact hypersurfaces in Minkowski spacetime have been studied thoroughly in \cite{Ecker1997}.   Recently there are also interesting works by Kr\"ocke et al. \cite{KPLMMMSSV} and Gentile-Vertman \cite{GentileVertman2023} on graphical prescribed mean curvature flows on some product or warped product spacetimes. These motivate us to study prescribed mean curvature flows for complete noncompact spacelike hypersurface in general Lorentz manifolds. In this work, we want to find sufficient conditions for   short time existence, long time existence and convergence of prescribed mean curvature flows in Lorentz manifolds. The results will be obtained by using well developed techniques,    especially those by Bartnik \cite{Bartnik1984},  Ecker-Husiken \cite{EckerHusiken1991} and Ecker \cite{Ecker1993}.

To state our results, let $(N^{n+1},G)$ be a Lorentz manifold with connection $\on$ and curvature tensor $\oRm$ and Ricci tensor $\oRic$. Assume there is a time function $\tau$ on $N$ so that $\on \tau$ is   time like past directed with {\it lapse function} $\a$ given by
 $\a^{-2}=-\la \on\tau,\on\tau\ra$ and let
  $\mathbf{T}=-\a\on\tau$
be the future pointing unit time like vector field.
Suppose $M$ is spacelike hypersurface in $N$ with   future pointing unit normal $\bn$, the {\it tilt factor} of $M$  with respect to  $\mathbf{T}$  is given  by $-\la \bn, \mathbf{T}\ra_G$ and the {\it height function} $u$ of $M$ is the function $u=\tau|_M$. To measure the size of a tensor, as in \cite{Bartnik1984,HawkingEllis}, introduce the  reference Riemannian metric $G_E$ with respect to $\bT$ by
\bee
G_E=G+2\omega\otimes\omega
\eee
where $\omega$ is the one form dual to $\mathbf{T}$. For a tensor $B$ in $N$, the square of its norm at each point with respect to $G_E$ is given  by $\la B,B\ra_{G_E}$ and its norm is denoted by
$|||B|||_{G_E}$. For any  $\Omega\subset N$, we define for $k\ge 0$:

\be\label{e-Omega-norm}
|||B|||_{G_E; k,\Omega}=:\sup_{0\le l\le k}\sup_{\Omega}|||\on^lB|||_{G_E}.
\ee
We will omit the subscript $G_E$ if there is no confusion.

 Let $\bX_0: M\to N$ be an immersed spacelike hypersurface and $\cH$ be a smooth function in $N$, then the prescribed mean curvature flow starting from $\bX_0$ is a map $\bF: M\times[0,s_0)\to N$ satisfying
\be\label{e-mcf-1}
\left\{
  \begin{array}{ll}
    \dps{\pps} \bF(p,s)=[(H-\cH)\vn](p,s)&\hbox{in $M\times[0,s_0)$,}\\
    \bF|_{s=0}=\bX_0,
  \end{array}
\right.
\ee
such that $\bF^s(\cdot)=\bF(\cdot,s)$ is a spacelike immersion, with mean curvature vector $\bH=\vn H  $ where $H$ is the mean curvature,  $\vn$ is the future timelike unit normal. $H, \cH, \vn$ are evaluated at the point $\bF(p,s)$. We also   denote $M_s$ to be the immersed surface given by $\bF^s$. In this work, we will study the questions of short time existence, longtime existence and long time behavior of solutions of \eqref{e-mcf-1} with $M_0$ being noncompact.    For the short time existence, we have the following:

\begin{thm}\label{t-shorttime Lorentz} Let $(N^{n+1},G)$, $n\ge 3$, and $\tau$ be as above. Let $\bX_0: M^n\to N$ be an immersed noncompact spacelike hypersurface with future time like unit normal $\bn$ and second fundamental form $A$ so that  $g_0=\bX_0^*(G)$ is a complete Riemannian metric and let $\cH$ be a smooth function on $N$. Assume the following:

\begin{enumerate}
\item[(i)] $\bX: (-a,a)\times M\to N$ given by $\bX(p,t)=\exp_{\bX_0(p)}(t\bn(\bX_0(p)))$ is defined for some $a>0$;
  \item [(ii)]  $|||\on \bT|||_{0,\Omega} \le c$ for some constant $c$, where   $\Omega=\bX((-a,a)\times M)$;
  \item [(iii)] $||| \oRm|||_{k, \Omega}\le c_k$ for some constant $c_k$   for all $k\ge 0$;
\item[(iv)] $ ||\n^k A||\le a_k$ for some constant $a_k$ on $\bX_0(M)$ for all $k\ge 0$,   where   $\n$ is the covariant derivative of the induced metric $g_0$ and the norm is with respect to this metric;

     \item [(v)] the tilt factor $-\la \bn(\bX_0(p)),\bT\ra\le v_0$  for some $v_0>0$ for all $p\in M$; and
       \item[(vi)] For $k\ge 0$, $||| \cH|||_{k,\Omega}<\infty$.
\end{enumerate}
Then  the prescribed mean curvature flow \eqref{e-mcf-1} has a solution $\bF$ on $M\times[0,\e]$ for some $\e>0$. Moreover,  the immersed hypersurface $M_s$  is complete, the second fundamental forms of $M_s$ together with all covariant derivatives are uniformly bounded in space and time and the tilt factor $-\la \vn(s),\bT\ra$ of $M_s$  is also uniformly bounded in space and time.

\end{thm}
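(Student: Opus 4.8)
The plan is to reduce the flow to a scalar parabolic equation for the height function and apply the standard theory of quasilinear parabolic PDE on complete manifolds with bounded geometry. First I would use the graphical representation: by hypothesis (i) and the completeness of $g_0$, for a short time the evolving hypersurface $M_s$ can be written as a graph $\tau = u(\cdot,s)$ over $M_0$ using the normal exponential map, or equivalently over a fixed spacelike slice of $N$ in Gaussian-type coordinates adapted to $\bT$ and $\tau$. In such coordinates the mean curvature operator $H$ becomes a quasilinear second order elliptic operator in the spatial derivatives of $u$ whose coefficients depend smoothly on $u$, $\nabla u$, and the ambient metric data; hypotheses (ii), (iii), (v), (vi) guarantee that this operator has bounded, uniformly parabolic coefficients (the tilt factor bound $v_0$ is exactly what keeps the graph spacelike with a quantitative gap, so the symbol stays uniformly elliptic), while (iv) gives the needed regularity and bounds on the initial data $u_0$. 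Thus \eqref{e-mcf-1}, after the usual reparametrization removing the tangential diffeomorphism component, is equivalent to a scalar equation $\partial_s u = \mathcal{Q}[u] - (\text{lower order})$ of the form $\partial_s u = a^{ij}(x,u,\nabla u)\nabla_i\nabla_j u + b(x,u,\nabla u)$ on the complete manifold $(M,g_0)$.

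Next I would invoke short-time existence for such equations on complete noncompact manifolds of bounded geometry. The key point is that the initial metric $g_0$ has bounded geometry: by (iv) and the Gauss equation together with (iii), the curvature of $g_0$ and all its covariant derivatives are bounded, and a lower bound on the injectivity radius follows from completeness plus these bounds (using, e.g., that $M_0$ is embedded as a spacelike graph with controlled geometry, so one can appeal to the standard injectivity-radius estimates, or cite the construction of Ecker \cite{Ecker1993} and Bartnik \cite{Bartnik1984}). On such a manifold, one solves the quasilinear equation by the usual scheme: fix a large parameter, freeze coefficients, solve the resulting linear equation in weighted Hölder or $C^{k,\alpha}$ spaces via a fixed point/continuity argument, and obtain a solution on a time interval $[0,\varepsilon]$ whose length depends only on the structural constants $c, c_k, a_k, v_0$ and the norms in (ii), (iii), (iv), (vi). I expect the paper to have set up precisely these function spaces earlier; the argument here is to verify that all the structural hypotheses of that abstract existence theorem are met, uniformly, which is where assumptions (i)--(vi) are consumed one by one.

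Having produced $u$ on $[0,\varepsilon]$, I would then establish the asserted a priori estimates to conclude that $M_s$ is complete with uniformly bounded second fundamental form and tilt factor. The tilt factor bound is the heart of it: one derives an evolution inequality for $v = -\langle \vn, \bT\rangle$ along the flow (as in Ecker--Huisken \cite{EckerHusiken1991} and Ecker \cite{Ecker1993}), of the schematic form $\heat v \le (\text{terms controlled by } |||\on\bT|||, |||\oRm|||, |||\mathcal{H}|||)\, v - |A|^2 v + \dots$; on the short interval $[0,\varepsilon]$ with bounded right-hand side one gets $v \le C(v_0,\varepsilon)$ by the maximum principle, which on a noncompact manifold requires a cutoff/Omori--Yau-type argument made legitimate by the already-established bounded geometry. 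Given the uniform tilt bound, uniform parabolicity persists on the whole interval, and then the estimates on $A$ and its covariant derivatives follow from the standard parabolic-regularity bootstrap (interior Schauder estimates applied on unit balls, using bounded geometry to make them uniform, plus the Bernstein-type gradient estimates of Ecker--Huisken); completeness of $M_s$ is preserved because $g(s)$ stays uniformly equivalent to $g_0$ on $[0,\varepsilon]$. The main obstacle, and the step that needs the most care, is exactly the noncompactness: making the maximum principle for $v$ and for $|A|^2$ rigorous without a compactness or a priori boundedness assumption, which forces one to propagate bounded geometry along the flow and to work in the right weighted spaces rather than simply quoting the compact-case results of \cite{EckerHusiken1991, Gerhardt2000}.
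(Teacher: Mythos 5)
Your overall plan---represent the flow as a scalar quasilinear parabolic equation for a height function over a Gaussian foliation, then invoke short-time existence on a complete manifold of bounded geometry---matches the skeleton of the paper's argument (Sections 2.1--2.2). But the paper does not pursue a freeze-coefficients fixed-point scheme, nor does it use the Ecker--Huisken tilt evolution inequality for the short-time bounds; it instead follows Hamilton's inverse-function-theorem strategy as in \cite{ChauTam2011}. The operator $\cF(w) = \partial_s w + \langle\partial_t,\vn\rangle^{-1}(H-\cH)$ is shown to be $C^1$ from an open subset $\mathcal{B}$ of $C^{2+\sigma,1+\sigma/2}_0$ (built on quasi-coordinates via Lemma \ref{l-bounded geometry}) into $C^{\sigma,\sigma/2}$, with $D\cF_v$ bijective (Lemmas \ref{l-F-range}, \ref{l-DF}, \ref{l-bijection}); an explicit approximate solution $v_0 = s(H^o - \cH)$ and a time-translated target $f_\lambda$ are used to land in the range and solve on $[0,\lambda]$.

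The difference matters for your third paragraph, which has a circularity problem as written. Applying the maximum principle to the evolution inequality for $v = -\langle\vn,\bT\rangle$ on a noncompact manifold (even with a Shi--Tam exhaustion function) requires an a priori growth bound on $v$---sub-exponential control relative to the fixed background---but that is precisely what you are trying to establish; ``already-established bounded geometry'' of $M_0$ at $s=0$ does not by itself propagate to $M_s$ without assuming what you are proving about $|A|$ and the tilt. The paper sidesteps this entirely: the open set $\mathcal{B}$ enforces $1-|\nabla w|^2 > \tfrac12$ and $|w|<a$, so any solution produced by the inverse function theorem automatically has uniformly bounded tilt factor. The bounds on $A$ and its covariant derivatives then follow from Schauder bootstrapping of the scalar equation for $w$ together with the uniform derivative bounds on $g(t)$ in Lemma \ref{l-foliation-2}, not from evolution inequalities for $|A|^2$. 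In short, the a priori control is built into the Banach space from the start rather than recovered a posteriori via parabolic estimates; the Ecker-type tilt evolution is used later, in the long-time existence argument (Lemma \ref{l-grad}), not here.
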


To study  the question of long time existence, let us fix some notation.  Let
$$\tau_-=\inf_N\tau,\ \  \tau_+=\sup_N\tau
$$
which may be infinite.  For any $ \tau_-<  \tau_1<\tau_2<\tau_+$, let
$$\Omega_{\tau_1,\tau_2}=\{p\in N|\ \tau_1<\tau(p)<\tau_2\}.
$$
Consider the following conditions:
\be\label{e-assumption-1}
\left\{
  \begin{array}{ll}
    |||\a|||_{0,\Omega_{\tau_1,\tau_2}}, |||\on \log \a|||_{0,\Omega_{\tau_1,\tau_2}}, |||\on\bT|||_{1,\Omega_{\tau_1,\tau_2}} \hbox{are finite};  \\
    |||\oRm|||_{k,\Omega_{\tau_1,\tau_2}},  \hbox{is finite for all $k\ge 0$};\\
|||\cH|||_{k,\Omega_{\tau_1,\tau_2}}  \hbox{is finite  all $k\ge 0$.}
  \end{array}
\right.
\ee
Since $G_E$ is not complete in general, we  introduce the  condition: There is   a smooth function $\rho>0$ defined on $N$ such that  for any $\tau_-<\tau_1<\tau_2<\tau_+$,
 \be\label{e-rho}
\left\{
  \begin{array}{ll}
     |||\on \rho|||_{\Omega_{\tau_1,\tau_q}}\hbox{ is finite};      \\
   \text {for any $\rho_0>0$ the set $\ol{\Omega}_{\tau_1,\tau_2}\cap\{p\in N|\ \rho(p)\le \rho_0\}$ is compact}.
  \end{array}
\right.
\ee
We have the following characterization of maximum time of existence:
\begin{thm}\label{t-longtime} Let $(N^{n+1},G)$ be a Lorentz manifold with a time function $\tau$ and a smooth function $\rho>0$  and let $\cH$ be a smooth function on $N$ so that they satisfy \eqref{e-assumption-1} and \eqref{e-rho}. In addition, assume $\ol\Ric(w,w)\ge -c^2$ for all unit timelike vector $w$ for    some $c$. Suppose  $\bX_0:M\to N$ is a noncompact immersed spacelike hypersurface with $ \bX_0(M)\subset \Omega_{\tau_1,\tau_2}$   for some $\tau_-<\tau_1<\tau_2<\tau_+$,  such that
 \begin{enumerate}
   \item[(i)] the induced metric on $\bX_0(M)$  is complete so that its second fundamental form $A$ satisfies $\sup_{\bX_0(M)}|\n^kA|<\infty$ for all $k\ge0$;
   \item[(ii)] its tilt factor $\kappa_0$ is uniformly bounded;
   \item[(iii)] $\inf_{M_0}\a >0$, where $\a$ is the lapse function of $\tau$.
 \end{enumerate}
Let  $s_{\max}>0$ be the supremum of $s_0$ so that   the prescribed mean curvature flow \eqref{e-mcf-1}  is defined on $M\times[0,s_0]$ with $\sup_{M\times[0,s_0]}|\n^k A|<\infty$ for all $k\ge 0$, and $\sup_{M\times[0,s_0]}\kappa<\infty$, where $A$ is the second fundamental form of $M_s$ and $\kappa$ is its tilt factor. If $s_{\max}<\infty$ then
$$
\sup_{M\times[0,s_{\max})}\tau(\bF)=\tau_+, \text{\ or\ } \inf_{M\times[0,s_{\max})}\tau(\bF)=\tau_-.
$$
\end{thm}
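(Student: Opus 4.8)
The plan is to argue by contradiction via a continuation argument. Suppose $s_{\max}<\infty$ but the conclusion fails, i.e. $\sup_{M\times[0,s_{\max})}\tau(\bF)<\tau_+$ and $\inf_{M\times[0,s_{\max})}\tau(\bF)>\tau_-$. Then there are $\tau_-<\tau_1'<\tau_2'<\tau_+$ with $\bF(M\times[0,s_{\max}))\subset\ol\Omega_{\tau_1',\tau_2'}$, so on the image all quantities in \eqref{e-assumption-1}, \eqref{e-rho} and the bound $\oRic(w,w)\ge -c^2$ are available with uniform constants. The goal is to show that under this confinement all flow quantities ($\kappa$, the $|\n^mA|$, and the induced metric) stay uniformly bounded on $M\times[0,s_{\max})$, so that the flow extends past $s_{\max}$ with the same a priori bounds, contradicting the definition of $s_{\max}$.

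The first and main estimate is the tilt bound. From the flow equation one derives an evolution inequality of Ecker type for $\kappa=-\la\vn,\bT\ra$,
\[
\lf(\pps-\Delta_{g(s)}\ri)\kappa\le -\kappa\big(|A|^2+\oRic(\vn,\vn)\big)-2\kappa^{-1}|\n\kappa|^2+\big(\text{terms in }\on\bT,\oRm,\a,\cH\big),
\]
and after using the Ricci lower bound to replace $-\kappa\,\oRic(\vn,\vn)$ by $\le c^2\kappa$ and absorbing cross terms of the form $|\cH|\,|A|\,\kappa$ into the favorable $-\kappa|A|^2$, this becomes $(\pps-\Delta_{g(s)})\kappa\le C_1\kappa+C_2$, with $C_1,C_2$ depending only on the slab bounds and on $\inf_{M_0}\a$ (condition (iii); note $\inf\a$ stays positive along the flow since $|||\on\log\a|||$ is bounded on the slab). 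On each $[0,s_1]\subset[0,s_{\max})$ the induced metrics are complete, so a maximum principle valid on complete noncompact manifolds applies — this is where $\rho$ and \eqref{e-rho} enter, to build a proper exhaustion of $M$ out of $\rho\circ\bF$ and run a localized Ecker--Huisken barrier argument — yielding $\kappa\le e^{C_1 s_1}(\kappa_0+C_2 s_1)$; since $s_{\max}<\infty$, this gives a uniform bound $\kappa\le\kappa_1$ on all of $M\times[0,s_{\max})$.

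Next, with $\kappa$ and the ambient geometry bounded on the slab, $\sup_{M\times[0,s_{\max})}|A|<\infty$ automatically: this is the Lorentzian phenomenon — already used for Theorem~\ref{t-shorttime Lorentz} — that for spacelike hypersurfaces a tilt bound forces a curvature bound, via the maximum principle applied to $|A|^2$ weighted by a suitable function of $\kappa$ (Bartnik, Ecker--Huisken). Differentiating the flow equations and bootstrapping then gives $\sup_{M\times[0,s_{\max})}|\n^mA|<\infty$ for all $m$, exactly as in the short-time theorem. Since $\pps g_{ij}=-2(H-\cH)A_{ij}$ with $|H|,|\cH|,|A|$ bounded, the metrics $g(s)$ are uniformly equivalent, hence uniformly complete, on $[0,s_{\max}]$; and since $|\pps\bF|=|H-\cH|$ is bounded while $|||\on\rho|||$ is bounded on the slab, $\rho\circ\bF$ stays bounded on each compact subset of $M$, so $\bF^s$ converges in $C^\infty_{\mathrm{loc}}$ as $s\to s_{\max}$ to a smooth spacelike immersion $\bF^{s_{\max}}:M\to N$ with image in a slightly larger interior slab, complete induced metric, $\sup|\n^mA|<\infty$, $\sup\kappa<\infty$ and $\inf\a>0$. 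Thus $\bX_0:=\bF^{s_{\max}}$ meets all hypotheses of Theorem~\ref{t-shorttime Lorentz} (the normal exponential map is defined on a short interval because the slab lies in the interior of $N$ with bounded ambient curvature), so the flow restarts and extends smoothly to $M\times[0,s_{\max}+\ve]$ with the required bounds, a contradiction.

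I expect the tilt estimate to be the main obstacle: one must derive the evolution inequality with precisely the sign structure that makes $|A|^2$ and $\oRic$ help rather than hurt, and one must run a maximum principle on the complete but noncompact moving hypersurfaces, where the only leverage is the function $\rho$ of \eqref{e-rho} together with the completeness of $g(s)$ on subintervals. Once the tilt is controlled, the curvature bound (a Lorentzian a priori estimate), the higher-order bootstrap, and the restart are essentially as in the compact and short-time theory.
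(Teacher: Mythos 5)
Your top-level continuation scheme agrees with the paper: assume $s_{\max}<\infty$ and confinement to a slab $\Omega_{\tau_1,\tau_2}$, derive a uniform tilt bound, then uniform curvature bounds, converge to $s_{\max}$, and restart via the short-time theorem (you also correctly note that $\inf\a>0$ is propagated by integrating $\on\log\a$ along the flow). However, the central step --- the tilt estimate --- has a genuine gap. Your claimed evolution inequality $(\pps-\Delta)\kappa\le C_1\kappa+C_2$ does not hold. The actual equation (Lemma~\ref{l-evolution-eqs}(viii)) contains the term $-\bT(H_\bT)$, and by Bartnik's estimate $|\bT(H_\bT)|\le C(n)\lf(|||\on^2\bT|||\,\kappa^3+|||\on\bT|||\,\kappa^2|A|\ri)$. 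The $\kappa^2|A|$ piece can be absorbed into the good $-\kappa|A|^2$ term, but only at the cost of $+\kappa^3$; the explicit $\kappa^3$ piece remains. Similarly the term $-(H-\cH)\la\on_\vn\bT,\vn\ra$ is of order $\kappa^2|A|$. So what you actually get is $(\pps-\Delta)\kappa\lesssim\kappa^3+\kappa$, whose ODE comparison blows up in finite time and gives no control whatsoever on $[0,s_{\max})$; there is no reason the blow-up time exceeds $s_{\max}$.

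This is precisely the obstruction that forces the paper to run Ecker's basic estimate (Lemma~\ref{l-grad}) on the weighted joint quantity $\Phi=e^{\lambda u}\kappa^2+\mu(H-\cH)^2$. The exponential height weight is not decorative: the confinement hypothesis $\tau_1<u<\tau_2$ bounds $e^{\lambda u}$, and Bartnik's identity $|\n u|^2=\a^{-2}(\kappa^2-1)$ together with the gradient-of-tilt estimate (Lemma~\ref{l-grad kappa}) produce, after completing the square against $\la\n e^{\lambda u},\n\kappa^2\ra$, a good $-\lambda^2 e^{\lambda u}\a^{-2}\kappa^2(\kappa^2-1)$ term of quartic order. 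Choosing $\lambda$ large makes this dominate the bad $+C\kappa^4$ terms when $\kappa\ge 2$, and the $\mu(H-\cH)^2$ summand contributes $-\frac{\mu}{2n}(H-\cH)^4$ to absorb the $\kappa^2H^2$ cross terms. Without this mechanism the estimate fails, and the rest of your argument (Lemma~\ref{l-A-est} for $|\n^m A|$, uniform equivalence of $g(s)$, use of $\rho$ and \eqref{e-rho} to keep the image in a compact set and pass to the limit, then restart by Theorem~\ref{t-shorttime Lorentz} plus Remark~\ref{r-short time}) does match the paper's structure. A secondary, minor inaccuracy: the maximum principle on the noncompact $M$ is run against an intrinsic exhaustion $\eta$ built from the bounded curvature of $M_0$ (Lemmas~\ref{l-exhaustion}, \ref{l-max}), not against $\rho\circ\bF$; the function $\rho$ is used only for the $C^0$ compactness of the image and the passage to the limit at $s_{\max}$.
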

Finally, we want to study the long time behavior of solution to \eqref{e-mcf-1}. We state two results.

\begin{thm}\label{t-convergence} Let $(N^{n+1},G)$,  $\bX_0$, $\cH$ and $\rho$ be as in Theorem \ref{t-longtime}. Suppose \eqref{e-mcf-1} has a long time solution $\bF$ defined on $M\times[0,\infty)$. In addition, assume the following:
\begin{enumerate}
   \item[(i)] $\tau_-<\tau_1\le \tau(\bF)\le \tau_2<\tau_+$ on $M\times[0,\infty)$ for some $\tau_1, \tau_2$. \item[(ii)] For any $0<s<\infty$ and any $m\ge 0$,
   $$\sup_{M\times [0,s)}(|\n^m A|+\kappa)<\infty,$$
 where $A$ is the second fundamental form of $M_s$ and $\kappa$ is its tilt factor.
\item[(iii)] $\cH\ge a>0$ for some constant $a$,  $\cH$ is monotone in the sense that $\la \on \cH,w\ra\ge0$ for all future directed timelike vector.  $H-\cH\ge - a+\ve_0$ for some $0<\ve_0<a$ on $M_0$.
\item[(iv)] $\oRic(w,w)\ge -c^2$ for any timelike unit vector $w$ with $\frac{\e_0^2}n-c^2>0$.
    \end{enumerate}
Then as $s\to\infty$, $\bF(\cdot,s)$ converges in $C^\infty$ norm in compact sets in $M$ to $\bF_\infty$ so that $\bF_\infty: M\to N$ is an immersed  spacelike hypersurface which is complete with the induced metric and such that the mean curvature $H_\infty$ is equal to $\cH$.
Moreover, if $\a^{-1}$ is uniformly bounded in $\Omega_{\tau_1,\tau_2}$ and if there is a smooth function $\phi\ge1$ on $M_0$ with $\sup_{M_0}(|\n\phi|+|\n^2\phi|)<\infty$ and $\phi\to\infty$ as $\bx\to \infty$ in $M_0$ so that $\sup_{M_0}\phi|H-\cH|<\infty$, then the height functions $u_\infty$ of $M_\infty$ and $u_0$ of $M_0$ satisfy $\lim_{\bx\to\infty}(u_\infty(\bx)-u_0(\bx))=0$.
\end{thm}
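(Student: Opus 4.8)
The plan is to reduce everything to the scalar quantity $f:=H-\cH$ along the flow (which is assumed to exist on $M\times[0,\infty)$) and to the evolution equation it satisfies. First I would reparametrize \eqref{e-mcf-1} by a time-dependent family of diffeomorphisms of $M$, so that the $M_s$ become graphs over $M_0$ along the integral curves of $\bT$; this presents the flow as a scalar quasilinear parabolic equation for the height function $u$ and makes the bounds of hypothesis (ii) directly usable. From the first-variation formula for the mean curvature under a normal deformation of speed $\phi=f$ — with the Lorentzian sign, which one fixes by checking it on the hyperboloids in Minkowski space — together with $\partial_s(\cH\circ\bF)=f\,\la\on\cH,\vn\ra$, one gets
\[
\lf(\pps-\Delta\ri)f=-\Big(|A|^2+\oRic(\vn,\vn)+\la\on\cH,\vn\ra\Big)\,f,
\]
where $\Delta=\Delta_{g(s)}$ and $A$ is the second fundamental form of $M_s$.

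The second step is $H-\cH\to0$ uniformly and exponentially. Since $\vn$ is future timelike, the monotonicity in (iii) gives $\la\on\cH,\vn\ra\ge0$. A continuity argument with the maximum principle — valid on the complete hypersurfaces $M_s$ since, by hypothesis (ii), the Gauss equation and \eqref{e-assumption-1}, on each finite time interval the metrics $g(s)$ are uniformly equivalent with Ricci bounded below — shows that $f\ge-(a-\ve_0)$ is preserved: at a spatial minimum of $f$ with $f<0$ one has $H=f+\cH\ge\ve_0$ (using $\cH\ge a$), hence $|A|^2\ge H^2/n\ge\ve_0^2/n$ by Cauchy–Schwarz and then $|A|^2+\oRic(\vn,\vn)\ge\ve_0^2/n-c^2=:\lambda>0$ by (iv), so $(\pps-\Delta)f>0$ there. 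Therefore $H\ge\ve_0$ in space and time, $|A|^2+\oRic(\vn,\vn)\ge\lambda$ throughout, and $w:=(H-\cH)^2$ satisfies $(\pps-\Delta)w\le-2\lambda w$. Since $\sup_{M_0}w<\infty$ (because $|A|$ and $\cH$ are bounded on $M_0$), the maximum principle gives
\[
\sup_{M_s}|H-\cH|^2\le e^{-2\lambda s}\,\sup_{M_0}|H-\cH|^2 .
\]

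For $C^\infty_{loc}$ convergence I would upgrade the finite-interval bounds of (ii) to bounds on $\kappa$ and $|\n^mA|$ that are uniform in $s$ on compact subsets of $M$, by interior gradient and curvature estimates in the spirit of Ecker and Ecker–Huisken adapted to \eqref{e-mcf-1} in a general Lorentz ambient (the estimates underlying Theorems \ref{t-shorttime Lorentz}--\ref{t-longtime}); the decisive new input is hypothesis (i), which traps the flow in the fixed slab $\Omega_{\tau_1,\tau_2}$, making the estimate on a parabolic cylinder $B_r(p)\times[s-1,s]$ independent of $s$. \emph{Securing this time-uniformity is the main obstacle; the rest is soft.} Granting it, $|\partial_s\bF|_{G_E}=(2\kappa^2-1)^{1/2}\,|H-\cH|\le Ce^{-\lambda s}$ on compacts, so $\bF(\cdot,s)$ is Cauchy in $C^0_{loc}$; with the uniform higher-order bounds and interior parabolic Schauder theory this improves to convergence $\bF(\cdot,s)\to\bF_\infty$ in $C^\infty_{loc}$, and since $H-\cH\to0$ the limit satisfies $H_\infty=\cH$. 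Finally $|\partial_s g(s)|_{g(s)}\le 2|H-\cH|\,|A|\le Ce^{-\lambda s}$ is integrable in $s$, so $g(s)$ stays uniformly equivalent to the complete metric $g_0=\bX_0^*(G)$ and converges to a complete metric $g_\infty$; hence $\bF_\infty$ is a complete spacelike immersion.

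For the last statement, fix the graphical parametrization over $M_0$ along the $\bT$–lines; one computes $\partial_s u=(H-\cH)/(\a\kappa)$, so, as $\kappa\ge1$, $|\partial_s u|\le(\sup_{\Omega_{\tau_1,\tau_2}}\a^{-1})\,|H-\cH|$ and $u_\infty(\bx)-u_0(\bx)=\int_0^\infty\partial_s u(\bx,s)\,ds$ pointwise. Besides the decay $\sup_{M_s}|H-\cH|\le Ce^{-\lambda s}$, a maximum-principle argument for $\phi^2(H-\cH)^2$ (with a cutoff for noncompactness) yields, for every $S$, $\sup_{M_s}\phi^2(H-\cH)^2\le M_S$ on $[0,S]$: here $\phi\ge1$ and $\sup_{M_0}(|\n\phi|+|\n^2\phi|)<\infty$ control the commutator and gradient terms arising from $\phi^2\Delta_{g(s)}(H-\cH)^2$ — after completing the square these are $\le C\phi^2(H-\cH)^2$ — while $\sup_{M_0}\phi|H-\cH|<\infty$ is the initial bound. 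Combining the two, $|H-\cH|(\bx,s)\le\min\{M_S^{1/2}/\phi(\bx),\,Ce^{-\lambda s}\}$ for $s\le S$, and splitting the $s$–integral at the crossover gives
\[
|u_\infty(\bx)-u_0(\bx)|\le \frac{C\,M_S^{1/2}}{\lambda\,\phi(\bx)}\big(\log\phi(\bx)+C\big)+\frac{C}{\lambda}\,e^{-\lambda S}.
\]
Given $\eta>0$, fix $S$ with $\frac{C}{\lambda}e^{-\lambda S}<\eta/2$; then, as $\phi\to\infty$ and $(\log\phi)/\phi\to0$, the first term is $<\eta/2$ for $\bx$ far out in $M_0$. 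Hence $\lim_{\bx\to\infty}(u_\infty(\bx)-u_0(\bx))=0$.
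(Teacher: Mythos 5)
Your strategy and calculations are broadly aligned with the paper's, but you have flagged and then left open exactly the step that the paper's proof turns on. You write that ``securing this time-uniformity is the main obstacle'' for the bounds on $\kappa$ and $|\n^m A|$, and propose to get them from interior estimates ``in the spirit of Ecker and Ecker--Huisken'' on parabolic cylinders $B_r(p)\times[s-1,s]$, but you never actually derive them. Interior estimates of that form are not obviously available here: to run them you would already need control of the tilt factor on the cylinder, and that control is precisely what is in question. The paper obtains the time-uniform bound by a \emph{global} argument, Lemma \ref{l-grad} (Ecker's basic estimate): one shows that $\Phi=e^{\lambda u}\kappa^2+\mu(H-\cH)^2$ is a supersolution, and the height weight $e^{\lambda u}$, bounded above and below because the flow stays in the slab $\Omega_{\tau_1,\tau_2}$ by your hypothesis~(i), absorbs the bad $\kappa^4$ reaction term for $\lambda$ large. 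That is the decisive mechanism; once $\kappa$ is bounded uniformly in $s$, Lemma \ref{l-A-est} bootstraps to $|\n^m A|$ and everything after that is as you describe. Without Lemma \ref{l-grad} or an equivalent substitute, your proof has a genuine gap, not a deferred routine step.

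A secondary but related issue: your maximum-principle argument for the preservation of $H-\cH\ge -a+\ve_0$ and for the exponential decay of $(H-\cH)^2$ is phrased in terms of an attained ``spatial minimum,'' but $M$ is noncompact. The paper's Lemma \ref{l-max} makes this rigorous by comparing against a barrier $e^{\sigma s+\eta}$ built from an exhaustion function $\eta$ with bounded gradient and Hessian under $g(s)$ (Lemma \ref{l-exhaustion}); that construction again feeds off the very bounds on $A$ and $\n A$ you have left unproved, so the two gaps compound. Your treatment of the height function at infinity via the weight $\phi$ and the crossover in $s$ is correct and in fact somewhat more explicit than the paper's, but it sits downstream of the same missing estimate.
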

We may relax the condition that $\cH\ge a>0$ in the above.

\begin{thm}\label{t-convergence-1} With the same assumptions and notation as in Theorem \ref{t-convergence}, with conditions (iii) and (iv) to be replaced by the following:
\begin{enumerate}
\item[(iii')] $\cH\ge 0$   and $\cH$ is monotone in the sense that $\la \on \cH,w\ra\ge0$ for all future directed timelike vector.  $H-\cH\ge0$ on $M_0$.
\item[(iv')] $\oRic(w,w)\ge 0$ for any timelike unit vector $w$.
    \end{enumerate}
Suppose $\bF(\cdot,s)$ is an embedding for all $s$. Then there exist $s_k\to\infty$ and a spacelike hypersurface $M_\infty$ with mean curvature $\cH$ such that $M_{s_k}\to M_\infty$ in the sense that for any compact $W\subset \ol\Omega_{\tau_1,\tau_2}$ set $d_{h}(M_{s_k}\cap W, M\cap W)\to 0$ as $k\to\infty$, where $d_h$ is the Hausdorff distance defined by $G_E$.
\end{thm}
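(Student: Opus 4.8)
The plan is to combine a parabolic maximum principle, a localized swept‑volume estimate, and elliptic regularity. In outline: (1) show that $H-\cH\ge 0$ persists along the flow; (2) deduce that the flow moves monotonically to the future and stays inside $\ol\Omega_{\tau_1,\tau_2}$; (3) obtain an a priori bound forcing $H-\cH$ to be $L^1$‑small on compact sets along a sequence of times; (4) extract a limit of the embedded hypersurfaces and identify it by regularity theory.

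\emph{Step 1.} Put $w=H-\cH$. Combining the standard evolution of the mean curvature under a normal flow of speed $w$ (the Jacobi operator applied to $w$) with the transport of $\cH$ along $\bF$ gives
\be
\pps w=\Delta_{g(s)}w+\lf(|A|^2+\oRic(\vn,\vn)-\la\on\cH,\vn\ra\ri)w .
\ee
By (ii) this zeroth order coefficient is bounded on every slab $M\times[0,s_0]$, where the induced metrics are complete with the geometric bounds needed for the maximum principle on complete noncompact hypersurfaces (as in Ecker--Huisken). Since $w\ge0$ at $s=0$ by (iii$'$), nonnegativity of $w$ propagates---no sign of the zeroth order coefficient is needed for this---so $H-\cH\ge0$ on $M\times[0,\infty)$.

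\emph{Steps 2--3.} Since $\on\tau=-\a^{-1}\bT$, the height $u_s=\tau\circ\bF^s$ satisfies $\pps u_s=\a^{-1}\kappa\,w\ge0$, so the flow is monotone to the future; as $\tau\circ\bF\le\tau_2<\tau_+$, every $M_s$ lies in $\ol\Omega_{\tau_1,\tau_2}$, and since $\bF^s$ is an embedding the $M_s$ are causally ordered achronal hypersurfaces. Fix a compact $W\subset\ol\Omega_{\tau_1,\tau_2}$ and let $V(s)$ be the $G_E$‑volume swept inside $W$ between $M_0$ and $M_s$. Writing $\bT=\kappa\vn+\bT^\top$ on $M_\sigma$ gives $d\mu^{G_E}_{M_\sigma}=(2\kappa^2-1)^{1/2}d\mu_{M_\sigma}$ and $|\vn|_{G_E}=(2\kappa^2-1)^{1/2}$, so the velocity $w\vn$ yields
\be
V'(s)=\int_{M_s\cap W}w\,(2\kappa^2-1)\,d\mu_{M_s}\ \ge\ \int_{M_s\cap W}w\,d\mu_{M_s}\ \ge\ 0 .
\ee
Since $0\le V(s)\le\mathrm{vol}_{G_E}(W)<\infty$ we get $\int_0^\infty\!\int_{M_s\cap W}w\,d\mu_{M_s}\,ds<\infty$, and since an embedded spacelike hypersurface projects onto the $\tau$‑slices with area contraction, $\sup_s\mu_{M_s}(W)<\infty$. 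Exhausting $N\cap\ol\Omega_{\tau_1,\tau_2}$ by compacta and diagonalizing produces $s_k\to\infty$ along which $\int_{M_{s_k}\cap W}(H-\cH)\,d\mu\to0$ for every compact $W$, with locally uniformly bounded areas.

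\emph{Step 4 (the main obstacle).} By the monotonicity in $s$ and the local area bound, along a further subsequence the $M_{s_k}$ converge, as sets, in the local Hausdorff distance of $G_E$ to a closed achronal set $M_\infty\subset\ol\Omega_{\tau_1,\tau_2}$ (via Blaschke selection; in local cylinder neighborhoods this is just the graph of the bounded monotone limit of the local height functions, which is $1$‑Lipschitz hence continuous), and as integral varifolds to $V_\infty$ with $\mathrm{spt}\,V_\infty=M_\infty$. Feeding $H=\cH+(H-\cH)$ with the $L^1_{\mathrm{loc}}$ smallness of $H-\cH$ into the first variation identity for $M_{s_k}$ identifies $V_\infty$ (equivalently, $M_\infty$ as a weak/viscosity solution) as having mean curvature $\cH$. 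The genuinely delicate point---and the reason the conclusion is only Hausdorff subconvergence rather than the $C^\infty$ convergence of Theorem \ref{t-convergence}---is promoting this weak limit to a smooth strictly spacelike hypersurface: the bounds of (ii) are only local in $s$, so one needs interior gradient (tilt) and curvature estimates for the large‑time slices $M_{s_k}$ that are uniform on compact subsets of $\ol\Omega_{\tau_1,\tau_2}$. This is exactly where the energy condition $\oRic\ge0$ and the positivity and monotonicity of $\cH$ in (iii$'$)--(iv$'$) are used, via a Lorentzian interior gradient estimate of Ecker--Bartnik type (which also keeps $|\vn|_{G_E}$ locally bounded, so that the first variation passes cleanly to the limit) together with the attendant regularity theory for spacelike hypersurfaces of prescribed mean curvature; this finally gives that $M_\infty$ is a smooth complete spacelike hypersurface with $H_\infty=\cH$.
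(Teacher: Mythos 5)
Your proposal takes a genuinely different route (weak/varifold limits rather than classical estimates), but it has several real gaps, and the key step of the paper's proof is missing. The central ingredient you skip is the \emph{pointwise} decay estimate: since $H-\cH\ge0$ propagates, $\cH\ge0$ forces $H\ge0$, and with $\oRic\ge0$ and $\langle\on\cH,\vn\rangle\ge0$, Lemma~\ref{l-evolution-eqs}(vi) combined with $|A|^2\ge H^2/n\ge (H-\cH)^2/n$ gives
\be
\lf(\pps-\Delta\ri)(H-\cH)^2\le -\tfrac2n(H-\cH)^4,
\ee
and a comparison with $s^{-1}$ via Lemmas~\ref{l-exhaustion} and~\ref{l-max} yields $(H-\cH)^2\le Cs^{-1}$ pointwise, uniformly in $\bx$. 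Your Steps~2--3 instead try to get $L^1_{\mathrm{loc}}$-in-space smallness along a subsequence by a swept-volume estimate. That is strictly weaker, and it does not suffice to conclude that the Hausdorff limit has mean curvature exactly $\cH$ without importing substantial varifold regularity machinery that you only gesture at; the pointwise estimate is what makes the identification of the limit's mean curvature immediate.

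There is a second misreading concerning the a priori estimates. You treat the tilt and curvature bounds as only local in $s$ and argue that one must supply Lorentzian interior gradient estimates for the large-time slices, and that conditions (iii$'$)--(iv$'$) are what make this possible. In fact the assumptions inherited from Theorem~\ref{t-convergence}---chiefly the height bound (i) and the local bounds (ii)---already give, via Ecker's basic estimate (Lemma~\ref{l-grad}, whose constants depend on $\tau_1,\tau_2$ but not on the time horizon) and Lemma~\ref{l-A-est}, bounds on $\kappa$ and $|\n^m A|$ that are uniform in space \emph{and} time; (iii$'$)--(iv$'$) play no role there. With these uniform bounds, with the pointwise decay of $H-\cH$, and with the observation that $\pps\lf(u-\delta\rho(\bF)\ri)\ge0$ (using \eqref{e-rho} and $H-\cH\ge0$) shows $\bF(K\times[0,\infty))$ is precompact in $N$ for each compact $K\subset M$, one simply covers by finitely many coordinate cylinders and invokes Lemma~\ref{l-convergence}; no varifold argument is needed. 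Finally, your closing claim that the limit is a \emph{complete} spacelike hypersurface overreaches: the theorem does not assert completeness, and (as the paper remarks explicitly) it is unclear whether the limit is complete under these hypotheses.
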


Let us discuss some of the assumptions in the above theorems. In Theorem \ref{t-shorttime Lorentz}, unlike the case of complete Riemannian manifold, condition (i) in general may not be true, unless, for example, the spacetime is timelike geodesically complete. Condition (ii)  is to ensure that the level sets $t=$constant have uniformly bounded tilt factors, at least for short time under the assumption (v). Together with (iii), (iv) one can construct suitable Banach spaces in order to apply inverse function theorem.  From the proof, one can see that it is sufficient to assume (iii), (iv), (vi) are true up to $k=3$. In case $M_0$ is smooth and compact without boundary, then (iv)--(v) will be satisfied automatically, and the proof can be applied.  We would like to point out that the method of proof of Theorem \ref{t-shorttime Lorentz}, in a more simple way, can be used to obtain short time existence of prescribed mean curvature flow starting from a complete noncompact hypersurface in a complete Riemannian manifold under suitable assumptions.
 In Theorem \ref{t-longtime},  the condition on $\oRic$ is used to obtain estimate of the tilt factor which has been derived by Ecker  \cite{Ecker1993}.    The assumption on the existence of the function $\rho$ is to ensure limit exists.   Condition (iii) is related to condition (i) of Theorem  \ref{t-shorttime Lorentz}, so that one can use that theorem to extend the prescribed mean curvature flow. Roughly speaking,  Theorem \ref{t-longtime} means that longtime existence is true if we have height  estimates, which can be obtained   if there exist suitable barrier surfaces. In Theorem \ref{t-convergence}, conditions (iii) and (iv) are used to assure the flow $\bF$ will converge exponentially fast as in \cite{GentileVertman2023}. (iii) can be replaced by the conditions that $\cH$ is monotone and $\oRic(w,w)\ge c^2>0$ for all unit timelike vector. In Theorem \ref{t-convergence-1}, the result is weaker. We do not claim that $\bF(\cdot,s)$ is convergent as maps. We only consider the convergence of hypersurfaces in terms of Hausdorff distance as in \cite[Theorem 3.8]{Bartnik1988}. Moreover, it is unclear if the limit surface is complete. In case, $\cH\equiv0$, then we do not need to assume $H\ge0$. In this case, this is the result in \cite{Ecker1993}. We should remark by changing the time orientation, dual results are true for Theorems \ref{t-convergence} and \ref{t-convergence-1}. See Remark \ref{r-dual} for more details.

 By the above results, in order to use prescribed mean curvature flow to obtain prescribed mean curvature hypersurfaces, under suitable assumptions, it is sufficient to obtain height  estimates. This can be done if suitable  barrier surfaces exist.   See Corollary \ref{c-existence} and Proposition \ref{t-Mink} for examples. Also, under additional assumptions on the foliation in \cite{AnderssonIriondo1999}, result similar to the result for maximal surface in asymptotically flat spacetime in \cite{Ecker1993} might also true for positive constant mean curvature cut in asymptotically Schwarzschild spacetime which was introduced and studied in   \cite{AnderssonIriondo1999}.

 The organization of this work is as follows: In section \ref{s-L-shortime}  we will prove the short time existence result. We need some  preliminary results  which will be discussed in subsection \ref{ss-setup}. In section \ref{s-long time}, we will prove the long time existence result Theorem \ref{t-longtime} and in section \ref{s-convergence}, we will prove the convergence results Theorems \ref{t-convergence} and \ref{t-convergence-1}.
 In subsection \ref{t-Mink}, we will give some applications on Minkowski spacetime. In subsection \ref{ss-Schwarzschild}, we will   do some computations in Schwarzschild spacetime near the future null infinity, which are related to the conditions in theorems mentioned above  and might be related to the work \cite{AnderssonIriondo1999}. In the appendix, we recall the definitions of certain H\"older spaces, which are used in the proof of short time existence, and we prove a convergence lemma which is used in the proof of Theorem \ref{t-convergence-1}.

\ \

{\it Acknowledgement}: The author would like to thank Albert Chau for useful discussions.

\section{Short time existence}\label{s-L-shortime}
\subsection{Setup and preliminary results}\label{ss-setup}
Before we prove the short time existence Theorem \ref{t-shorttime Lorentz}, we need some preparation. Let $(N,G)$, $\on$, $\oRm$,   $\tau$,  $\a$, $\bT$, $G_E$ be as in section \ref{s-intro}.
First, it is obvious that $G_E$ depends  on $\tau$ and  will be called   the reference   Riemannian metric  with respect to $\tau$.
 If $\tau'$ is another time function with corresponding future pointing unit normal $\mathbf{T}'$, and let $G_E'$ be the reference Riemannian metric with respect to $\tau'$, then $G_E, G_E'$ are different. However, if
$-\la\mathbf{T},\mathbf{T}'\ra$ (which is bounded from below by 1) is uniformly bounded from above then  they will be equivalent by the following result which is well-known and has been used in \cite{Bartnik1984}, see \cite{EckerHusiken1991} for example:

\begin{lma}\label{l-tilt-equivalent}
Let $(V^{n+1},G)$ be a Lorentz vector space. Choose a fixed time orientation. Let $\bT , \bT'$ be two future unit time like unit vectors with dual one forms $\omega, \omega'$. Consider the Euclidean metrics
$G_E=G+2\omega\otimes\omega, G_E'=G+2\omega'\otimes\omega'$. Let $B$ be a tensor   on $V$. Then the norms $|||B|||_{G_E}, |||B|||_{G_E'}$ are equivalent. In fact, if $v=-\la\bT,\bT'\ra$, and $B$ is a $(k,l)$ tensor, then
$$|||B|||_{G_E'}^2\le (n+1)^{k+l}\lf(4v^2\ri)^{k+l}|||B|||_{G_E}^2.$$
\end{lma}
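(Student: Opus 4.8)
The plan is to reduce the comparison of the two Euclidean norms to a comparison of the two positive definite metrics $G_E$ and $G_E'$ as bilinear forms, and then to control the operator norm of the change-of-basis transformation relating them. First I would observe that the norm of a $(k,l)$-tensor with respect to a Euclidean inner product is built multiplicatively from the norm on the underlying vector space $V$ and its dual; concretely, if $P : (V, G_E) \to (V, G_E')$ is the identity map viewed between the two inner product spaces, with operator norm $\|P\|$, then for any $(k,l)$-tensor $B$ one has $|||B|||_{G_E'} \le \|P\|^{\,k}\,\|P^{-*}\|^{\,l}\,|||B|||_{G_E}$, where $P^{-*}$ is the induced map on dual spaces. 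So it suffices to bound $\|P\|^2 = \sup\{ G_E'(x,x)/G_E(x,x) : x \ne 0\}$ and the analogous quantity for the dual (which, by symmetry of the setup, will be controlled the same way with the roles of $\bT$ and $\bT'$ interchanged, noting $-\langle \bT,\bT'\rangle = -\langle \bT',\bT\rangle = v$ is symmetric).

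Next I would estimate $G_E'(x,x) = G(x,x) + 2\,\omega'(x)^2$ in terms of $G_E(x,x) = G(x,x) + 2\,\omega(x)^2$. Write $\omega'(x) = -\langle \bT', x\rangle_G$ and decompose $x = a\,\bT + y$ with $y \perp_G \bT$ (so $\langle y,y\rangle_G \ge 0$ since $\bT$ is timelike and $\bT^\perp$ is spacelike). Then $\omega(x) = -\langle \bT, x\rangle_G = a$ (using $\langle \bT,\bT\rangle_G = -1$), and $G(x,x) = -a^2 + \langle y,y\rangle_G$, hence $G_E(x,x) = a^2 + \langle y,y\rangle_G$. To bound $\omega'(x)^2 = \langle \bT', x\rangle_G^2$, write $\langle \bT', x\rangle_G = a\langle \bT',\bT\rangle_G + \langle \bT', y\rangle_G = -av + \langle \bT',y\rangle_G$. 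The timelike term is bounded by $|a| v$. For the spacelike term, decompose $\bT'$ itself relative to $\bT$: $\bT' = v\,\bT + z$ with $z \perp_G \bT$, and since $\langle \bT',\bT'\rangle_G = -1$ we get $\langle z,z\rangle_G = v^2 - 1 \le v^2$; then by Cauchy–Schwarz on the spacelike subspace $\langle \bT', y\rangle_G = \langle z, y\rangle_G$ with $|\langle z,y\rangle_G|^2 \le \langle z,z\rangle_G\langle y,y\rangle_G \le v^2 \langle y,y\rangle_G$. Combining, $\omega'(x)^2 \le 2v^2 a^2 + 2 v^2 \langle y,y\rangle_G = 2v^2 G_E(x,x)$ (using $v \ge 1$ to absorb constants cleanly), so $G_E'(x,x) \le G(x,x) + 4 v^2 G_E(x,x) \le (1 + 4v^2) G_E(x,x) \le (n+1)\cdot 4 v^2\, G_E(x,x)$ after a crude bound; in any case $\|P\|^2 \le 4 v^2$ up to harmless constants, and the exact form of the constant $(n+1)^{k+l}(4v^2)^{k+l}$ in the statement is obtained by being slightly lossy in these inequalities and accounting for the $k+l$ tensor slots (the $(n+1)$ factors coming, if one wishes, from expanding norms in a $G_E$-orthonormal basis rather than intrinsically).

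Finally I would assemble the pieces: $|||B|||_{G_E'}^2 \le \|P\|^{2k}\|P^{-*}\|^{2l} |||B|||_{G_E}^2$, and since the dual map estimate is symmetric in $\bT \leftrightarrow \bT'$ it is governed by the same quantity $v$, giving $\|P^{-*}\|^2 \le 4v^2$ up to the same constants, hence the claimed bound $|||B|||_{G_E'}^2 \le (n+1)^{k+l}(4v^2)^{k+l}|||B|||_{G_E}^2$. The one genuinely delicate point — though still elementary — is the careful decomposition of $\bT'$ against the $G$-orthogonal splitting $V = \mathbb{R}\bT \oplus \bT^{\perp_G}$ and the verification that $\langle z,z\rangle_G = v^2 - 1$; everything else is Cauchy–Schwarz on a genuinely positive-definite subspace plus bookkeeping of tensor slots. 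I do not expect any real obstacle here; the main care is simply to route all estimates through the \emph{positive definite} forms so that Cauchy–Schwarz applies, and to keep the constants consistent with the stated inequality.
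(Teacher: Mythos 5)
Your proof is correct and uses a genuinely different route from the paper's. The paper's sketch picks a shared orthonormal basis for $e_0^\perp \cap (e_0')^\perp$ and writes out the explicit Lorentz boost relating $(e_0, e_1)$ and $(e_0', e_1')$, with $v = \cosh\lambda$; the vector estimate then comes from expanding $|||w|||^2_{G_E'}$ componentwise and using $\cosh^2\lambda + \sinh^2\lambda \le 2\cosh^2\lambda$, and the tensor estimate by evaluating $B$ on the primed orthonormal basis and applying the vector bound slot by slot (this is where the $(n+1)^{k+l}$ factor arises). You instead stay basis-free: decompose $x = a\bT + y$ and $\bT' = v\bT + z$ against the $G$-orthogonal splitting $V = \R\bT \oplus \bT^{\perp_G}$, exploit $\langle z,z\rangle_G = v^2 - 1 \le v^2$, and apply Cauchy–Schwarz on the positive-definite subspace $\bT^{\perp_G}$ to get the bilinear-form comparison $G_E' \le (1+4v^2)\,G_E$; the tensor case then follows from the standard multiplicativity of operator norms under $\otimes$ and the symmetry of the dual comparison in $\bT \leftrightarrow \bT'$. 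Your route is arguably cleaner — it requires no construction of a shared basis and yields the sharper constant $(1+4v^2)^{k+l}$, which dominates the stated bound (since $v \ge 1$ gives $1 + 4v^2 \le 5v^2 \le (n+1)\cdot 4v^2$ for $n \ge 1$) without any $(n+1)$ factor. The one spot where you are terse is the reduction $|||B|||_{G_E'} \le \|P\|^k\|P^{-*}\|^l\,|||B|||_{G_E}$ for general $(k,l)$-tensors; this is standard (operator norms are multiplicative across tensor products of inner product spaces, e.g., via singular values), but a reader might want that recalled explicitly, whereas the paper's componentwise argument sidesteps it.
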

\begin{proof} We sketch the proof. Let $e_0=\bT, e_0'=\bT'$. By choosing an orthonormal basis for the intersection   of $e_0^\perp$ and $(e_0')^\perp$, which are the subspaces perpendicular to $e_0, e_0'$ respectively, we may find $e_1, e_1'$ and $e_2,\cdots, e_n$ so that $e_0, e_1,e_2,\cdots, e_n$ and  $e_0', e_1',e_2,\cdots, e_n$
are orthonormal. It is easy to see that
\bee
e_0=\cosh\lambda\, e_0'+\sinh \lambda \,e_1';\  e_1=\sinh \lambda\, e_0'+\cosh \lambda \,e_1'
\eee
where $v=\cosh\lambda$.
Let $w$ be a vector so that
\bee
w=\sum_{\a=0}^n a_\a e_\a=(a_0\cosh \lambda +a_1\sinh \lambda)\,e_0'+(a_0\sinh \lambda +a_1\cosh \lambda)\,e_1'+\sum_{i=2}^n a_i e_i.
\eee

Hence
\bee
\begin{split}
|||w|||^2_{G_E'}=&(a_0\cosh \lambda +a_1\sinh \lambda)^2+(a_0\sinh \lambda +a_1\cosh \lambda)^2+\sum_{i=2}^n a_i^2\\
\le&2(\cosh^2\lambda+\sinh^2\lambda)\sum_{\a=0}^n a_\a^2\\
\le&4 v^2|||w|||^2_{G_E}.
\end{split}
\eee
 Similarly, $|||w|||^2_{G_E}\le 4 v^2|||w|||^2_{G_E'}$. On the other hand, take a $(0,2)$ tensor $B$ for example. Let $e_\a'$ be orthonormal with $e_0'=\bT'$.
\bee
\begin{split}
|||B|||_{G_E'}^2 =&\sum_{\a,\b}B^2(e_\a',e_\b')\\
\le&  \sum_{\a,\b}|||B|||_{G_E}^2 ||| e_\a'|||_{G_E}^2|||e_\b'|||_{G_E}^2\\
\le&  16v^4 (n+1)^2|||B|||_{G_E}^2.
\end{split}
\eee

The general case can be proved similarly.
\end{proof}

 Let $\bX_0:M\to N$ be an immersion as a spacelike hypersurface. We assume that the induced metric $g_0=\bX_0^*(G)$ on $M$ is complete. Define: $\bX: (-a,a)\times M\to N$ by
\be\label{e-X}
\bX(t,p)=\exp_{\bX_0(p)}(t\bn(\bX_0(p)))
\ee
We assume that there is $a>0$ so that $\bX$ is defined for all $p\in M$ and $t\in (-a,a)$. Let $\Omega=\bX((-a,a)\times M)$. Note that
$$\bX^*(G)=-dt^2+g(t)
$$
where $g(t)$ is the symmetric  tensor  on the slice $M_t=\{t\}\times M$: $g(t)=(\bX^t)^*(\cdot,t)(G)$ where $\bX^t(p)=\bX(t,p)$. In local coordinates $x^1,\cdots,x^n$ of $M$, then $t=x^0, x^1,\cdots, x^n$ will be local coordinates of $(-a,a)\times M$. In the following $\a,\b,\cdots$ run from 0 to $n$ and $i, j, \cdots$ run from $1$ to $n$. We want to find prescribed mean curvature flow which can be expressed as a function $t=w(p,s)$. Namely, the flow up to diffeomorphisms on $M$ is given by
$\bF(p,s)=\exp_{\bX_0(p)}(w(p,s)\bn(\bX_0(p)))$. We   try to solve for $w$. We need some preparations.

\begin{lma}\label{l-tilt} Assume $ |||\on \bT|||_{G_E}\le c$ in $\Omega$.
Let $v=-\la \frac{\p}{\p t},\bT\ra_G$, where we identify $\ppt$ in $(-a,a)\times M$ and $\bX_*(\ppt)$. Suppose $v(p,0)\le v_0$ for all $p\in M$. Then
\bee
v(p,t)\le 2v_0
\eee
for all for $p\in M$ and $|t|\le\dps{\frac1{4cv_0}} $.
\end{lma}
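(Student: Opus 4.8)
The plan is to differentiate the tilt factor $v=-\la\ppt,\bT\ra_G$ along the normal geodesics $t\mapsto\bX(t,p)$ and integrate the resulting first-order differential inequality. The structural input is that these curves are $G$-geodesics by the definition \eqref{e-X} of $\bX$ (indeed $\bX^*G=-dt^2+g(t)$), so $\on_{\ppt}\ppt=0$ and $\ppt$ stays a future-pointing unit timelike vector for every $t\in(-a,a)$; in particular $v\ge 1$ throughout $\Omega$. Differentiating and using $\on_{\ppt}\ppt=0$ together with metric compatibility of $\on$,
\[
\frac{\p v}{\p t}=-\la\on_{\ppt}\ppt,\bT\ra_G-\la\ppt,\on_{\ppt}\bT\ra_G=-\la\ppt,\on_{\ppt}\bT\ra_G .
\]

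Next I would estimate the right-hand side through the reference metric $G_E$. Working pointwise in a $G$-orthonormal frame $e_0=\bT,e_1,\dots,e_n$ — in which $G_E$ is the standard Euclidean metric — one has $|\la Y,Z\ra_G|\le|||Y|||_{G_E}\,|||Z|||_{G_E}$ for all $Y,Z$ by Cauchy--Schwarz, and the contraction bound $|||\on_{\ppt}\bT|||_{G_E}\le|||\on\bT|||_{G_E}\,|||\ppt|||_{G_E}$. Since the one-form $\omega$ dual to $\bT$ satisfies $\omega(\ppt)=\la\bT,\ppt\ra_G=-v$, we get
\[
|||\ppt|||_{G_E}^2=\la\ppt,\ppt\ra_G+2\,\omega(\ppt)^2=2v^2-1\le 2v^2 .
\]
Combining these with the hypothesis $|||\on\bT|||_{G_E}\le c$ on $\Omega$ gives $\big|\tfrac{\p v}{\p t}\big|\le|||\ppt|||_{G_E}^2\,|||\on\bT|||_{G_E}\le 2c\,v^2$.

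Finally, since $v(p,\cdot)$ is smooth and bounded below by $1$ on $(-a,a)$, the last inequality rewrites as $\big|\tfrac{\p}{\p t}(v^{-1})\big|\le 2c$, which integrates from $0$ to $t$ to give $\big|v(p,t)^{-1}-v(p,0)^{-1}\big|\le 2c|t|$, hence $v(p,t)^{-1}\ge v_0^{-1}-2c|t|$. For $|t|\le\tfrac1{4cv_0}$ the right-hand side is $\ge\tfrac1{2v_0}$, so $v(p,t)\le 2v_0$, which is the assertion.

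The argument is short and I do not expect a genuine obstacle; the only points needing care are the passage between the Lorentzian pairing $\la\cdot,\cdot\ra_G$ and the Euclidean norm $|||\cdot|||_{G_E}$ (handled by the $\bT$-adapted orthonormal frame, cf. Lemma \ref{l-tilt-equivalent}), and the remark — needed so that the ODE comparison is legitimate and $v^{-1}$ is smooth — that the normal geodesics keep $\ppt$ unit timelike and future-pointing on the whole interval, which is where the geodesic property is used.
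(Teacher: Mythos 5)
Your proof is correct and follows essentially the same route as the paper's: differentiate $v$ along the normal geodesics using $\on_{\ppt}\ppt=0$, bound $|\ppt v|$ by $2cv^2$ via the identity $|||\ppt|||_{G_E}^2\le 2v^2$ (which you derive directly from $\omega(\ppt)=-v$ rather than invoking Lemma \ref{l-tilt-equivalent}, a cosmetic difference), and integrate the resulting Riccati-type inequality. The only addition you make — writing the ODE inequality for $v^{-1}$ and noting $v\ge 1$ to justify it — makes explicit what the paper leaves to the reader, but the argument is the same.
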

\begin{proof} In the following, we will denote $|||\cdot|||_{G_E}$ with respect to $\bT$ simply  by $|||\cdot|||$.
  $\ppt$ is just the tangent vector of the geodesic $t\to \exp_{\bX_0(p)}(t\bn(\bX_0(p)))$ which is a future pointing unit vector.

\bee
|||\on_{\ppt} \bT|||\le ||| \ppt|||\,|||\on\bT|||\le c|||\ppt|||.
\eee
Since $\la\ppt,\ppt\ra=-1$, by Lemma \ref{l-tilt-equivalent}, we have
\bee
|||\ppt|||^2 \le 2v^2.
\eee
Hence
\bee
\lf|\ppt v\ri|=\lf|\la \on_{\ppt}\bT,\ppt\ra\ri|\le |||\on_{\ppt} \bT|||\, |||\ppt|||\le 2cv^2.
\eee
So if $|t|\le \frac1{4cv_0}$, then $v(t)\le 2v_0$.

\end{proof}

From now on, in the above setting, we always assume the following:

\begin{enumerate}
  \item[(i)] $|||\on\bT|||\le c$;
  \item[(ii)]  $-\la\ppt,\bT\ra\le v_0$ at $t=0$;
  \item[(iii)] $a<\frac1{4cv_0}$ so that $-\la \ppt, \bT\ra\le 2v_0$ for $|t|<a$.
\end{enumerate}

\begin{lma}\label{l-foliation-1}
Assume $c_0=\sup_\Omega|||\oRm|||_{G_E}<\infty$ and   $a_0=\sup_{M_0} |A| <\infty$ where $M_0=\{0\}\times M$  and $A$ is the second fundamental form of the immersed surface given by  $\bX_0$, which will also be denoted by $M_0$.  Then $\bX^t$ is an immersion for $|t|<\min\{a,\frac12 C(n)(a_0^2+c_0 v_0^2)^{-\frac12}\}$,
for some constant $C(n)$ depending only on $n$. Moreover,
$$
e^{-2b_0 t}g_0\le g\le e^{2b_0 t} g_0;\  |A(t)|\le b_0.
$$
where $b_0= 2(a_0^2+c_0v_0^2)^\frac12$.
Here $A(t)$ is the second fundamental form of the hypersurface given by the immersion $\bX^t$.
\end{lma}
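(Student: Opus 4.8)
The plan is to view the evolution of the foliation $\{M_t\}$ as an ODE along the normal geodesics and to control it via a Riccati-type comparison. Since $\bX(t,p)=\exp_{\bX_0(p)}(t\bn(\bX_0(p)))$, the curves $t\mapsto\bX(t,p)$ are timelike unit-speed geodesics orthogonal to $M_0$, so $\bX^*(G)=-dt^2+g(t)$ with $g(0)=g_0$ and $\partial_t g(t)=-2A(t)$, where $A(t)$ is the second fundamental form of the slice $M_t$ with respect to $\partial_t$ (up to sign convention). The shape operator $S(t)=g(t)^{-1}A(t)$ then satisfies the matrix Riccati equation $\partial_t S = S^2 + \mathcal{R}$, where $\mathcal{R}$ is the Riemann curvature operator $w\mapsto \oRm(w,\partial_t)\partial_t$ restricted to the slice (the Jacobi/radial curvature term). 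The first step is to write this equation carefully and to observe that, as long as the flow of the frame stays controlled, $|\mathcal{R}|\le c_0\,|||\partial_t|||^2$ pointwise; by Lemma 1.3 (with $\langle\partial_t,\partial_t\rangle=-1$ and tilt bounded by $2v_0$ on the relevant range) we get $|||\partial_t|||^2\le 2v^2\le 8v_0^2$, hence $|\mathcal{R}|\lesssim c_0 v_0^2$. I should be slightly careful that $|\mathcal{R}|$ here means the operator norm with respect to $g(t)$, and relate it to $|||\oRm|||_{G_E}$ using Lemma 1.2; this is where the factor $(n+1)$-type constants and the constant $C(n)$ will come from.

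Next I would run a standard scalar Riccati comparison. Let $\lambda(t)=\|S(t)\|_{g(t)}$ (operator norm); a Grönwall/bootstrap argument on $\frac{d}{dt}\lambda\le \lambda^2 + K$ with $K$ a bound for $|\mathcal{R}|$ and $\lambda(0)\le a_0$ shows $\lambda(t)$ stays bounded, say by $b_0=2(a_0^2+c_0v_0^2)^{1/2}$, on a time interval of length comparable to $(a_0^2+c_0v_0^2)^{-1/2}$: indeed as long as $\lambda\le b_0$ one has $\frac{d}{dt}\lambda\le b_0^2+K\le \tfrac12 b_0^2\cdot(\text{const})$, so $\lambda$ cannot reach $b_0$ before time $\sim b_0^{-1}$. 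This is a continuity (open–closed) argument: the set of $t$ where $\lambda\le b_0$ and $\bX^t$ is an immersion is open, nonempty, and the differential inequality prevents $\lambda$ from escaping, so it contains the whole asserted interval. Simultaneously the bound $|\partial_t g| = 2|A(t)|_{g(t)}\le 2b_0\, g(t)$ integrates to $e^{-2b_0t}g_0\le g(t)\le e^{2b_0t}g_0$, which in particular shows $g(t)$ is nondegenerate and $\bX^t$ is a spacelike immersion on that interval; the completeness of $g_0$ then passes to $g(t)$ by uniform equivalence.

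The main obstacle is the circularity in the curvature bound: the estimate $|\mathcal{R}|\le c_0 v_0^2$ uses that the tilt $v=-\langle\partial_t,\bT\rangle$ is controlled (Lemma 1.3 gives this only for $|t|\le 1/(4cv_0)$), and it also tacitly uses that we are comparing norms on $M_t$ via a metric that is itself being estimated; so the Riccati comparison, the tilt bound, and the metric equivalence all have to be bootstrapped together on a common short interval $|t|<\min\{a,\tfrac12 C(n)(a_0^2+c_0v_0^2)^{-1/2}\}$. I would handle this by fixing at the outset the interval $I=(-T,T)$ with $T$ the stated minimum, noting $T\le a\le 1/(4cv_0)$ so Lemma 1.3 applies throughout $I$ and $|||\partial_t|||^2\le 8v_0^2$ there, and then running the open–closed argument for the single quantity $\lambda(t)$ on $I$ with all constants already pinned down; the curvature term is then an honest a priori bound, not something to be re-derived, and the only thing genuinely being propagated is $\lambda\le b_0$. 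A secondary technical point is to make the identification $\partial_t g=-2A(t)$ and the Riccati equation precise in local coordinates $x^0=t,x^1,\dots,x^n$, and to keep track of the immersion hypothesis (that $\bX^t$ is an immersion is needed for $g(t)$ to be a genuine Riemannian metric rather than merely a symmetric $2$-tensor), which is exactly what the lower bound $g(t)\ge e^{-2b_0t}g_0$ delivers.
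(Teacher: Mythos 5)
Your proposal takes essentially the same approach as the paper: differentiate $g(t)$ and the second fundamental form along the timelike normal geodesics, obtain a Riccati-type evolution inequality, and close with a continuity/bootstrap argument on the interval where the tilt bound of Lemma \ref{l-tilt} is already in force. The only discrepancy is a sign in the matrix Riccati equation (with $\la\p_t,\p_t\ra=-1$ and $\p_t g_{ij}=2A_{ij}$ one gets $\p_t S=-S^2+g^{-1}\mathcal{R}$, not $\p_t S=S^2+\mathcal{R}$), but this is immaterial since you only use it through absolute-value bounds, and the paper's scalar version $\p_t|A|^2\le 2|A|^3+C(n)c_0v_0^2|A|$ carries the same content.
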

\begin{proof}    Let $p\in M$ be fixed.
Let $x^1,\cdots, x^n$ be local coordinates of $p$ and let $t=x^0$. Denote $\bX_*(\frac{\p}{\p x^i})$ simply by $\p_i$ and $\bX_*(\ppt)$ by $\p_t$ or $\p_0$. Since $\p_t$ is the unit tangent vector of the time like geodesic $t\to \exp_{\bX_0(q)}(t\bn(\bX_0(q)))$,
\bee
\ppt\la\p_t,\p_i\ra=\la \on_{\ppt}\ppt,\p_i\ra+\la\ppt,\on_\ppt \p_i\ra=0.
\eee
because $\on_{\ppt}\ppt=0$, $\on_\ppt \p_i=\on_{\p_i}\ppt$ and $\la\ppt,\ppt\ra=-1$.
Let
$
g_{ij}=\la \p_i,\p_j\ra.
$
Then
\be\label{e-ppt g}
\ppt g_{ij}=\la \on_{\p_t}\p_i,\p_j\ra+\la \p_i,\on_{\p_t}\p_j\ra=2A_{ij}
\ee
where $A_{ij}=\la \on_{\p_t}\p_i,\p_j\ra$ which is equal to $\la \p_i, \on_{\p_t}\p_j\ra$ by the above. Suppose $a\ge a_1>0$ is the largest value so that $\bX^t$ is an immersion near $p$ for all $|t|<a_1$. Then $A_{ij}$ is the second fundamental form of the immersed hypersurface defined by $\bX^t$. For such $t$
\bee
\begin{split}
\ppt A_{ij}=&\la\on_{\p_t} \on_{\p_t}\p_i,\p_j\ra+\la\on_{\p_t}  \p_i ,\on_{\p_t},\p_j\ra\\
=&\la\on_{\p_t}\on_{  \p_i}\p_t ,\p_j\ra+\la\on_{\p_t}  \p_i ,\on_{\p_t},\p_j\ra\\
=&\oR_{0i0j}+g^{kl}A_{ik}A_{jl}.
\end{split}
\eee
because $\on_{\p_t}\p_t=0$.
Here $\oR_{0i0j}=\oR(\p_0,\p_i,\p_0,\p_j)$. Then by Lemma \ref{l-tilt-equivalent},
\bee
\begin{split}
\p_t|A|^2=&-2g^{ip}g^{lq}g^{kj}A_{pq}A_{ij}A_{kl}+2g^{il}g^{kj}\oR_{0i0l}A_{kj}\\
\le &2|A|^3+C(n) c_0v_0^2|A|\\
\le &C(n)(|A|^2 +c_0 v_0^2)^\frac32.
\end{split}
\eee
 Hence
\bee
( |A|^2+c_1 v_0^2)^{-\frac12}(0)-( |A| ^2+c_0 v_0^2)^{-\frac12}(t)\le C(n)t.
\eee
and
\bee
( |A |^2+c_1 v_0^2)^{-\frac12}(t)\ge (k^2+c_0 v_0^2)^{-\frac12}-C(n)t.
\eee
Hence if $t\le  \frac12 C(n)(a_0^2+c_0 v_0^2)^{-\frac12}$, then
$$
 |A| (t)\le 2(a_0^2+c_1v_0^2)^\frac12=:b_0
$$
By \eqref{e-ppt g}, we conclude that
$$
e^{-2b_0 t}g_0\le g\le e^{2b_0 t} g_0.
$$
From this it is easy to see that $a\ge a_1\ge  \frac12 C(n)(a_0^2+c_0 v_0^2)^{-\frac12}$. This completes the proof of the lemma.

\end{proof}

From now on we further assume:

\begin{enumerate}
  \item [(iv)] $a$ also satisfies $a\le \frac12 C(n)(a_0^2+c_0 v_0^2)^{-\frac12} $.
\end{enumerate}

Hence $\bX: (-a,a)\times M \to N$ is an immersion. We also denote the immersed surface given by $\bX^t$ by $M_t$.
\vskip.2cm

{\it In the following, in order to study short time existence, we will identify $N$ as $(-a,a)\times M $ and identify  $G$ is  with $\bX^*(G)$. This will simplify the exposition.}

\vskip .2cm

To study the prescribed mean curvature flow, let us first  compute the mean curvature of a spacelike surface $S$ given by a function $w$ on $M$: Namely, the surface is the level zero set $\{f(t,\bx)=0\}$  where $f(t,\bx)=t-w(\bx)$, $\bx\in M$. Here the ambient manifold is $(-a,a)\times M$ with metric $G=-dt^2+g( t)$.

\begin{lma}\label{l-H}
The mean curvature  of $S$ with respect to the future pointing unit normal is given by
\bee
H=(1-|\n w|^2)^{-\frac12}H^o+\div^o
\lf(\frac{\nabla w}{(1-|\n w|^2)^{\frac12}}\ri)
+\ppt  ((1-|\n w|^2)^{-\frac12})
\eee
evaluated at $t=w$.
Here $H^o$ is the mean curvature of the reference slice $M_t=\{t\}\times M$,   $\div^o$ is divergence on this slice, $\n$ is the covariant derivative with respect to $g(t)$ on the slice $M_t$.
\end{lma}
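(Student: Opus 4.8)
The plan is to realize $S$ as the zero level set of $f(t,\bx)=t-w(\bx)$ in the ambient space $(-a,a)\times M$ with $G=-dt^2+g(t)$, to identify its future pointing unit normal, and then to use the standard fact that the mean curvature of a level hypersurface equals the $G$-divergence of any \emph{unit} extension of its normal field, restricted to the hypersurface. First I would compute $\on f$. In the coordinates $t=x^0,x^1,\dots,x^n$ one has $G^{00}=-1$, $G^{0i}=0$, $G^{ij}=g^{ij}(t)$, so $\on f=-\ppt-\n w$, where $\n w=g^{ij}(t)w_j\p_i$ is the gradient of $w$ on the slice $M_t$. Hence $\la\on f,\on f\ra=-1+|\n w|^2=-(1-|\n w|^2)<0$ since $S$ is spacelike, while $\la\on f,\ppt\ra=1>0$, so $\on f$ is past directed. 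Therefore the future pointing unit normal is
\bee
\vn=\frac{\ppt+\n w}{(1-|\n w|^2)^{1/2}},
\eee
and one checks that $\la\vn,\vn\ra=-1$ not merely along $S$ but at every point of the chart, when $\n w$ and $|\n w|^2$ are formed with $g(t)$. Thus $\vn$ is a globally unit timelike extension of the normal of $S$, and $H=\div_G\vn$ evaluated at $t=w(\bx)$.

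It then remains to compute $\div_G\vn$. Since $\det G=-\det g(t)$, writing $\mu=\sqrt{\det g(t)}$ we have
\bee
\div_G\vn=\frac1\mu\p_t\big(\mu\,(1-|\n w|^2)^{-1/2}\big)+\frac1\mu\p_i\lf(\mu\,\frac{g^{ij}w_j}{(1-|\n w|^2)^{1/2}}\ri).
\eee
The second term is, by the formula for the divergence of a vector field on the Riemannian slice $(M_t,g(t))$, exactly $\div^o\!\lf(\dfrac{\n w}{(1-|\n w|^2)^{1/2}}\ri)$. In the first term I expand the $t$-derivative by the product rule; using \eqref{e-ppt g}, $\frac1\mu\p_t\mu=\tfrac12 g^{ij}\p_t g_{ij}=g^{ij}A_{ij}=H^o$, so the first term is $(1-|\n w|^2)^{-1/2}H^o+\ppt\big((1-|\n w|^2)^{-1/2}\big)$. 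Adding the two pieces and evaluating at $t=w$ yields the stated identity.

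The computation is routine; the points requiring care rather than any real difficulty are the sign/orientation conventions: that $\on f$ comes out past directed, so $\vn=-\on f/|\on f|$; that spacelikeness ($|\n w|<1$) is precisely what makes $\on f$ timelike and the square roots real; and that $\n w$, $|\n w|^2$, $\div^o$ and $H^o$ all refer to the $t$-dependent slice metric $g(t)$, with the substitution $t=w(\bx)$ performed only at the end. The one genuinely cited fact is that for a unit normal extension $\vn$ one has $\div_G\vn|_S=H$: this follows by taking an adapted orthonormal frame $\vn=e_0,e_1,\dots,e_n$ along $S$, noting $\la\on_{e_0}\vn,e_0\ra=-\tfrac12 e_0\la\vn,\vn\ra=0$ since $\vn$ is unit, so that $\div_G\vn=\sum_{i\ge1}\la\on_{e_i}\vn,e_i\ra=\tr_S A^S=H$, independently of how $\vn$ is extended off $S$.
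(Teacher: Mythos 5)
Your argument is correct and follows the same strategy as the paper: realize $S$ as the zero level set of $f=t-w$, identify $\vn=-\on f/\sqrt{-\la\on f,\on f\ra}=\lambda(\ppt+\n w)$ with $\lambda=(1-|\n w|^2)^{-1/2}$ as a globally unit timelike extension of the future normal, and compute $H=\div_G\vn|_{t=w}$. The only difference is bookkeeping: you expand $\div_G\vn$ via the coordinate formula $\mu^{-1}\p_a(\mu\,\vn^a)$ with $\mu=\sqrt{\det g(t)}$, splitting off the $\p_t$ term, whereas the paper decomposes covariantly as $\div_G(\lambda U)+\div_G(\lambda T)$ using $U\perp T$, $\on_T T=0$, and $\div_G T|_{M_t}=H^o$; both are routine and land on the same expression.
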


\begin{proof} This follows from \cite[p.160]{Bartnik1984}. We sketch the proof.
 Let $f(t,\bx)=t-w(\bx)$. Then $\on f=\on t-\on w=\on t-\n w$. So the mean curvature of the surface $\{f=0\}$ with respect to the future pointing unit normal is given by
 \bee
 H=\div_G \lf(-\frac{\on f}{(1-|\n w|^2)^\frac12}\ri)\bigg|_{t=w}.
 \eee
 Let $T=\ppt, U=\n w, \lambda=(1-|\n w|^2)^{-\frac12}$. Then $H=\div_G(\lambda(T+U))|_{t=w}$. Now
 \bee
 \begin{split}
 \div_G(\lambda(T+U))=&\div_G(\lambda U)+\div_G(\lambda T)\\
 =&\div^o(\lambda U)+\la \on_{T}(\lambda U),T\ra+\lambda\div_G(T)+T(\lambda)\\
 =&\div^o(\lambda U)+\lambda H^o+T(\lambda)
 \end{split}
 \eee
 because $U\perp T$ and $\on_TT=0$. This completes the proof of the lemma.

\end{proof}

Let $x^i$ be local coordinates of $M$ so that $t,x^i$ are local coordinates of $(-a,a)\times M$. Then

\bee
H^o=  g^{ij}A_{ij}^o=\frac12 g^{ij}\p_t g_{ij}
\eee
\bee
T(\lambda)=\lambda^2 \p_t g^{ij} w_iw_j
\eee

\bee
\begin{split}
\div^o\lf(\frac{\n w}{(1-|\n w|^2)^\frac12}\ri)=&\lambda\lf(  g^{ij}+\lambda^2w^iw^j\ri)w_{;ij}\\
\end{split}
\eee
where $w_{;ij}$ is the Hessian of $w$ in $M_t$. Hence we have the following remark.
\begin{rem}\label{r-mean curvature} The mean curvature is of the form
\be\label{e-mean curvature-1}
\begin{split}
H=&(1-|\n w|^2)^{-\frac12}\lf(  g^{ij}+\frac{w^iw^j}{ 1-|\n w|^2}\ri)w_{;ij}+\frac{w^iw^j}{ 1-|\n w|^2} \p_t g^{ij} \\
&+\frac12(1-|\n w|^2)^{-\frac12} g^{ij}\p_t g_{ij}
\end{split}
\ee
evaluated at $t=w$.
\end{rem}

From the remark, in order to study prescribed mean curvature flow, we need to study the equation involving $H$, which is a second order differential operator with coefficients of the form $\phi(t,\bx)|_{t=w}$ where $\phi(\bx,t)$ can be expressed in terms of $g(t,\bx), g^{-1}(t,\bx)$ and their derivatives. So we need  to estimate $g, g^{-1}$ and their derivatives as functions of $t,\bx$.

\vskip .2cm

In the above setting, let us make the following assumptions. Recall that $\Omega=(-a,a)\times M$ in this setting.

\begin{ass}\label{a-1} Assume:
\begin{enumerate}
  \item [(i)] $|||\on \bT|||_{\Omega}\le c$;
  \item [(ii)] $-\la \bX_*(\ppt),\bT\ra\le v_0$ at $t=0$;
  \item [(iii)] $|||\on^k \oR|||_{\Omega}\le c_k$ for all $k\ge 0$;
\item[(iv)] $ |\n^k A |\le a_k$ on $M_0=\{0\}\times M$ for all $k\ge 0$,  where the $\n$ is the covariant derivative of the metric $g_0$ induced by $G$ and the norm is with respect to this metric.
\item[(v)] $a\le \min\{\frac12 C(n)(k_0^2+c_0 v_0^2)^{-\frac12},\frac12 C(n)(a_0^2+c_0 v_0^2)^{-\frac12}\}$ which are given in Lemmas \ref{l-tilt} and \ref{l-foliation-1}.
\end{enumerate}
\end{ass}
Under this assumption, we identify $N$ with $(-a,a)\times M$ with metric $G=-dt^2+g(t)$.

\begin{lma}\label{l-bounded geometry}
Under the Assumption \ref{a-1}, there is $r>0$ such that for all $p\in M_0$, $\xi_p=:\exp_p:D(r)\to B_p(r)$ is a surjective local diffeomorphism where $D(r)=\{\bx\in\R^n|\  |\bx|<r\}$ and $B_p(r)$ is the geodesic ball of $M_0$ with respect to the   metric $g_0$. Moreover, $C^{-1} g_e\le \xi_p^*(g_0)\le Cg_e$ for some constant $C>0$ independent of $p$, where $g_e$ is the Euclidean metric on $D(r)$, and in the standard coordinates $x^i$ of $D(r)$, $\p^\b(\xi_p^*(g_0)_{ij})$ are uniformly bounded in $D(r)$ independent of $p$ for all multi-index $\b$ and $i, j$.

\end{lma}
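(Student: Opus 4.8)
The plan is to produce the map $\xi_p$ as the exponential map of the complete Riemannian metric $g_0$ on $M_0$, and to extract uniform bounds on the pulled-back metric coefficients from the hypotheses of Assumption 2.1, principally (iii) and (iv). First I would observe that since $g_0$ is complete, for every $p\in M_0$ the exponential map $\exp_p\colon T_pM_0\to M_0$ is defined on all of $T_pM_0$; fixing a $g_0$-orthonormal frame at $p$ identifies $T_pM_0$ with $\R^n$ and yields, for any $r>0$, a smooth surjection $\xi_p\colon D(r)\to B_p(r)$ which is a local diffeomorphism on a neighborhood of the origin. The point of the lemma is that one can choose a single $r>0$, \emph{independent of }$p$, such that $\xi_p$ is a local diffeomorphism on all of $D(r)$ and the two-sided bound $C^{-1}g_e\le \xi_p^*(g_0)\le Cg_e$ together with uniform bounds on all $\partial^\beta(\xi_p^*(g_0)_{ij})$ holds on $D(r)$ with constants independent of $p$. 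This is exactly the statement that $(M_0,g_0)$ has \emph{bounded geometry} in the sense of uniformly bounded curvature and all its covariant derivatives, together with a positive injectivity-radius-type lower bound for the purpose of the coordinate chart; here we do not even need injectivity, only that $\xi_p$ be a local diffeomorphism, which needs only a conjugate-point bound.

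The key steps: (1) Bound the intrinsic curvature of $(M_0,g_0)$ and all its covariant derivatives. By the Gauss equation, $\Rm^{g_0}$ is expressed in terms of the restriction of $\oRm$ to $M_0$ and the second fundamental form $A$ of $M_0$; differentiating and using the Codazzi equations, $\n^k\Rm^{g_0}$ is controlled by $\n^j A$ for $j\le k$ and by the ambient $\on^j\oRm$ restricted to $M_0$ for $j\le k$, contracted with tangential projections. Using Assumption 2.1(iii) ($|||\on^k\oRm|||_\Omega\le c_k$) and 2.1(iv) ($|\n^kA|\le a_k$ on $M_0$), and Lemma 2.1 to pass between the reference norm $|||\cdot|||_{G_E}$ and the induced metric norm (the tilt factor being bounded by $v_0$, hence $2v_0$, via Lemma 2.2), one gets $\sup_{M_0}|\n^k\Rm^{g_0}|\le b_k$ for constants $b_k$ independent of $p$. (2) Apply the standard bounded-geometry chart construction: with $|\Rm^{g_0}|\le b_0$, the Rauch comparison theorem gives a lower bound $r_1=r_1(b_0,n)>0$ on the distance to the first conjugate point along any geodesic from any $p$, so $\xi_p$ is a local diffeomorphism on $D(r_1)$ and surjects onto $B_p(r_1)$. (3) On $D(r_1)$, the metric $\xi_p^*(g_0)$ in normal coordinates satisfies a Jacobi equation whose coefficients involve $\Rm^{g_0}$ and its covariant derivatives along radial geodesics; the uniform bounds from step (1), fed into the standard ODE estimates for the Jacobi fields and their derivatives (cf. the usual proof that bounded geometry yields harmonic or geodesic coordinates of a uniform size with $C^k$-controlled metric coefficients), give $C^{-1}g_e\le \xi_p^*(g_0)\le Cg_e$ and uniform bounds on $\partial^\beta(\xi_p^*(g_0)_{ij})$ on $D(r)$ for $r=\tfrac12 r_1$, with $C$ and the bounds depending only on $n$ and the $b_k$.

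The main obstacle — really the only nontrivial point — is step (1): correctly organizing the Gauss–Codazzi machinery so that $\n^k\Rm^{g_0}$ on $M_0$ is genuinely controlled by the \emph{ambient} data $\on^j\oRm$ and the \emph{intrinsic} data $\n^jA$, and then translating the ambient bounds, which are phrased with respect to the reference Riemannian metric $G_E$, into bounds with respect to the induced metric $g_0$. The translation is handled by Lemma 2.1 once we know the tilt factor of $M_0$ is bounded (hypothesis 2.1(ii) and Lemma 2.2); the bookkeeping in Gauss–Codazzi is routine but must be done by induction on $k$, since differentiating the Gauss equation produces terms where ambient curvature is differentiated in directions that are only tangential up to terms involving $A$ (the second fundamental form measures the failure of $M_0$ to be totally geodesic), and each such correction is again a bounded quantity by 2.1(iv). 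Steps (2) and (3) are then the classical bounded-geometry coordinate lemma, which I would invoke rather than reprove, citing the standard references; alternatively one can cite that completeness plus uniform bounds on $\n^k\Rm^{g_0}$ for all $k$ suffice to give coordinates of a uniform size with uniformly $C^\infty$-bounded metric, which is precisely the assertion.
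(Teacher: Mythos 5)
Your proposal is correct and follows essentially the same route as the paper: reduce the claim to uniform bounds on $\Rm^{g_0}$ and all its covariant derivatives via the Gauss (and Codazzi) equations, Assumption 2.1(ii)--(iv), and Lemma \ref{l-tilt-equivalent} to translate $G_E$-norms into $g_0$-norms, then invoke the standard bounded-geometry coordinate result. The paper simply cites Hamilton's compactness paper for steps (2)--(3) rather than spelling out the Rauch/Jacobi argument, but the content is the same; note that on $M_0$ (i.e.\ at $t=0$) the tilt bound $v_0$ from 2.1(ii) already suffices, so the appeal to Lemma 2.2 to get $2v_0$ is unnecessary here.
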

\begin{proof} By \cite{Hamilton1995}, it is sufficient to show that the curvature together with its covariant derivatives of $g_0$ are uniformly bounded. But this follows from (ii), (iii), (iv) in Assumption \ref{a-1}, the Gauss equation and Lemma \ref{l-tilt-equivalent}.

\end{proof}

In general, the H\"older norm of a function in $B_p(r)$ dominates the H\"older norm of its pull back under $\xi_p$. On the other hand, if $\xi_p$ is bijective, then these two norms are equivalent. We will   call $\xi_p:D(r)\to B_p(r)$ a {\it quasi-coordinate neighborhood} and the standard coordinates  $x^i$'s of $\R^n$ will be called {\it quasi-coordinates}.
 Suppose   Assumption \ref{a-1} holds, and let $\xi_p:D(r)\to B_p(r)$, $p\in M$ be quasi-coordinate neighborhoods. By Lemma \ref{l-bounded geometry}, one can define the following H\"older spaces for functions on $M\times[0,S_0]$ for $S_0>0$:
\begin{defn}\label{d-holder space}
Let $f$ be a function on $M\times[0,S_0]$ and $k\ge0$ be an integer. $f$ is said to be in $C^{2k+\sigma,k+\frac\sigma2}(M\times[0,S_0])$ for $0<\sigma<1$ if $\xi_p^*(f) \in C^{2k+\sigma,k+\frac\sigma2}(D(r))$ so that its H\"older  norm $||\xi_p^*(f)||_{2k+\sigma,k+\frac\sigma2}$ in $D(r)\times[0,S_0]$ is uniformly bounded independent of $p$.
\end{defn}
For the definition of H\"older space, see appendix. $C^{2k+\sigma,k+\frac\sigma2}(M\times[0,S])$ is a Banach space. We have the following, see \cite[Appendix]{ChauTam2011}:

\begin{lma}\label{l-equiv-holder} Suppose $\xi_p':D(r')\to B_p(r')$ is another family of quasi local coordinates satisfying the same conditions as in Lemma \ref{l-bounded geometry}. Then the H\"older norms defined by $\{\xi_p|\ p\in M\}$ and the H\"older norms defined by $\{\xi'_p|\ p\in M\}$ are equivalent.
\end{lma}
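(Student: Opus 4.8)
The plan is to reduce the statement to a uniform estimate on the ``transition maps'' relating the two families of quasi-coordinate neighbourhoods, and then to conclude by the standard composition estimates for the parabolic H\"older norms together with a covering argument on the \emph{fixed} domains $D(r)$, $D(r')$. By symmetry it is enough to prove one inequality, say that for each fixed $q\in M$ the $C^{2k+\sigma,k+\frac\sigma2}$-norm of $\xi_q'^*(f)$ on $D(r')\times[0,S_0]$ is bounded by $C\sup_{p\in M}||\xi_p^*(f)||$, where $||\cdot||$ denotes the $C^{2k+\sigma,k+\frac\sigma2}$-norm on $D(r)\times[0,S_0]$ and $C$ is independent of $q$ and $f$. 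The supremum-norm part is immediate, since $\bigcup_{p}B_p(r)=M_0$; so only the local quantities — the spatial derivatives up to order $2k$, their $\sigma$-H\"older moduli, and the H\"older moduli in time — require work.

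First I would fix $x_1\in D(r')$, put $p=\xi_q'(x_1)$ so that $\xi_p(0)=p=\xi_q'(x_1)$, and construct on a ball $D(x_1,\eta_0)$ with $\eta_0>0$ \emph{uniform} a smooth map $\Phi=\Phi_{x_1,q}\colon D(x_1,\eta_0)\to D(r)$ with $\xi_p\circ\Phi=\xi_q'$. The only subtlety is that Lemma \ref{l-bounded geometry} provides $\xi_p$ merely as a \emph{local} diffeomorphism, so I would obtain $\Phi$ by lifting through $\xi_p$, starting from the point $0$, the $\xi_q'$-images of the straight segments issuing from $x_1$. Since $\xi_q'^*g_0\le Cg_e$ and $\xi_p^*g_0\ge C^{-1}g_e$, such a segment has $\xi_q'$-image of $g_0$-length $\le C^{1/2}\eta_0$ and its $\xi_p$-lift has $g_e$-length $\le C\eta_0$, so for $\eta_0$ small (depending only on the structural constants) the lift never reaches $\partial D(r)$ and $\Phi$ is well defined, smooth, and valued in $D(r)$ on all of $D(x_1,\eta_0)$. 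Because $\xi_p\circ\Phi=\xi_q'$, the map $\Phi$ is a local isometry from $(D(x_1,\eta_0),\xi_q'^*g_0)$ into $(D(r),\xi_p^*g_0)$, two metrics each uniformly equivalent to $g_e$ with uniformly bounded derivatives of all orders. The metric comparisons give $|D\Phi|\le C$ and $|(D\Phi)^{-1}|\le C$; and, an isometry carrying geodesics to geodesics, $\Phi$ satisfies the second-order system $\partial_a\partial_b\Phi^c+\widetilde\Gamma^c_{de}(\Phi)\partial_a\Phi^d\partial_b\Phi^e=\Gamma^f_{ab}\partial_f\Phi^c$, in which $\widetilde\Gamma$, $\Gamma$ are the Christoffel symbols of $\xi_p^*g_0$, $\xi_q'^*g_0$ and hence, with all their derivatives, uniformly bounded; bootstrapping this system then yields uniform bounds $|\partial^m\Phi|\le C_m$ for every $m$.

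With $\Phi$ in hand the remainder is routine. Writing $\widehat\Phi(x,t)=(\Phi(x),t)$ one has $\xi_q'^*(f)=(\xi_p^*(f))\circ\widehat\Phi$ on $D(x_1,\eta_0)\times[0,S_0]$, so the usual composition estimate for $C^{2k+\sigma,k+\frac\sigma2}$ under a time-independent change of spatial variable of uniformly bounded $C^{2k+1}$-norm (the time regularity being untouched) gives that the $C^{2k+\sigma,k+\frac\sigma2}$-norm of $\xi_q'^*(f)$ on $D(x_1,\eta_0)\times[0,S_0]$ is $\le C\sup_p||\xi_p^*(f)||$, with $C$ uniform in $q$ and $x_1$. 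Covering $D(r')$ by a number of balls $D(x_1,\eta_0/2)$ that depends only on $r'$ and $\eta_0$, and absorbing the contribution of pairs of points lying in no common member of the cover (whose mutual distance is bounded below) into the already-controlled supremum norm, one obtains the desired bound on the $C^{2k+\sigma,k+\frac\sigma2}$-norm of $\xi_q'^*(f)$ on $D(r')\times[0,S_0]$. Exchanging the two families yields the equivalence. This is essentially the argument in the appendix of \cite{ChauTam2011}.

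The step I expect to be the main obstacle is the construction of the transition maps $\Phi$ on a \emph{uniform} scale together with their \emph{uniform} $C^m$-bounds: this is exactly where the bounded-geometry hypotheses underlying Lemma \ref{l-bounded geometry} (ultimately Assumption \ref{a-1}) are used, and one must be careful not to invoke an injectivity-radius lower bound for $\xi_p$, which is not available — the length estimates above are precisely what makes that unnecessary. Everything afterwards, namely the composition estimate for parabolic H\"older spaces and the patching on a fixed bounded domain, is standard.
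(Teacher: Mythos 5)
Your proposal is correct, and the paper does not give an independent proof of this lemma --- it simply defers to the appendix of \cite{ChauTam2011} --- so there is no internal argument to compare against. Your mechanism (construct transition maps $\Phi$ at a uniform scale $\eta_0$ by lifting $\xi_q'$-images of short segments through the local diffeomorphism $\xi_p$, use the isometry identity $\Phi^*(\xi_p^*g_0)=\xi_q'{}^*g_0$ to get $|D\Phi|,|(D\Phi)^{-1}|\le C$ and then bootstrap the Christoffel-symbol transformation law for uniform $C^m$-bounds, and finally apply the composition estimate for parabolic H\"older norms together with a covering of $D(r')$ in which far-apart pairs are absorbed into the supremum norm) is the right one, and it is in the spirit of the cited reference. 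The single point you correctly identify as delicate --- that $\xi_p$ is only a \emph{local} diffeomorphism, so one must lift along paths and control their Euclidean length via $C^{-1}g_e\le\xi_p^*g_0\le Cg_e$ rather than invoke a global inverse of $\xi_p$ --- is indeed where the bounded-geometry conclusions of Lemma \ref{l-bounded geometry} are essential, and your length estimate is exactly what makes the lift stay inside $D(r)$ for $\eta_0$ small depending only on the structural constants. Two small housekeeping remarks: one should work on $D(x_1,\eta_0)\cap D(r')$ when $x_1$ is close to $\partial D(r')$ (this affects nothing), and the identity $\sup_{D(r')}|\xi_q'{}^*f|\le\sup_p\sup_{D(r)}|\xi_p^*f|$ uses the surjectivity of each $\xi_p$ onto $B_p(r)$, which is part of the hypotheses. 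Neither affects the argument.
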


Since $\xi_p$ is a local diffeomorphism, we may consider the standard coordinates of $D(r)$ as local coordinates of points in $B_p(r)$. In the following lemma, since the computations are local, for simplicity, we still denote $\xi_p^*(g(t))$ by $g(t)$.

\begin{lma}\label{l-foliation-2}
In the setting of Lemma \ref{l-foliation-1}. Under Assumption \ref{a-1},  in each quasi-coordinate neighborhood $\xi_p$, any order of  derivatives of $g_{ij}(t)$ with respect to quasi-coordinates and with respect to $t$ are uniformly bounded independent of $p$ and $|t|<a$. Moreover, $C^{-1}g_e\le \xi_p^*(g(t))\le Cg_e$ for some constant $C>0$ independent of $p, t$. Hence any order of derivatives of $g^{ij}(t)$ with respect to quasi-coordinates and with respect to $t$ are uniformly bounded independent of $p$ and $|t|<a$.
\end{lma}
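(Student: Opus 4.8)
The plan is to deduce Lemma~\ref{l-foliation-2} from three ingredients: the uniform equivalence of metrics that is essentially already in hand, an \emph{intrinsic} a priori bound on all covariant derivatives of the second fundamental forms of the slices $M_t$, and a Gronwall-type induction that converts those tensorial bounds into bounds on coordinate derivatives in the fixed quasi-coordinate charts $\xi_p$. The immediate facts are these. Lemma~\ref{l-foliation-1} gives $e^{-2b_0|t|}g_0\le g(t)\le e^{2b_0|t|}g_0$, and since $|t|<a$ this yields $C^{-1}g_0\le g(t)\le Cg_0$; combined with $C^{-1}g_e\le\xi_p^*(g_0)\le Cg_e$ from Lemma~\ref{l-bounded geometry} we obtain $C'^{-1}g_e\le\xi_p^*(g(t))\le C'g_e$ with $C'$ independent of $p$ and $|t|<a$, which is the asserted equivalence and also reduces the claim about derivatives of $g^{ij}$ to the claim about derivatives of $g_{ij}$. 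Two evolution equations will drive everything: $\p_t g_{ij}=2A_{ij}$, equation~\eqref{e-ppt g}, and the Riccati equation $\p_t A_{ij}=\oR_{0i0j}+g^{kl}A_{ik}A_{jl}$ derived in the proof of Lemma~\ref{l-foliation-1}. Finally, since $-\la\p_t,\bT\ra\le 2v_0$ for $|t|<a$ by Lemma~\ref{l-tilt} (using Assumption~\ref{a-1}(i),(ii),(v)), Lemma~\ref{l-tilt-equivalent} lets me pass freely between $|||\cdot|||_{G_E}$ and the norm taken with respect to the slice metric $g(t)$; in particular the coordinate frame $\p_\alpha$ and every $g(t)$-orthonormal tangent frame have $G_E$-norm bounded by a universal constant.

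The key step is the intrinsic estimate: for every $k$, $\sup_{|t|<a}\sup_{M_t}|\n^kA|_{g(t)}<\infty$, uniformly in $p$. I prove this by induction on $k$, the case $k=0$ being $|A(t)|\le b_0$ from Lemma~\ref{l-foliation-1}. For the inductive step I differentiate the Riccati equation $k$ times covariantly; using $\p_t\Gamma(g(t))^k_{ij}=g^{kl}(\n_iA_{jl}+\n_jA_{il}-\n_lA_{ij})$ to commute $\p_t$ past $\n$, one obtains a schematic equation $\p_t(\n^kA)=\n^k(\oRm|_{M_t})+\sum_{a+b\le k}\n^{a}A\ast\n^{b}A$ in which every term of top order $k$ has the form (a quantity bounded by the induction hypothesis and the curvature hypothesis) $\ast\,\n^kA$, while $\n^k(\oRm|_{M_t})$ is controlled by $|||\on^{\le k}\oRm|||_\Omega$ (Assumption~\ref{a-1}(iii), converted via the previous paragraph) together with lower-order products of $\n^jA$ coming from the Gauss--Weingarten relations. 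Hence $\p_t|\n^kA|^2\le C_k(|\n^kA|^2+1)$ with $C_k$ depending only on $n$, the bounds $c_0,\dots,c_k$, $v_0$ and the lower-order bounds on $|\n^jA|$; since $|\n^kA|(0)$ is bounded by Assumption~\ref{a-1}(iv) and $|t|<a$, Gronwall closes the induction. (As a byproduct, all $|\n^k\,\overline{\mathrm{Rm}}|_{g(t)}$ of the induced curvatures are bounded, via the Gauss equation.)

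To reach the coordinate bounds I fix $\xi_p$ and induct on the spatial order $N$, the statement at stage $N$ being that $\p_x^\beta\p_t^m g_{ij}$ is bounded independently of $p$ and $|t|<a$ for all $|\beta|\le N$ and $m\ge0$. For the case $m=0$, $|\beta|=N$ I integrate to get $\p_x^\beta g_{ij}(t,\cdot)=\p_x^\beta g_{ij}(0,\cdot)+2\int_0^t\p_x^\beta A_{ij}(s,\cdot)\,ds$ and then expand $\p_x^\beta A_{ij}$ by repeatedly rewriting coordinate derivatives as covariant derivatives with respect to $g(s)$: this displays $\p_x^\beta A_{ij}$ as a universal polynomial in the coordinate components of $\n^qA$, $q\le N$ (bounded by the intrinsic estimate, since $g(s)\simeq g_e$), and of $\p_x^\gamma\Gamma(g(s))$, $|\gamma|\le N-1$; the terms with $|\gamma|\le N-2$ are bounded by the induction hypothesis, and the terms with $|\gamma|=N-1$ involve $\p_x^{N}g(s)$ only linearly with coefficients bounded by the induction hypothesis and the $A$-bounds. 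Thus $Y(t):=\sup\{|\p_x^\beta g_{ij}(t,\cdot)| : |\beta|=N\}$ satisfies $Y(t)\le Y(0)+\int_0^t(C_N+C_NY(s))\,ds$, with $Y(0)$ bounded by Lemma~\ref{l-bounded geometry}, so Gronwall gives the bound. Once $\p_x^\beta g_{ij}$ is controlled for all $|\beta|\le N$, the mixed derivatives $\p_t^m\p_x^\beta g_{ij}$ follow by differentiating $\p_t g_{ij}=2A_{ij}$ and the Riccati equation in $t$ and $x$: the spatial Christoffels $\Gamma(g(t))$ and the coordinate components $\oR_{\alpha\beta\gamma\delta}$ of the ambient curvature and its covariant derivatives then have bounded coordinate derivatives up to the needed order — the former because $\p_x^{\le N}g$ is now bounded, the latter because the ambient Christoffels in $(t,x)$-coordinates are $\Gamma^0_{ij}=A_{ij}$, $\Gamma^k_{0i}=g^{kl}A_{il}$, $\Gamma^k_{ij}=\Gamma(g(t))^k_{ij}$ (the rest vanishing), all with bounded coordinate derivatives, so the conversion of the $|||\on^j\oRm|||_\Omega$-bounds into bounds on $\p_x^\gamma\oR_{\alpha\beta\gamma\delta}$ carries through. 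This finishes the induction, and with it the proof of the derivative bounds for $g_{ij}$ and hence for $g^{ij}$.

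The step I expect to be the main obstacle is this last one: all estimates have to be made in a \emph{fixed} chart — the normal quasi-coordinates built at $t=0$ — rather than in coordinates adapted to each $g(t)$, so one cannot simply quote the standard ``bounded curvature implies $C^\infty$ coordinate control,'' and the coupled induction has to be run by hand, the crucial structural point being that at each order the highest-order unknown ($\p_x^{N}g$, equivalently $\n^{N}A$ or $\p_x^{N-1}\Gamma(g(t))$) enters the relevant integral identity only linearly and with coefficients already controlled, which is exactly what lets Gronwall close. The intrinsic estimate of the second paragraph is what enables this, since $|\n^kA|^2_{g(t)}$, being a tensorially built scalar, has a $t$-evolution with no hidden high-order coordinate derivatives and can therefore be bounded first, independently of the coordinate-derivative bounds.
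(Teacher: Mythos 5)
Your proof is correct and follows essentially the same route as the paper: establish the metric equivalence, derive the intrinsic bounds on $|\n^kA|$ via the Riccati equation for $A$, the commutation of $\p_t$ with $\n$ through $\p_t\Gamma$, and Gronwall, then convert to coordinate-derivative bounds, treating spatial derivatives first and mixed $t$-derivatives afterward. The only bookkeeping difference is that the paper integrates to bound $\n^k(\Gamma(t)-\Gamma(0))$ and deduces bounds on $\p^\b A_{ij}$, whereas you integrate $\p_x^\beta g_{ij}$ directly and close with a linear Gronwall inequality in $Y(t)$; these are two renderings of the same structural argument.
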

\begin{proof} As mentioned above, we will not distinguish between $\xi_p^*(g(t))$ and $g(t)$. By Lemmas \ref{l-bounded geometry} and \ref{l-foliation-1}, it is easy to see that
\be\label{e-equivalent}
C^{-1}g_e\le \xi_p^*(g(t))\le Cg_e
\ee
in $D(r)$ for some $C$ independent of $p, t$. Hence $g_{ij}(t), g^{ij}(t)$ are uniformly bounded, where $g_{ij}$ are the components of $g$ with respect to the Euclidean coordinates $x^i$.
 By  Lemma \ref{l-foliation-1} and its proof we have
\be\label{e-pt-g}
\p_t g_{ij}=2A_{ij}
\ee
and $|A(t)|$ is uniformly bounded. Hence  $A_{ij}$ and  $|\p_t g_{ij}| $ are uniformly bounded.

 Since all order of derivatives of $g_{ij}(0)$ are uniformly bounded, by \eqref{e-pt-g}, to prove the lemma it is sufficient to prove that $\p_t^k\p^\b A_{ij}$ are uniformly bounded for all $k\ge 0$ and for all multi-index $\b=(\b_1,\b_2,\cdots\b_n)$ with $\b_i\ge0$. Here
$\p^\b=\p^{\b_1}_1\cdots\p^{\b_n}_n$ and $\p_i=\frac{\p}{\p x^i}$. Let $|\b|=\sum_{i=1}^n\a_i$.

We first prove that $\p^\b A_{ij}$ are uniformly bounded for all $\b$. Let $\ol\Gamma(t)=\Gamma(t)-\Gamma(0)$ where $\Gamma(t)$ are the Christoffel symbols of $g(t)$ with respect to the coordinates $x^i$.
If $B_{ijk...}$ is a smooth  tensor on $(-a,a)\times M$ which are tangential to $\{t\}\times M $ for all $t$, for tangential $\p_u$:
\bee
\begin{split}
(\p_t\nabla_u B)_{ijk...}=&\p_t\p_u B_{ijk...}-\p_t( \Gamma_{ui}^pB_{pjk...})-\p_t( \Gamma_{uj}^p B_{ipk...})-\p_t( \Gamma_{uk}^p B_{ijp...})-\dots\\
=&\nabla_u(\p_t B)_{ijk...}+\p_t\ol\Gamma*B.
\end{split}
\eee
because $\p_t\ol\Gamma=\p_t\Gamma$. The number of terms depends only on $n$ and the degree of $B$. We write this simply as:

\bee
(\p_t\n B)=\n\p_t B+\p_t\ol\Gamma*B.
\eee
So
\be\label{e-pptB}
\begin{split}
 \p_t(\n^kB)=&\n\p_t (\n^{k-1}B)+\p_t\Gamma*\n^{k-1}B\\
 =&\n(\n\p_t(\n^{k-2}B)+\p_t\Gamma*\n^{k-2}B)+\p_t\Gamma*\n^{k-1}B\\
 =&\n^2(\p_t((\n^{k-2}B))+\n\p_t\Gamma*\n^{k-2}B+\p_t\Gamma*\n^{k-1}B\\
 =&\cdots\\
 =&\n^k\p_tB+\sum_{i=0}^{k-1}\n^i\p_t\ol\Gamma*\n^{k-i-1}B.
 \end{split}
\ee
Now
\be\label{e-pptA}
\ppt A_{ij}=\oR_{0i0j}+g^{kl}A_{ik}A_{jl}
\ee
and
\be\label{e-pptGamma}
\begin{split}
\ppt\ol\Gamma_{ij}^k=\ppt\Gamma_{ij}^k=g^{kl}(A_{lj;i}+A_{il;j}-A_{ij;l})
\end{split}
\ee
where $;$ is the covariant derivative with respect to $g(t)$ and $0$ is the index corresponding to $\ppt$. So $\oR_{0i0j}=\oR(\ppt, \frac{\p}{\p x^i},\ppt,\frac{\p}{\p x^j})$.
Let $B_{ij}=\oR_{0i0j}$. By Lemma \ref{l-foliation-1} and the choice of $a$, the tilt factor of $M\times\{t\}$ for $|t|<a$ is bounded by $2v_0$. Let $c_k$ be such that $|||\on^k\oRm|||\le c_k$. By \eqref{e-equivalent} and Lemma \ref{l-tilt-equivalent} we have  $| B|\le C(n,v_0,c_0)$. Suppose $| \n^kA |\le b_k$. Since
\bee
\begin{split}
\n_u\oR_{0i0j}=&\on_u\oR_{0i0j}+A_u^k(\oR_{ki0j}+\oR_{0ikj})
\end{split}
\eee
we have $||\n B||\le C(n,v_0,c_0,c_1, b_0)$. Here the norm is taken with respect to $g(t)$. In general, one can show that
\bee
|\n^kB| \le C(n,v_0,c_0,\cdots,c_k; b_0,\cdots,b_{k-1}).
\eee
By \eqref{e-pptGamma},
$$
\n^k\p_t\ol\Gamma= g^{-1}*\n^{k+1}A.
$$
Hence by \eqref{e-pptB} and \eqref{e-pptA}, we have
\bee
\begin{split}
\p_t\n^kA=&\n^k\p_tA+\sum_{i=0}^{k-1}\n^i\p_t\ol\Gamma*\n^{k-i-1}A\\
=&\n^k(B+g^{-1}*A*A)+\sum_{i=0}^{k-1}\n^i\p_t\ol\Gamma*\n^{k-i-1}A.
\end{split}
\eee
\bee
\begin{split}
 |\p_t\n^kA|\le& C(n,v_0,c_0,c_1,\cdots,c_k;b_0,b_1,\cdots, b_{k-1})(1+||\n^kA||)\\
\end{split}
\eee
Hence
\bee
\p_t|\n^kA|^2\le C(1+||\n^kA||^2)
\eee
for some $C=C(n,v_0,c_0,c_1,\cdots,c_k;b_0,b_1,\cdots, b_{k-1})$. Since  $ |\n^kA| (0)$ is bounded, we conclude that $  |\n^kA| $ is bounded   provided $ |\n^pA |$ is bounded for $0\le p\le k-1$. Since $ |A|$ is
bounded, inductively, we have $|\n^kA|$ is bounded for all $k$. By \eqref{e-pptGamma},   and that $ |\n^kA|$ is bounded for all $k$, we have $\n^k\p_t\ol\Gamma$ is bounded. Using \eqref{e-pptB} and  the fact that $\ol\Gamma=0$ at $t=0$, we can  argue as before to show that $\n^k\ol\Gamma$ is bounded for all $k$. From these we conclude that $\p^\b A_{ij}$ are uniformly bounded.


Next, we want to prove that $\p^k_t\p^\b A_{ij}$ are uniformly bounded. Suppose $\p_t^k\p^\b A_{ij}$ are uniformly bounded for all $ \b$, for all $0\le k\le \ell$. Then $\p_t^{k+1}\p^\b g_{ij}$ are uniformly bounded  by \eqref{e-pt-g} and hence $\p_t^{k+1}\p^\b g^{ij}$ for all $\b$ and for all $0\le k\le \ell$. By \eqref{e-pptA}

\be\label{e-induction}
\p_t^{k+1}\p^\b A_{ij}=\p_t^k\p^\b( \oR_{0i0j}+g^{pq}A_{pi}A_{qj})=\p_t^k\p^\b  \oR_{0i0j}+\p_t^k\p^\a (g^{pq}A_{pi}A_{qj}).
\ee
Let $k=\ell$ in the above. Then   second term is uniformly bounded. To estimate the first term, by \eqref{e-induction}, $\p_t^k\p^\b \oR_{0i0j}$ is uniformly bounded for all $\a$ and $0\le k\le \ell-1$.

let $\p_0=\ppt$ and $\p_i=\frac{\p}{\p x^i}$ for $i=1,\cdots, n$.   Since
\bee
\left\{
  \begin{array}{ll}
    \on_{\p_i}\p_j=\Gamma_{ij}^k\p_k +A_{ij}\ppt\\
\on_{\ppt}\p_i=\on_{\p_i}\ppt=g^{kl}A_{il}\p_k\\
\on_{\ppt}\ppt=0,
  \end{array}
\right.
\eee
\bee
\begin{split}
 \on\oRm=&\p\oRm+\oRm*f(A,g,g^{-1},\p g).
\end{split}
\eee
All terms are evaluated using the basis $\p_0, \p_1,\cdots,\p_n$. Here and in the following $f$ is a polynomial  of its arguments which may change from line to line and $\p$ means $\p_\gamma$, $0\le \gamma\le n$. Differentiate  the above relation, we have
\bee
\begin{split}
\on^2\oRm=&\p\on\oR-\on\oRm*f(A,g,\p g,g^{-1})\\
=&\p^2\oRm+(\p \oRm)*f(A,g,\p q,g^{-1})+\oRm*f(A,\p A, g,\p g,\p^2g,g^{-1}).
\end{split}
\eee
Continue in this way, we have:
\bee
\on^k\oRm=\p^k\oRm+\sum_{i=1}^k\p^{k-i}\oRm *f_i
\eee
where $f_i$ denote polynomials in $\p^\b A$ with $|\b|=s$  for $0\le s\le i-1$, $\p^s g$ for $0\le s\le i$ and $g^{-1}$.
First note that $\on^k\oRm$ are uniformly bounded for all $k$ by the facts  that $|||\on^k \oRm|||_{\Omega}$ are uniformly bounded, the tilt factors of $M_t$ are uniformly bounded in space time, and by Lemma \ref{l-tilt-equivalent} and \eqref{e-equivalent}. Since $\p^k_t\p^\a A, \p^{k+1}\p^\a g, \p^k\p^\a\oRm $ are uniformly bounded for $0\le k\le \ell$ and for all $\a$, $\p^k_t\p^\a\oRm$ are uniformly bounded for $0\le k\le \ell-1$, and $g^{-1}$ is uniformly bounded, we conclude that $\p_t^\ell\p^\b\oRm$ are uniformly bounded for all $\a$. This implies that $\p_t^{\ell}\p^\a A$ are uniformly bounded for all $\b$. By induction, the lemma follows.

\end{proof}

\subsection{Proof of short time existence}\label{ss-L-shortime}

Let $N,G$, $\tau$, $M, \bX_0$ be as in Theorem \ref{t-shorttime Lorentz}. Then by Lemmas \ref{l-foliation-1} and \ref{l-foliation-2}, we may assume $N=(-a,a)\times M$ with metric $G=-dt^2+g(t)$ with $a>0$ small enough.
With this setting,   $\bF: M\times[0,S_0]\to N$, $S_0>0$, is a prescribed mean curvature flow starting from $\bX_0:M\to N$ if it satisfies \eqref{e-mcf-1}
such that $\bF(\cdot,s)$ is a spacelike immersion, with mean curvature vector $\bH=\vn H  $ where $H$ is the mean curvature,  $\vn$ is the future timelike unit normal,  and $\cH$ is a   function on $N$. The following fact is well-known.

\begin{lma}\label{l-reduction-1}
Let $\bF$ be a solution of the prescribed mean curvature flow \eqref{e-mcf-1}. Suppose in the setting of Assumption \ref{a-1} there is a smooth family of diffeomorphisms $\phi(p,s)$, $0\le s\le S_0$  of $M$ with $\phi(\cdot,0)$ being the identity map, and a smooth function $w(p,s)$ on $M\times[0,S_0]$ so that in this setting
\bee
\bF(p,s)=\bX( w(\phi(p,s),s), \phi(p,s))=(w(\phi(p,s),s), \phi(p,s))
\eee
with $|w|<a$, where $a$ is as in Assumption \ref{a-1} and $\bX$ is as in \eqref{e-X}. Then $w$ satisfies:
\be\label{e-w-1}
\frac{\p w}{\p s}=-\la \ppt,\vn\ra^{-1}(H-\cH)
\ee
in $M\times[0,S_0]$ and $w(p,0)=0$.
\end{lma}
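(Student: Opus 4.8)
The plan is to obtain \eqref{e-w-1} by differentiating the given representation
$\bF(p,s)=\bX(w(\phi(p,s),s),\phi(p,s))=(w(\phi(p,s),s),\phi(p,s))$ in $s$, splitting the resulting velocity vector into its parts tangent and normal to $M_s$, and matching with the flow equation \eqref{e-mcf-1}. This is the standard remark that a flow whose velocity is purely normal is, up to reparametrization by the diffeomorphisms $\phi(\cdot,s)$, a graphical evolution; the only thing to watch is the appearance of the factor $-\la\ppt,\vn\ra^{-1}$.

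First I would compute, in the coordinates $(t,x^1,\dots,x^n)$ on $N=(-a,a)\times M$ with $\ppt$ and $\p_i=\p/\p x^i$ the coordinate fields. Writing $w_i=\p w/\p x^i$ and $\p_s w=\p w/\p s$, all evaluated at $(\phi(p,s),s)$, and abbreviating $\dot\phi^i=\p_s\phi^i$, the chain rule gives
\[
\pps\bF=\big(w_i\dot\phi^i+\p_s w\big)\ppt+\dot\phi^i\,\p_i=(\p_s w)\,\ppt+\dot\phi^i\big(w_i\ppt+\p_i\big).
\]
The key point is that $\{\,w_i\ppt+\p_i : 1\le i\le n\,\}$ spans $T_{\bF(p,s)}M_s$: since $\phi(\cdot,s)$ is a diffeomorphism of $M$, the image $M_s=\bF^s(M)$ coincides with the graph $\{(w(q,s),q):q\in M\}$, whose tangent space at $(w(q,s),q)$ is spanned by the pushforwards $w_i\ppt+\p_i$ of the $\p_i$ under the graph map. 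Hence $\dot\phi^i\big(w_i\ppt+\p_i\big)$ is tangent to $M_s$, and the entire normal component of $\pps\bF$ comes from the single term $(\p_s w)\,\ppt$.

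Next I would project onto the normal line $\R\vn$. Since $\vn$ is the future unit timelike normal, $\la\vn,\vn\ra=-1$, so the normal component of any vector $Y$ is $-\la Y,\vn\ra\vn$; in particular the normal part of $\pps\bF$ equals $-(\p_s w)\la\ppt,\vn\ra\,\vn$, and because $\ppt$ and $\vn$ are both future unit timelike one has $-\la\ppt,\vn\ra\ge 1>0$, so this coefficient never vanishes. On the other hand \eqref{e-mcf-1} gives $\pps\bF=(H-\cH)\vn$, which is already normal; equating normal components and cancelling $\vn$ yields $(\p_s w)(\phi(p,s),s)=-\la\ppt,\vn\ra^{-1}(H-\cH)$ with the right-hand side evaluated at $\bF(p,s)$. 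As $p$ ranges over $M$ and $\phi(\cdot,s)$ is onto, this is precisely the statement that $\p_s w=-\la\ppt,\vn\ra^{-1}(H-\cH)$ holds on $M\times[0,S_0]$, i.e.\ \eqref{e-w-1}, while $w(p,0)=0$ follows at once from $\bF(p,0)=\bX_0(p)=\bX(0,p)=(0,p)$ together with $\phi(\cdot,0)=\mathrm{id}$. I do not expect a genuine obstacle here; the main care needed is bookkeeping — keeping straight which quantities are evaluated at $(\phi(p,s),s)$ and which at $\bF(p,s)$ — and the Lorentzian sign in the normal projection, with the standing hypothesis $|w|<a$ guaranteeing that $\bF$ stays inside $N$ and that $M_s$ remains a spacelike graph so that $-\la\ppt,\vn\ra$ is finite and positive.
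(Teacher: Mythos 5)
Your argument is correct and follows essentially the same route as the paper: differentiate the graphical representation in $s$, observe that the $\dot\phi^i$ term is tangential because $\{w_i\ppt+\p_i\}$ spans $T_{\bF(p,s)}M_s$, and project onto $\vn$ to match with \eqref{e-mcf-1}, using $-\la\ppt,\vn\ra\ge1$ to invert. The only cosmetic difference is that the paper uses separate coordinates $y^j$ on the image of $\phi(\cdot,s)$, but the decomposition and the normal projection step are the same.
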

\begin{proof} Since $|w|<a$ so that $\bX( w(\phi(p,s),s), \phi(p,s))$ is well-defined. In local coordinates $x^i$ of $M$ in the domain and local coordinates $y^i$ of the image of $\phi$,
\bee
\begin{split}
\p_s\bF=&\frac{\p y^j}{\p s}\frac{\p}{\p y^j}+(\frac{\p w}{\p y^j}\frac{\p y^j}{\p s}+w_s)\ppt
\\
=&\frac{\p y^j}{\p s}\lf(\frac{\p}{\p y^j}+ \frac{\p w}{\p y^j}\ppt\ri)+w_s\ppt.
\end{split}
\eee
Here $w_s=\frac{\p w}{\p s}$ as a function of in the  form $w(p,s)$. For fixed $s$, $\phi(p,s)$ is a diffeomorphism of $M$ and so $\bF_*({\frac{\p}{\p x^i}})$ is tangential for all $i$. Since
\bee
\bF_*({\frac{\p}{\p x^i}})=\frac{\p y^k}{\p x^i}\lf(\frac{\p}{\p y^k}+\frac{\p w}{\p y^k} \ppt\ri),
\eee
and $\by=\phi(\bx,s)$ is a diffeomorphism, we have
\bee
\lf(\p_s\bF\ri)^\perp= (w_s\ppt)^\perp
\eee
where $\perp$ is the projection to the normal of immersion given by $\bF^s=\bF(\cdot,s)$.

So the first term in the above is tangential. Hence
\bee
w_s\la \ppt,\vn\ra=\la\p_s\bF,\vn\ra=-(H-\cH).
\eee
From this the result follows because $-\la \vn,\ppt\ra\ge1$.
\end{proof}

The following is also well-known.

\begin{lma}\label{l-reduction-2} Under Assumption \ref{a-1}, suppose $w$ is a smooth function on $M\times[0,S_0]$ such that $|w|<a$, $w(p,0)=0$, and $w\in  C^{2+\sigma,1+\frac\sigma2}(M\times[0,S_0])$ such that $1-|\n w|^2\ge \lambda^{-2}$ uniformly for some constant $\lambda>0$. Here $\n $ is the gradient on the slice $\{t\}\times M $.  Let $\wt\bF(p,s)=\bX(p, w(p,s))$. Then the $\wt\bF(\cdot,s)$ is an immersion for all $s$. Suppose $w$ satisfies \eqref{e-w-1} with mean curvature vector $\bH=\vn H$ with $\cH$ being bounded.
 Then one can find  a smooth family of diffeomorphisms $\phi(p,s)$, $0\le s\le S_0$  of $M$ with $\phi(\cdot,0)$ being the identity map such that
$$\bF(p,s)=\wt\bF(\phi(p,s), w(\phi(p,s),s))$$
is a smooth solution to \eqref{e-mcf-1}.
\end{lma}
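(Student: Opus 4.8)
The plan is to remove the tangential part of the velocity of $\wt\bF$ by precomposing with a suitable flow of time-dependent vector fields on $M$; this is the standard device for turning a ``graphical'' solution of the flow into a genuine geometric one, the only delicate point being completeness on the noncompact $M$.

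First I would check that $\wt\bF^s(\cdot):=\wt\bF(\cdot,s)$ is a spacelike immersion for each $s$, with uniform control. In the identification $N=(-a,a)\times M$, $G=-dt^2+g(t)$ of Assumption \ref{a-1} we have $\wt\bF(p,s)=(w(p,s),p)$, so $\wt\bF^s_*(\p_i)=\p_i+w_i\ppt$ and the induced metric has components $g_{ij}(w)-w_iw_j$; since $\la\nabla w,\xi\ra_{g(w)}^2\le|\nabla w|^2_{g(w)}|\xi|^2_{g(w)}$ one gets $\lambda^{-2}g(w)\le (\wt\bF^s)^*G\le g(w)$, which by Lemma \ref{l-foliation-2} is uniformly equivalent to the fixed complete metric $g_0$. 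Thus each $\wt\bF^s$ is a complete spacelike immersion, and the induced metrics are mutually uniformly equivalent for $s\in[0,S_0]$.

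Next I would decompose the velocity with respect to $\wt\bF^s$. Since $\pps\wt\bF=w_s\ppt$, the computation in the proof of Lemma \ref{l-reduction-1} gives $(w_s\ppt)^\perp=-\la w_s\ppt,\vn\ra\vn$, which by \eqref{e-w-1} equals $(H-\cH)\vn$, where $H$ is the mean curvature of $\wt\bF^s$ and $\vn$ its future unit normal. Hence
\be
\pps\wt\bF=(H-\cH)\vn+V,\qquad V(\cdot,s):=(w_s\ppt)^\top,
\ee
and $V(\cdot,s)$ is a smooth vector field tangent to $\wt\bF^s$; as $\wt\bF^s$ is an immersion, $W_s:=(\wt\bF^s_*)^{-1}\bigl(V(\cdot,s)\bigr)$ is a well-defined vector field on $M$, smooth in $(p,s)$. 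The key quantitative point is that $W_s$ is uniformly bounded: $w_s=\pps w$ is uniformly bounded because $w\in C^{2+\sigma,1+\frac\sigma2}(M\times[0,S_0])$, and the tangential projection is controlled by $\nabla w$ and $g(w)$, uniformly bounded by $1-|\nabla w|^2\ge\lambda^{-2}$ and Lemma \ref{l-foliation-2}; together with the uniform equivalence of the induced metric with $g_0$ this gives $\sup_{M\times[0,S_0]}|W_s|_{g_0}<\infty$.

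Finally, let $\phi(\cdot,s)$ be the flow (from $s=0$) of the time-dependent vector field $-W_s$, so $\phi(\cdot,0)=\mathrm{id}_M$. Because $g_0$ is complete and $W_s$ is uniformly bounded, an integral curve satisfies $\mathrm{dist}_{g_0}(\phi(p,s),p)\le s\,\sup|W|$, so by Hopf--Rinow it stays in a compact set and cannot escape in finite time; hence the flow is defined on all of $M\times[0,S_0]$, and running it backwards supplies an inverse, so each $\phi(\cdot,s)$ is a diffeomorphism of $M$. Put $\bF(p,s)=\wt\bF(\phi(p,s),s)$. Then $\bF$ is smooth, $\bF(\cdot,0)=\bX_0$, each $\bF^s$ is a spacelike immersion (composition of the immersion $\wt\bF^s$ with a diffeomorphism), and, using $\pps\phi=-W_s\circ\phi$ and $\wt\bF^s_*W_s=V$,
\be
\pps\bF=\bigl(\pps\wt\bF\bigr)\circ\phi+\wt\bF^s_*\bigl(\pps\phi\bigr)=\bigl[(H-\cH)\vn+V\bigr]\circ\phi-V\circ\phi=(H-\cH)\vn
\ee
evaluated at $\bF(p,s)$, so $\bF$ solves \eqref{e-mcf-1}. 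I expect the main obstacle to be precisely this uniform bound on $W_s$ and the completeness argument it feeds: on noncompact $M$ one must genuinely invoke the Hölder bounds on $w$ and the bounded geometry of the slices (Lemma \ref{l-foliation-2}) to conclude that $\phi(\cdot,s)$ is a global diffeomorphism rather than merely a local one.
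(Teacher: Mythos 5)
Your proof is correct and follows essentially the same route as the paper: decompose $\pps\wt\bF = w_s\ppt$ into tangential and normal parts, identify the normal part with $(H-\cH)\vn$ via \eqref{e-w-1}, pull the tangential part back to a time-dependent vector field on $M$ whose uniform boundedness (from the $C^{2+\sigma,1+\frac\sigma2}$ control on $w$, $1-|\n w|^2\ge\lambda^{-2}$, boundedness of $\cH$, and Lemma \ref{l-foliation-2}) guarantees a globally defined flow on the complete $(M,g_0)$, and then reparametrize by that flow. Your explicit invocation of Hopf--Rinow for the escape argument is a slightly more verbose version of the same completeness point the paper makes; there is no substantive difference.
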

\begin{proof}  Denote $\wt\bF(\cdot,s)$ by $\wt\bF^s(\cdot)$. Let $y^i$ be local coordinates of $M$ so that $y^i, t$ are local coordinates of $N=(-a,a)\times M$. We may assume that $y^i$ are quasi local coordinates. Then $\wt\bF$ is of the form
$$
\wt\bF(\by,s)=(\by, w(\by,s)).
$$
 Let
$e_i=:\wt\bF^s_*(\frac{\p}{\p y^i})=\frac{\p}{\p y^i}+w_i\ppt$ where $w_i=\frac{\p w}{\p y^i}$. So $\la e_i,e_j\ra=g_{ij}(\by, t)|_{t=w}-w_iw_j$. Let $\xi_i, 1\le i\le n$ be real numbers and let $\xi^i=g^{ik}\xi_k$. Then
\bee
\la e_i,e_j\ra \xi^i\xi^j=\xi^i\xi_i-\lf(\sum_{i=1}^n w_i\xi^i\ri)^2.
\eee
So
\bee
\xi^i\xi_i\ge \la e_i,e_j\ra \xi^i\xi^j\ge \xi^i\xi_i(1-|\n w|^2)\ge \lambda^{-2}\xi^i\xi_i.
\eee
Hence the graph $\wt\bF^s$ is spacelike with induced metric uniformly equivalent to $g(\by,t)|_{t=w}$ at $\wt\bF^s(\by)$, and hence is equivalent to $g(\by,0)$ by Lemma \ref{l-foliation-1}. From this one can see that there is a smooth family of vector fields
$V(\by,s)$ on $M$ $s\in [0,S_0]$ such that $\wt\bF^s_*(V)=\lf(\p_s\wt\bF\ri)^\top$ which is the tangential component of $ \p_s\wt\bF $ to the graph of $\wt\bF^s$. To be more precise, let $\vn$ be the future timelike unit normal of the graph of $\wt\bF^s$,
\bee
\p_s\wt\bF=  w_s\ppt= (w_s\ppt)^\top-  w_s \la\vn,\ppt\ra\vn=( w_s\ppt)^\top+(H-\cH)\vn
\eee
because $w$ satisfies \eqref{e-w-1}. Now $( w_s\ppt)^T(\by,s)=v^i(\by,s)e_i$ where $e_i$ are tangent to the graph of $\wt\bF^s$ as before. Then $V=v^i\frac{\p}{\p y^i}$. It is easy to see that $V$ is smooth in $\by, s$. Let $\phi(p,s)$ be the integral curve of $-V$ so that $\phi(\cdot,0)$ is the identity map. We claim that $|V|_{g(0)}$ is uniformly bounded. If the claim is true, then $\phi$ can be defined on $M\times [0,S_0]$. In this case, let
\bee
\bF(p,s)=\wt\bF(q, w(q,s))|_{q=\phi(p,s)}.
\eee
Then $\bF$ is smooth, $\bF(p,0)=\bX_0(p)$ because $w(p,0)=0$. Moreover,
\bee
\begin{split}
\p_s\bF=&\wt\bF^s_*(\p_s\phi(p,s))+(\p_s\wt\bF)^\top+(H-\cH)\un\\
=&(H-\cH)\un.
\end{split}
\eee
The lemma follows provided we can prove the claim that $||V||_{g(0)}$ is bounded. To prove the claim, let $e_i$ be as before,
\bee
\begin{split}
\la(\p_s\wt\bF)^\top,e_i\ra=&w_i w_s
\end{split}
\eee
which is uniformly bounded because $w\in C^{2+\sigma,1+\frac\sigma2}(M\times[0,S_0])$, $H$ is given by \eqref{e-mean curvature-1}, $1-|\n w|^2\ge\lambda^{-2}$, and $\cH$ is uniformly bounded. Here we have also used Lemma \ref{l-foliation-2}. Since the metric $\la e_i,e_j\ra$ is uniformly equivalent to $g_{ij}(0)$, this implies the claim is true.

\end{proof}

Hence in order to find short time existence of the prescribed mean curvature flow \eqref{e-mcf-1}, it is sufficient to find $w$ satisfying the conditions in the above lemma on $M\times[0,S_0]$ for some $S_0>0$.
We will use the inverse theorem on $C^1$ maps on Banach spaces. We proceed as in \cite{ChauTam2011} (see also \cite{ChauChenHe}) using the argument outlined in \cite{Hamilton1982}. Consider the Banach space
\bee
 C^{2+\sigma,1+\frac\sigma2}_0(M\times[0,S_0])=\{f\in  C^{2+\sigma,1+\frac\sigma2}(M\times[0,S_0])|\ \ f(\cdot,0)=0\}
\eee
for some $0<\sigma<1$ and $S_0$ will be chosen later. If there is no confusion, we will drop $M\times[0,S_0]$ in the definition of H\"older spaces.

Let $\mathcal{B}$ be the open set in $ C_0^{2+\sigma,1+\frac\sigma2}(M\times[0,S_0])$
consisting functions $f$ such that

\be\label{e-Banach}
  \sup_{M\times[0,S_0]} |f|<a \  \mathrm{and}\
   \inf_{M\times[0,S_0]} (1-|\n f|^2)> \frac12
\ee
where $|\n f|^2$ is the norm of the gradient with respect to $g(t)$. More precisely in local coordinates $x^i$ of $M$,
\bee
|\n f|^2(\bx,s) =f^if_i
\eee
where $f_i=\p_i f=\frac{\p}{\p x^i}f$ and $f^i(\bx,s)=g^{ij}(\bx,t)|_{t=f(\bx,s)} f_i(\bx,s)$.

We want to define a suitable map $\mathcal{F}$ from $\mathcal{B}$ to $C^{\sigma,\frac\sigma2}(M\times[0,S_0]$. Let $w\in \mathcal{B}$ consider the graph of $w$ in $N$ which is identified with $(-a,a)\times M$ with metric $  G=-dt^2+g(t)$. The graph of $w(\cdot,s)$ is given by $(\bx, w(\bx,s))\in (-a,a)\times M$, $\bx\in M$. This is well defined because $|w|<a$. The graph is spacelike because $1-|\n w|^2>\frac12$. For fixed $s$, the future pointing unit normal is given by
$$
\vn(w)=\dps{\frac{\ppt+\n w}{(1-|\n w|^2)^{\frac12}}}.
$$
The mean curvature vector is $\bH=H\un$. Let $\cH$ be a smooth function in $(-a,a)\times M$. Define
\be\label{e-F-1}
\mathcal{F}(w)=\frac{\p w}{\p s}+\la \ppt,\vn(w)\ra^{-1}(H-\cH)
=\frac{\p w}{\p s}-(1-|\n w|^2)^\frac12(H-\cH)
\ee
for $w\in \mathcal{B}$. In the following, it is more convenient to use the quasi local coordinates to express the map. By \eqref{e-mean curvature-1}, we have
\be\label{e-F-2}
\begin{split}
\mathcal{F}(w)=&\frac{\p w}{\p s}-  \lf(  g^{ij}+\frac{ w^iw^j}{1-|\n w|^2}\ri)w_{;ij}-\frac{w_iw_j}{(1-|\n w|^2)^\frac12}  \p_t g^{ij} \\
&-\frac12  g^{ij}\p_t g_{ij}+ (1-|\n w|^2)^\frac12 \cH\\
=&\frac{\p w}{\p s}-  \lf(  g^{ij}+\frac{ w^iw^j}{1-|\n w|^2}\ri)\lf(w_{ij}-\Gamma_{ij}^kw_k\ri) -\frac{w_iw_j}{(1-|\n w|^2)^\frac12}  \p_t g^{ij} \\
&-\frac12  g^{ij}\p_t g_{ij}+ (1-|\n w|^2)^\frac12 \cH.
\end{split}
\ee
For any function $f(t,\bx)$  in the above it is understood the function will be evaluated at $t=w$. Here $\Gamma_{ij}^k$ is the Christoffel symbols of $g$ with respect to the quasi local coordinates.

\begin{lma}\label{l-F-range} Suppose $|||\on^k\cH|||_{\Omega}$ is uniformly bounded for $k=0,1$, then $\mathcal{F}(w)\in C^{\sigma,\frac\sigma2}$ for $w\in \mathcal{B}$.
\end{lma}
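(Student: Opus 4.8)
The claim is that $\mathcal{F}(w) \in C^{\sigma,\frac{\sigma}{2}}(M\times[0,S_0])$ for every $w \in \mathcal{B}$, under the assumption that $|||\on^k \cH|||_\Omega < \infty$ for $k = 0, 1$. The plan is to read off the structure of $\mathcal{F}(w)$ from the explicit formula \eqref{e-F-2} and check term by term that each ingredient lies in $C^{\sigma,\frac{\sigma}{2}}$, using that this H\"older space (as defined via the quasi-coordinate charts $\xi_p$ in Definition \ref{d-holder space}) is a Banach algebra closed under composition with smooth functions with bounded derivatives. First I would note that since $w \in C^{2+\sigma,1+\frac{\sigma}{2}}_0 \subset \mathcal{B}$, we have $\p_s w \in C^{\sigma,\frac{\sigma}{2}}$, $w_i = \p_i w \in C^{1+\sigma,\frac{1+\sigma}{2}} \subset C^{\sigma,\frac{\sigma}{2}}$, and $w_{ij} = \p_i\p_j w \in C^{\sigma,\frac{\sigma}{2}}$, all with norms controlled by $\|w\|_{2+\sigma,1+\frac{\sigma}{2}}$, uniformly over the charts $\xi_p$ by Lemma \ref{l-equiv-holder}.

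The next step is to handle the coefficient functions, which are all of the form $\phi(t,\bx)|_{t=w(\bx,s)}$, where $\phi$ is a polynomial expression in $g_{ij}(t,\bx)$, $g^{ij}(t,\bx)$, $\p_t g_{ij}$ and $\Gamma_{ij}^k$ (for the terms $g^{ij}$, $\p_t g^{ij}$, $\p_t g_{ij}$, $\Gamma^k_{ij}$ appearing in \eqref{e-F-2}), together with $\cH$. By Lemma \ref{l-foliation-2}, all spatial quasi-coordinate derivatives and all $t$-derivatives of $g_{ij}$, $g^{ij}$, $\p_t g_{ij}$ are uniformly bounded on $D(r)\times(-a,a)$ independent of $p$; the same holds for the Christoffel symbols $\Gamma_{ij}^k = \tfrac12 g^{kl}(\p_i g_{jl} + \p_j g_{il} - \p_l g_{ij})$ since these are smooth combinations of quantities with bounded derivatives. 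Hence each such $\phi(t,\bx)$ is smooth on $D(r)\times(-a,a)$ with all derivatives bounded uniformly in $p$. Composing with the map $(\bx,s)\mapsto (w(\bx,s),\bx)$, which has $C^{\sigma,\frac{\sigma}{2}}$ components — indeed $\bx \mapsto \bx$ is smooth and $w \in C^{2+\sigma,1+\frac{\sigma}{2}} \subset C^{\sigma,\frac{\sigma}{2}}$ — and using the chain rule / composition estimate for H\"older spaces (a smooth function of a $C^{\sigma,\frac{\sigma}{2}}$ function is again $C^{\sigma,\frac{\sigma}{2}}$, with norm controlled by the $C^1$ bound on the outer function and the $C^{\sigma,\frac{\sigma}{2}}$ norm of the inner one), we conclude that each coefficient $\phi(w(\bx,s),\bx)$ lies in $C^{\sigma,\frac{\sigma}{2}}$ with uniform bound. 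For $\cH$, here is where the hypothesis $|||\on^k\cH|||_\Omega < \infty$ for $k=0,1$ enters: it gives that $\cH$, regarded as a function of $(t,\bx)$ on $\Omega$, together with its first derivatives, is bounded; by Lemma \ref{l-tilt-equivalent} and \eqref{e-equivalent} this translates to a bound on $\cH$ and its first quasi-coordinate and $t$-derivatives on $D(r)\times(-a,a)$, hence $\cH(w(\bx,s),\bx) \in C^{\sigma,\frac{\sigma}{2}}$ as well (one only needs $C^1$ control of the outer function for a $C^\sigma$-composition estimate, which is why $k=0,1$ suffices).

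Finally, I would assemble the pieces: the quantity $1 - |\n w|^2 = 1 - g^{ij}(w,\bx)w_iw_j$ is a product and sum of $C^{\sigma,\frac{\sigma}{2}}$ functions, hence in $C^{\sigma,\frac{\sigma}{2}}$; on $\mathcal{B}$ it is bounded below by $\tfrac12$, so $(1-|\n w|^2)^{\pm 1/2}$ and $(1-|\n w|^2)^{-1}$ are smooth functions of it with bounded derivatives on the relevant range, hence again $C^{\sigma,\frac{\sigma}{2}}$. The quantity $g^{ij} + \frac{w^iw^j}{1-|\n w|^2}$ is then $C^{\sigma,\frac{\sigma}{2}}$, and each summand in \eqref{e-F-2} is a product of finitely many $C^{\sigma,\frac{\sigma}{2}}$ functions; since $C^{\sigma,\frac{\sigma}{2}}(D(r)\times[0,S_0])$ is a Banach algebra and the norms are uniform over $p$ by Lemma \ref{l-equiv-holder}, the sum $\mathcal{F}(w)$ lies in $C^{\sigma,\frac{\sigma}{2}}(M\times[0,S_0])$, as claimed. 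The only mild subtlety — the ``main obstacle'' such as it is — is bookkeeping the composition-with-$t=w$ step cleanly: one must verify that the estimates in Lemma \ref{l-foliation-2} are exactly what is needed to control $\phi(w(\bx,s),\bx)$ in the space-time H\"older norm (in particular that boundedness of $\p_t\phi$ is what converts the $C^{\sigma,\frac{\sigma}{2}}$ regularity of $s\mapsto w(\cdot,s)$ into $C^{\sigma/2}$-in-time regularity of the coefficient), but this is routine given the uniform smoothness already established.
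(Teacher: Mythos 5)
Your argument is essentially the paper's own proof: the paper also works term by term in the quasi-coordinate charts, uses Lemma \ref{l-foliation-2} to get uniform bounds on the $t$- and $x^i$-derivatives of $g_{ij}$, $g^{ij}$, $\p_t g_{ij}$, $\Gamma_{ij}^k$, composes these at $t=w(\bx,s)$ using $w\in C^{2+\sigma,1+\frac\sigma2}$, handles $(1-|\n w|^2)^c$ via the lower bound $\tfrac12$, and uses the $k=0,1$ bound on $|||\on^k\cH|||$ to get $\cH(w(\bx,s),\bx)\in C^{\sigma,\frac\sigma2}$. The only cosmetic difference is that you explicitly invoke the Banach-algebra structure of $C^{\sigma,\frac\sigma2}$ and Lemma \ref{l-tilt-equivalent} for the $\cH$ step, whereas the paper cites Lemmas \ref{l-foliation-1}--\ref{l-foliation-2}; this is the same computation.
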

\begin{proof} Let $\xi_p: D(r)\to B_p(r)$ be quasi local coordinate neighborhood. By Lemma \ref{l-foliation-2}  all the derivatives of the components of  $\xi_p^*(g(t))$ and its inverse with respect to $t$ and the quasi local coordinates $x^i$ are uniformly bounded independent of $p$ and for $|t|<a$. Moreover, the $C^{2+\sigma,1+ \frac\sigma2}$ norm of $\xi^*(w)$ in $D(r)\times[0,S_0]$ is uniformly bounded independent of $p$. In order to prove the lemma, it is sufficient to prove that the $C^{ \sigma,  \frac\sigma2}$ norm in $D(r)\times[0,S_0]$ of each term of the last expression in \eqref{e-F-2} is uniformly bounded, where $w$ is understood to be $\xi^*_p(w)$ and $g$ is understood to be $\xi^*_p(g)$ etc. By the assumption on $w$, the $C^{2\sigma,\frac\sigma2}$ norms of $w_i, w_{ij}$ are uniformly bounded. On the other hand, since the first derivative of $g_{ij}$ in $x^i, t$ are uniformly bounded and since $w_i, w_s$ are uniformly bounded, we conclude that the $C^{\sigma,\frac\sigma2}$ norms of $g_{ij}( w(\bx,s),\bx)$ are uniformly bounded. Similarly,  the $C^{\sigma,\frac\sigma2}$ norms of $g^{ij}, \p_t g_{ij}, \p_tg^{ij}, \Gamma_{ij}^k, g$ are uniformly bounded evaluated at $t=w(\bx,s)$.  By the assumption on $w$ and   Lemma \ref{l-foliation-2}, $w^i=g^{ik}w_k$ has uniformly bounded $C^{ \sigma,\frac\sigma2}$ norm.
Since $1-|\n w|^2>\frac12$, one can see that for any $c$,  the  $C^{2\sigma,\frac\sigma2}$ norm of $(1-|\n w|^2)^c=(1-w^iw_i)^c$ are uniformly bounded. Since $|||\on^k\cH|||_{\Omega}$ is uniformly bounded for $k=0,1$, $\cH(w(\bx,s),\bx)$ is in $C^{\sigma,\frac\sigma2}$ by Lemmas \ref{l-foliation-1} and \ref{l-foliation-2}. From these, we conclude that the lemma is true.
\end{proof}

Next we want to compute the differential of $\cF$. Let $v(\bx,s)$ be a function on $M\times[0,S_0]$ with $|v|<a$. For any family of  tensors $T(t,\bx)$ in $\bx$ for  $(t,\bx)\in (-a,a)\times M$, denote
 \bee
\,^vT(\bx,s)=:T(v(\bx,s),\bx).
\eee
For examples:

\bee
\left\{
  \begin{array}{ll}
    v^i =\,^vg^{ij} v_j \\
v_{;ij} =v_{ij} -\,^v\Gamma_{ij}^k v_k. \\
  \end{array}
\right.
\eee
We will compute $D\cF$ at $v$ in quasi local coordinates $x^i, s$ in $M\times[0,S_0]$.

\begin{lma}\label{l-DF} In addition to Assumption \ref{a-1}, suppose $|||\on^k\cH|||_{\Omega}$ are uniformly bounded for all $k\ge 0$. Then in the quasi local coordinates, for any $v\in \mathcal{B}$, the differential $D\cF_v$ of $\cF$ at $v$ is given by:
\be\label{e-DF}
D\mathcal{F}_v(\phi)=\phi_s -\,^va^{ij}\,\phi_{ij}+\,^vb^k\,\phi_k+\,^vc\,\phi
\ee
where
\be\label{e-abc}
\left\{
  \begin{array}{ll}
    \,^v a^{ij}=&\,^v g^{ij} +(1-|\n v|^2)^{-1}v^iv^j; \\
 \,^v b^k
=&\dps{- \frac{2 (    \,^v g^{ki}v^j (1-|\n v|^2)+v^iv^j  v^k) (v_{ij}-\,^v\Gamma_{ij}^mv_m)}{(1-|\n v|^2)^2}+\lf(  \,^vg^{ij}+\frac{ v^iv^j}{1-|\n v|^2}\ri) \,^v\Gamma_{ij}^k } \\
&2\dps{-\bigg[\frac{  \,^v\p_tg^{ij}v_iv_j v^k+(1-|\n v|^2)\,^v\p_tg^{ik}v_i+ v^k((1-|\n v|^2) \,^v\cH}{(1-|\n v|^2)^{ \frac32}}\bigg]}\\
 \,^v c=&\dps{-\bigg[   \ ^v\p_tg^{ij}+\frac{2  \ ^v \p_tg^{ik}v_k }{1-|\n v|^2}+\frac{v^iv^j  \ ^v \p_tg^{lm}v_lv_m }{(1-|\n v|^2)^2}\bigg](v_{ij}-\,^v\Gamma_{ij}^kv_k)}\\
&\dps{+\lf(  \,^vg^{ij}+\frac{ v^iv^j}{1-|\n v|^2}\ri)  \ ^v\p_t\Gamma_{ij}^kv_k } \dps{-\bigg[\frac{  \ ^v \p_tg^{lm}v_lv_m \,^v\p_tg^{ij}}{(1-|\n v|^2)^{ \frac32}} + \frac{ \,^v \p_t^2 g^{ij}}{(1-|\n v|^2)^\frac12} \bigg]v_iv_j}
\\
&\dps{-\frac12 \lf(\,^v\p_t g^{ij}\,^v\p_tg_{ij}+\,^vg^{ij}\,^v\p_t^2g_{ij}\ri)-\frac{  \ ^v \p_tg^{lm}v_lv_m\,^v\cH}{(1-|\n v|^2)^\frac12}
+ (1-|\n v|^2)^\frac12 \,^v\p_t\cH}
 \\
  \end{array}
\right.
\ee
Moreover, $\cF$ is $C^1$ in $\mathcal{B}$.
\end{lma}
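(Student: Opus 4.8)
The plan is to compute the G\^ateaux derivative of $\cF$ by the chain rule, check that it equals the bounded linear operator displayed in \eqref{e-DF}--\eqref{e-abc}, and then upgrade to Fr\'echet differentiability of class $C^1$ by proving that $v\mapsto D\cF_v$ is norm-continuous from $\mathcal{B}$ into $L\bigl(C^{2+\sigma,1+\frac\sigma2}_0,C^{\sigma,\frac\sigma2}\bigr)$. For the first step, fix $v\in\mathcal{B}$ and $\phi\in C^{2+\sigma,1+\frac\sigma2}_0(M\times[0,S_0])$; since $\mathcal{B}$ is open, $v+\ve\phi\in\mathcal{B}$ for small $|\ve|$. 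Working in a quasi-coordinate chart, every coefficient occurring in \eqref{e-F-2} has the form $\,^{v+\ve\phi}T=T(v+\ve\phi,\bx)$ with $T$ one of $g^{ij},g_{ij},\p_tg^{ij},\p_tg_{ij},\Gamma_{ij}^k,\cH$, so $\frac{d}{d\ve}\big|_{\ve=0}\,^{v+\ve\phi}T=\,^v(\p_tT)\,\phi$, while $\p_i,\p_{ij},\p_s$ commute with $\p_\ve$. One also records $\frac{d}{d\ve}\big|_{\ve=0}(v+\ve\phi)^i=\,^vg^{ik}\phi_k+\,^v(\p_tg^{ik})\,v_k\,\phi$ and $\frac{d}{d\ve}\big|_{\ve=0}\bigl(1-|\n(v+\ve\phi)|^2\bigr)=-2v^i\phi_i-\,^v(\p_tg^{ij})v_iv_j\,\phi$. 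Substituting into \eqref{e-F-2} and collecting the coefficients of $\phi_{ij}$, $\phi_k$ and $\phi$ (the $\phi_s$ coefficient being $1$) reproduces exactly \eqref{e-DF}--\eqref{e-abc}; this is a lengthy but purely mechanical bookkeeping, which I will not write out term by term. Linearity of the resulting expression in $\phi$ is manifest.

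For the boundedness of $D\cF_v$ into $C^{\sigma,\frac\sigma2}$, note that Lemma \ref{l-foliation-2} gives, in each quasi-coordinate chart, uniform (in $p$ and in $|t|<a$) bounds on all $(t,\bx)$-derivatives of $g_{ij}$ and $g^{ij}$, together with $C^{-1}g_e\le\xi_p^*(g(t))\le Cg_e$. Combined with $|||\on^k\cH|||_\Omega<\infty$ and the constraint $1-|\n v|^2>\frac12$ that defines $\mathcal{B}$, the argument already used to prove Lemma \ref{l-F-range} shows that each frozen quantity $\,^vT$, each $v^i=\,^vg^{ik}v_k$, and each power $(1-|\n v|^2)^{-p/2}$ lies in $C^{\sigma,\frac\sigma2}$ with norm controlled by $\|v\|_{2+\sigma,1+\frac\sigma2}$ --- using that $C^{\sigma,\frac\sigma2}$ is a Banach algebra, that composing a uniformly $C^1$ family $T(t,\cdot)$ with a $C^{\sigma,\frac\sigma2}$ function of $\bx$ stays in $C^{\sigma,\frac\sigma2}$, and that $s\mapsto s^{-p/2}$ is smooth away from $s=0$. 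Hence $\,^va^{ij},\,^vb^k,\,^vc\in C^{\sigma,\frac\sigma2}$, and since $\phi_{ij},\phi_k,\phi_s\in C^{\sigma,\frac\sigma2}$ for $\phi\in C^{2+\sigma,1+\frac\sigma2}_0$, the right-hand side of \eqref{e-DF} belongs to $C^{\sigma,\frac\sigma2}$ with $\|D\cF_v\phi\|_{\sigma,\frac\sigma2}\le C(v)\|\phi\|_{2+\sigma,1+\frac\sigma2}$; in particular $\,^va^{ij}=\,^vg^{ij}+(1-|\n v|^2)^{-1}v^iv^j$ is positive definite, so $D\cF_v$ is a linear parabolic operator, a fact needed later.

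Finally, to pass from G\^ateaux to Fr\'echet $C^1$ it suffices, by the standard criterion for maps between Banach spaces, to show $\mathcal{B}\ni v\mapsto D\cF_v\in L\bigl(C^{2+\sigma,1+\frac\sigma2}_0,C^{\sigma,\frac\sigma2}\bigr)$ is continuous, and by \eqref{e-DF} this reduces to the continuity of $v\mapsto\,^va^{ij},\,^vb^k,\,^vc$ into $C^{\sigma,\frac\sigma2}$. Each of these is a finite sum of products of the basic maps $v\mapsto v_{ij}$, $v\mapsto v_k$ (bounded linear, hence continuous), $v\mapsto\,^vT$, and $v\mapsto(1-|\n v|^2)^{-p/2}$; the map $v\mapsto\,^vT$ is Lipschitz on $\mathcal{B}$ into $C^{\sigma,\frac\sigma2}$ via $\,^{v_1}T-\,^{v_2}T=\bigl(\int_0^1\,^{v_2+r(v_1-v_2)}(\p_tT)\,dr\bigr)(v_1-v_2)$ and the uniform bounds of Lemma \ref{l-foliation-2}, while $v\mapsto(1-|\n v|^2)^{-p/2}$ is continuous because $1-|\n v|^2\ge\frac12$ keeps the nonlinearity $s\mapsto s^{-p/2}$ in the region where it is smooth; products of continuous $C^{\sigma,\frac\sigma2}$-valued maps are continuous since $C^{\sigma,\frac\sigma2}$ is a Banach algebra. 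This gives $\cF\in C^1(\mathcal{B})$. The one genuinely delicate point is this last step: establishing that composition $v\mapsto\,^vT$ and the nonlinearity $s\mapsto s^{-p/2}$ are continuous --- not merely bounded --- into the parabolic H\"older space $C^{\sigma,\frac\sigma2}$, and that this holds uniformly across the quasi-coordinate charts $\xi_p$; everything there rests on Lemma \ref{l-foliation-2} and the product/composition estimates for $C^{\sigma,\frac\sigma2}$ recalled in the appendix, whereas the derivation of the explicit formulas \eqref{e-abc} in the first step, though tedious, is routine.
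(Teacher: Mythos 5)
Your proposal is correct and follows essentially the same route as the paper: compute the G\^ateaux derivative by differentiating each term of \eqref{e-F-2} in $\ve$, verify boundedness of $D\cF_v$ using Lemma \ref{l-foliation-2} and the constraint $1-|\n v|^2>\frac12$, and then establish $C^1$ regularity through continuity (in fact local Lipschitz continuity) of $v\mapsto D\cF_v$. The only cosmetic difference is that you invoke the standard G\^ateaux-to-Fr\'echet criterion by name, whereas the paper re-proves it directly via the integral identity $\cF(v+h)-\cF(v)-D\cF_v(h)=\int_0^1\bigl(D\cF_{v+\ve h}(h)-D\cF_v(h)\bigr)\,d\ve$ and the Lipschitz bound $\|D\cF_u-D\cF_v\|\le C\|u-v\|$; both are the same argument in substance.
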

\begin{proof}
Fix $v\in  \mathcal{B}$. Let $\phi\in C_0^{ 2+\a, 1+\frac\a2 }$. Here and below the H\"oder spaces are defined on   $M\times[0,S_0]$. Then for $|\e|$ small, $v+\e\phi\in C_0^{2+\a, 1+\frac\a2} $ and $\mathcal{F}(v+\e\phi)$ is in $C^{\a,\frac\a2} $. In fact, for $|\e|$ small enough, $v_\e=:v+\e\phi\in \mathcal{B}$.
Then at $\e=0$,
\bee
\left\{
  \begin{array}{ll}
\dps{\frac{\p}{\p \e}(v+\e\phi)_s=\phi_s}\\
   \dps{ \frac{\p}{\p \e}\lf( \,^{v_\e}g^{ij}\ri)=\phi  \ ^v\p_tg^{ij}}  \\
\dps{ \frac{\p}{\p \e}\lf( \,^{v_\e}\p_tg^{ij}\ri)=\phi \ ^v \p_t^2g^{ij}} \\
\dps{\frac{\p}{\p \e}v_\e^i= \phi \ ^v \p_tg^{ik}v_k +\phi_k  \ ^v g^{ki} }\\
\dps{\frac{\p}{\p \e}\,^{v_\e}  \mathcal{H} = \phi \ ^v\p_t\mathcal{H}}\\
\dps{\frac{\p}{\p \e}\,^v\Gamma_{ij}^k=\phi\ ^v\p_t\Gamma_{ij}^k.}
  \end{array}
\right.
\eee
In the following $f'$ means $\frac{\p f}{\p\e}|_{\e=0}$. We have:
\bee
\begin{split}
&\lf[\lf(  g^{ij}+\frac{ v^iv^j}{1-|\n v|^2}\ri)\lf(v_{ij}-\Gamma_{ij}^kv_k\ri)\ri]'\\
=&\bigg[\phi  \ ^v\p_tg^{ij}+\frac{2(\phi \ ^v \p_tg^{ik}v_k +\phi_k  \ ^v g^{ki}v^j)}{1-|\n v|^2}+\frac{v^iv^j(\phi \ ^v \p_tg^{lm}v_lv_m+2\phi_k v^k) }{(1-|\n v|^2)^2}\bigg](v_{ij}-\,^v\Gamma_{ij}^kv_k)\\
&+\lf(  \,^vg^{ij}+\frac{ v^iv^j}{1-|\n v|^2}\ri)\lf(\phi_{ij}-\,^v\Gamma_{ij}^k\phi_k-\phi\ ^v\p_t\Gamma_{ij}^kv_k\ri)
\end{split}
\eee
\bee
\begin{split}
\lf(\frac1{(1-|\n v|^2)^\frac12}  \p_t g^{ij} v_iv_j\ri)'=&\frac{\phi \ ^v \p_tg^{lm}v_lv_m+2\phi_k v^k}{(1-|\n v|^2)^{ \frac32}}\,^v\p_tg^{ij}v_iv_j\\
&+ \frac1{(1-|\n v|^2)^\frac12} \lf(\phi\,^v \p_t^2 g^{ij} v_iv_j
+2\,^v\p_tg^{ik}v_i\phi_k\ri)
\end{split}
\eee
\bee
\begin{split}
(\frac12  g^{ij}\p_t g_{ij})'=&\frac12\phi\lf(\,^v\p_t g^{ij}\,^v\p_tg_{ij}+\,^vg^{ij}\,^v\p_t^2g_{ij}\ri)
\end{split}
\eee
\bee
\begin{split}
\lf( (1-|\n v|^2)^\frac12 \cH\ri)'=&-\frac{\phi \ ^v \p_tg^{lm}v_lv_m+2\phi_k v^k}{(1-|\n v|^2)^\frac12}\,^v\cH
+\phi(1-|\n v|^2)^\frac12 \,^v\p_t\cH.
\end{split}
\eee
Let $D\cF_v(\phi)=\frac{\p}{\p\e}\cF(v+\e\phi)|_{\e=0}$. Then
\bee
D\cF_v(\phi)=\phi_s -\,^va^{ij}\phi_{ij}+\,^vb^k\phi_k+\,^vc\phi,
\eee
where
  $\,^va^{ij}, \,^vb^k, \,^vc$ are as in \eqref{e-abc}.
 Since $|||\on^k\cH|||_{\Omega}$ is uniformly bounded for all $k$,  together with Lemma \ref{l-foliation-2}, all the derivatives of $g_{ij}$ and $\cH$ with respect to $t$ and the quasi local coordinates $x^i$ are uniformly bounded. Here we identify $g$, $\cH$ etc to their pull back to $D(r)$. From this
 and the fact that $v\in C^{2+\sigma,1+\frac\sigma2}_0$, $D\cF_v$  is a bounded linear operator from  $C^{2+\sigma,1+\frac\sigma2}_0$ to $C^{\sigma,\frac\sigma2}$. Moreover, by the structure of $\,^va^{ij}, \,^vb^k, \, ^vc$, we have for $u, v\in \mathcal{B}$, the $C^{\sigma,\frac\sigma2}(D(r)\times[0,S_0])$ norms of
   $ \,^ua^{ij}-\,^va^{ij} $, $\,^ub^k- \,^vb^k,$ and $\,^uc- \, ^vc$ are bounded by $C||u-v||_{C^{2+\sigma,1+\frac\sigma2}_0(M\times[0,S_0])}$ for some constant $C$ independent of the coordinate neighborhoods. Here $C$ may depend on the upper bound of the norms of $u, v$ in $C^{2+\sigma,1+\frac\sigma2}_0(M\times[0,S_0])$. Hence the norms of the operators $D\cF_u, D\cF_v$ satisfy:
   $$
   ||D\cF_u- D\cF_v||\le C||u-v||_{C^{2+\sigma,1+\frac\sigma2}_0(M\times[0,S_0])}.
   $$
   This implies $D\cF_v$ is continuous in $v$. Here we have used the fact that $(1-|\n w|^2)>\frac12$ for all $w\in \mathcal{B}.$ We claim that $D\cF_v$ is the differential of $\cF$ at $v$.  Let $v\in \mathcal{B}$ be fixed and $h\in \mathcal{B}$ so that $||h||_{C^{2+\sigma,1+\frac\sigma2}_0(M\times[0,S_0])}$ (which will be denoted simply by $||h||$ in the following) is small enough. Then

\bee
\begin{split}
\cF(v+h)-\cF(v)-D\cF_v(h)=&\int_0^1\frac{d}{d \e}\cF(v+\e h) d\e -D\cF_v(h)\\
=&\int_0^1(D\cF_{v+\e h}(h)-D\cF_v(h)) d\e\\
\end{split}
\eee
Hence
\bee
||\cF(v+h)-\cF(v)-D\cF_v(h)||_{C^{\sigma,\frac\sigma2}}\le C||h||^2.
\eee
So $D\cF_v$ is the differential of $\cF$ at $v$. This completes the proof of the lemma.
\end{proof}

\begin{lma}\label{l-bijection}
In addition to Assumption \ref{a-1}, suppose $|||\on^k\cH|||_{\Omega}$ are uniformly bounded. For any $v\in \mathcal{B}$, $D\cF_v$ is a bijection from $C_0^{2+\sigma,1+\frac\sigma2}(M\times[0,s_0])$ onto
$C^{ \sigma, \frac\sigma2}(M\times[0,s_0]$.
\end{lma}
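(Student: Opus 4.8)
The operator appearing in \eqref{e-DF} is a linear second order operator $L\phi = \phi_s-\,^va^{ij}\phi_{ij}+\,^vb^k\phi_k+\,^vc\,\phi$ on $M\times[0,s_0]$, and the plan is to show it is an isomorphism between the indicated Banach spaces by combining a \emph{global Schauder a priori estimate} with the \emph{continuity method}, the noncompactness of $M$ being absorbed into the quasi-coordinate machinery of Lemma \ref{l-bounded geometry} and the bounded-geometry conclusions of Lemma \ref{l-foliation-2}. First I would record that $L$ is uniformly parabolic of bounded-geometry type: from \eqref{e-abc}, $\,^va^{ij}=\,^vg^{ij}+(1-|\n v|^2)^{-1}v^iv^j$, and since $v\in\mathcal{B}$ forces $1-|\n v|^2>\tfrac12$ while Lemma \ref{l-foliation-2} makes $\xi_p^*(g(t))$ uniformly equivalent to the Euclidean metric on $D(r)$ with all derivatives bounded independently of $p$ and $|t|<a$, one gets $\,^va^{ij}\xi_i\xi_j\ge\Lambda^{-1}|\xi|^2$ for a uniform $\Lambda$; using in addition the hypothesis $|||\on^k\cH|||_\Omega<\infty$ and $v\in C^{2+\sigma,1+\frac\sigma2}_0$, the coefficients $\,^va^{ij},\,^vb^k,\,^vc$ have $C^{\sigma,\frac\sigma2}(D(r)\times[0,s_0])$ norms bounded uniformly in $p$.

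Next I would establish the global estimate. Choosing the chart radius a bit larger than the radius used to define the Hölder norms (legitimate by bounded geometry and Lemma \ref{l-equiv-holder}), interior parabolic Schauder estimates applied in each chart, together with the uniform coefficient bounds above, give a constant $C$ independent of $p$ with
$$\|\xi_p^*\phi\|_{C^{2+\sigma,1+\frac\sigma2}(D(r)\times[0,s_0])}\le C\big(\|\xi_p^*(L\phi)\|_{C^{\sigma,\frac\sigma2}}+\|\phi\|_{C^0(M\times[0,s_0])}\big).$$
Taking the supremum over $p$ and recalling Definition \ref{d-holder space} yields $\|\phi\|_{C^{2+\sigma,1+\frac\sigma2}}\le C(\|L\phi\|_{C^{\sigma,\frac\sigma2}}+\|\phi\|_{C^0})$ on $M\times[0,s_0]$. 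To remove the last term I would set $\psi=e^{-\mu s}\phi$ with $\mu>1+\sup|\,^vc|$, so that $\psi(\cdot,0)=0$ and $\psi$ solves a parabolic equation whose zeroth order coefficient $\,^vc+\mu\ge1$ is positive and whose right-hand side is $e^{-\mu s}L\phi$; since $\psi$ is bounded and $(M,g_0)$ is complete with bounded geometry, the maximum principle for bounded solutions on complete manifolds gives $\sup|\psi|\le\|L\phi\|_{C^0}$, hence $\|\phi\|_{C^0}\le e^{\mu s_0}\|L\phi\|_{C^0}$. Combining, $\|\phi\|_{C^{2+\sigma,1+\frac\sigma2}(M\times[0,s_0])}\le C''\|L\phi\|_{C^{\sigma,\frac\sigma2}(M\times[0,s_0])}$ for all $\phi\in C_0^{2+\sigma,1+\frac\sigma2}$; in particular $L=D\cF_v$ is injective with closed range.

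For surjectivity I would run the continuity method in the lower-order terms: for $t\in[0,1]$ put $L_t\phi=\phi_s-\,^va^{ij}\phi_{ij}+t(\,^vb^k\phi_k+\,^vc\,\phi)$, so that $L_0$ has the same uniformly parabolic, uniformly Hölder principal part and no lower-order terms, while $L_1=D\cF_v$. Each $L_t$ satisfies the estimate of the previous paragraph with a constant uniform in $t$ (the coefficients vary in a bounded set), and the Cauchy problem $L_0\phi=f$, $\phi(\cdot,0)=0$ is solvable on $M\times[0,s_0]$ by the standard theory of linear parabolic equations on manifolds of bounded geometry — exhaust $M$ by geodesic balls, solve the Dirichlet problems, and pass to the limit using the uniform Schauder estimates for compactness, the bound $\sup|\phi|\le s_0\|f\|_{C^0}$ coming from the maximum principle. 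Hence the set $T=\{t\in[0,1]:L_t\text{ is onto }C^{\sigma,\frac\sigma2}(M\times[0,s_0])\}$ is nonempty, and it equals $[0,1]$: writing $L_t=L_{t_0}\big(\mathrm{Id}+(t-t_0)L_{t_0}^{-1}(\,^vb^k\partial_k+\,^vc)\big)$ and using $\|L_{t_0}^{-1}\|\le C''$ uniformly, a Neumann series argument shows $L_t$ is bijective for $|t-t_0|$ less than a fixed step size, so finitely many steps reach $t=1$. Thus $D\cF_v=L_1$ is a bijection.

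The main obstacle is genuinely the transition from local (chart) parabolic theory to global statements on the noncompact $M$: one must know that the parabolicity constant, the Schauder constant, and the constant in the solvability of the base Cauchy problem are all uniform over the quasi-coordinate charts, which is precisely what Lemmas \ref{l-bounded geometry} and \ref{l-foliation-2} deliver. A secondary delicate point is the $C^0$ estimate, which requires a maximum principle valid for merely bounded (not decaying) solutions on a complete manifold, handled here by the exponential shift producing a positive zeroth-order coefficient.
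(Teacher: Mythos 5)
Your argument is correct, but it takes a genuinely different route to surjectivity and packages injectivity differently than the paper does. The paper proves injectivity by a direct barrier/maximum-principle argument (comparing against $\Theta = \exp(C_3 s + \eta)$, with $\eta$ an exhaustion function of bounded gradient and Hessian), and proves surjectivity by directly solving $D\cF_v\phi_\ell=\eta_\ell f$ on a compact exhaustion $\Omega_\ell$ with zero initial and lateral boundary data, invoking Friedman's existence theorem, and passing to the limit using interior Schauder estimates. You instead front-load a global quantitative a priori estimate $\|\phi\|_{C^{2+\sigma,1+\sigma/2}}\le C\|D\cF_v\phi\|_{C^{\sigma,\sigma/2}}$ (patching uniform interior Schauder in the quasi-charts with a maximum-principle $C^0$ bound via the exponential shift $\psi=e^{-\mu s}\phi$), extract injectivity from it, and then obtain surjectivity by a continuity method with base operator $L_0$ (no lower-order terms). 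This is a legitimate and somewhat more systematic scheme, and making the a priori estimate explicit is a nice piece of bookkeeping the paper leaves implicit.

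Two remarks. First, the continuity step is a detour: the very exhaustion argument you invoke to solve $L_0\phi=f$, $\phi(\cdot,0)=0$ on $M\times[0,s_0]$ works verbatim for the full operator $D\cF_v$ — nothing about the lower-order terms $\,^vb^k\partial_k+\,^vc$ obstructs Friedman's local solvability or the limit passage — which is precisely what the paper does, so you could delete the homotopy $L_t$ entirely. Second, when solving the Cauchy–Dirichlet problems on the exhausting domains you impose $\phi=0$ on both the initial slice and the lateral boundary; the local existence theorem (and $C^{2+\sigma,1+\sigma/2}$ regularity up to the corner) requires the compatibility condition that the right-hand side vanish along $\partial\Omega_{\ell+1}\times\{0\}$. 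The paper arranges this by replacing $f$ with $\eta_\ell f$ where $\eta_\ell$ is a cutoff vanishing near $\partial\Omega_{\ell+1}$; your write-up omits this. It is fixable either by the same cutoff or by working only with interior Schauder estimates away from the lateral boundary, but it should be addressed.
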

\begin{proof} Again, denote $C_0^{2+\sigma,1+\frac\sigma2}(M\times[0,s_0])$ by $C^{2+\sigma,1+\frac\sigma2}_0$ etc.  To prove that $D\cF_v$ is injective, let $\phi\in C_0^{2+\sigma,1+\frac\sigma2}$ such that $D\cF(\phi)=0$. Since $(M,g(0))$ has bounded curvature, we can find a smooth function $\eta$ on $M$ such that $\a$ is uniformly equivalent to the distance function of $g(0)$ with respect to a fixed point such that $\eta$ has bounded gradient and bounded Hessian with respect to $g(0)$, see \cite{Shi97,Tam2010}. Hence $\eta_i, \eta_{ij}$ with respect to any quasi local coordinates are uniformly bounded by a constant $C_1$ independent of the coordinate neighborhood. Here we denote the pull back of a function $f$ by $f$ again. Using $\eta$ one can proceed in a standard way. Namely in any quasi local coordinate neighborhood, we have
\bee
|D\cF_v(\exp(\a))|\le C_2\exp(\a)
\eee
where $C_2$ is independent of the quasi local coordinate neighborhood. Let  $\Theta=  \exp(C_3s+\a)$ for some $C_3$ to be determined later. Then for $\e>0$, $\phi-\e\Theta\le0$ at $s=0$. Suppose it is positive somewhere, then there is $p\in M$ and $0<s\le s_0$ such that $\phi-\e\Theta $ attains maximum at $(p,s)$. In the quasi coordinate neighborhood centered at $p$, we have, $\phi_i=\e\Theta_i$ and
\bee
\begin{split}
0\le &(\phi_s-\e\Theta)_s-\,^v a^{ij}(\phi-\e\Theta)_{ij}\\
=&- \,^vc(\phi-\e\Theta)-D\cF_v(\e\Theta)\\
\le &\e\Theta(-C_3 +C_2)\\
=&-\e C_2\Theta
\end{split}
\eee
if $C_3=2C_2$. This is a contradiction. Hence $\phi\le \e\Theta$. Let $\e\to0$ we have $\phi\le0$. Similarly, $-\phi\le0$. So $\phi=0$ and $D\cF_v$ is injective.

To prove that $\cF_v$ is surjective, let $f\in C^{\sigma,\frac\sigma2}$ and let $\Omega_\ell$ be compact domains in $M$ with smooth boundary such that $\Omega_{\ell}\Subset \Omega_{\ell+1}$ which exhaust $M$.  Let $\eta_\ell$ be smooth function so that $0\le \eta_\ell\le1$, $\eta_\ell=0$ outside $\Omega_{\ell+1}$ and $\eta_\ell=1$ on $\Omega_{\ell}$. Then one can find a solution   $\phi_\ell$  of the following initial-boundary value problem:
\bee
\left\{
  \begin{array}{ll}
    D\cF_v(\phi_\ell)=\eta_\ell f, & \hbox{in $\Omega_{\ell+1}\times[0,s_0]$;}\\
\phi_\ell=0,&\hbox{at $s=0$ and $\p\Omega_{\ell+1}\times[0,s_0]$}.
  \end{array}
\right.
\eee
The solution $\phi_\ell$ exists  by \cite[Theorem 4, Chapter 3]{Friedman}, because $\eta_\ell f=0$ on $\p\Omega_{\ell+1}$, so that  the compatibility condition $D\cF_v(\psi)=\eta_\ell f$ at $\p\Omega_{\ell+1}\times\{0\}$, where $\psi\equiv 0$, is satisfied.   Moreover, the pull back of $\phi_{\ell}$ is in $C^{2+\sigma,1+\frac\sigma2}(D(r)\times[0,s_0])$ in any quasi local coordinate neighborhood contained in $\Omega_{\ell}$.    Since $\,^vc$ is uniformly bounded independent of quasi local coordinates, by the maximum principle   \cite[Theorem 2.10]{Lieberman} we can see $\phi_\ell$ is uniformly bounded by a constant  independent of $\ell$.      Standard Schauder estimates (see \cite[Theorem 4, Chapter 4]{Friedman})  show that we can find $\phi_\ell$ satisfying $D\cF(\phi_\ell)=f$ in $\Omega_{\ell}\times[0,s_0]$ such  that for any $p\in \Omega_\ell$ with $B_p(r)\Subset \Omega_\ell$, for   the quasi local coordinates neighborhood $\xi_p:D(r)\to B_p(r)$,  $||\phi_{\ell}||_{C^{2+\sigma,1+\frac\sigma2}(D(\frac r2)\times[0,s_0])}$ is bounded by a constant independent of $p$ and $\ell$. Here we denote the pull back of $\phi_\ell$ also by $\phi_\ell$. Passing to a subsequence, we can find $\phi$ such that $D\cF_v(\phi)=f$ on $M\times[0,s_0]$ with $\phi=0$ at $s=0$. Moreover, $||\phi ||_{C^{2+\sigma,1+\frac\sigma2}(D(\frac r2)\times[0,s_0])}$ uniformly bounded by a constant in any quasi local coordinate neighborhood $\xi_p:D(r)\to B_p(r)$. From this we see that $\phi\in C^{2+\sigma,1+\frac\sigma2}(M\times[0,s_0])$ by Lemma \ref{l-equiv-holder}.

\end{proof}

Now we are ready to prove the following short time existence result.

\begin{proof}[Proof of Theorem \ref{t-shorttime Lorentz}] By Lemmas \ref{l-tilt},  \ref{l-foliation-1}, we may assume $a>0$ is small enough so that $\bX$ is an immersion. In order to construct the flow, without loss of generality we may identify $N$ with $(-a,a)\times M$ with metric $G=-dt^2+g(t)$. Moreover, we can introduce a family of quasi local coordinate neighborhoods as described by Lemma \ref{l-bounded geometry} so  that $g(t)$ satisfy the conclusions of  Lemma \ref{l-foliation-2}.

Let $v_0(\bx,s)=sH^o(\bx,0)-s\cH(\bx,0)$ defined on $M\times[0,s_0]$, where $H^o$ is the mean curvature of the slice $t=0$ in $(-a,a)\times M$. By Lemmas \ref{l-foliation-2},  \ref{l-tilt}, \ref{l-tilt-equivalent} and the fact that $|||\on^k\cH|||_{N}$ is finite  for $k=0, 1$, we can choose $s_0>0$ small enough so that $\sup_{M\times[0,s_0]}|v_0|<a$ and $\inf_{M\times[0,s_0]}(1-|\n v_0|^2)>\frac12$. On the other hand, fixed $0<\sigma<1$,  one can define $C^{2\sigma,\frac\sigma2}_0(M\times[0,s_0])$ and $C^{\sigma,\frac\sigma2}(M\times[0,s_0])$. As before let
$$
\mathcal{B}=\{v\in C^{2+\sigma,1+\frac\sigma2}_0(M\times[0,s_0])| \sup_{M\times[0,s_0]}|v|<a, \inf_{M\times[0,s_0]}(1-|\n v|^2)>\frac12\}.
$$
Hence $v_0\in \mathcal{B}.$ In fact, by Lemma \ref{l-foliation-2} and the fact that $|||\on^k\cH|||_{N}$ are finite  for any $k$, we have
\be\label{e-Lip}
|\xi_p^*(v_0)(\bx, s)-\xi_p^*(v_0)(\bx', s')|\le C_1\lf(|\bx-\bx'|+|s-s'|\ri)
\ee
in $D(r)\times[0,s_0]$, for   any quasi local coordinate neighborhood $\xi_p^*:D(r)\to B_p(r)$. Here $C_1$ is a constant independent of $p$.

 Let
\bee
\cF(v)=
\frac{\p v}{\p s}+\la \ppt,\vn\ra^{-1}(H-\cH)
\eee
where $\vn$ is the future direct unit normal of the graph of $v$: $ (\bx, v(\bx,s))\in (-a,a)\times M$ and $H$ is the mean curvature. Then $\cF$ maps $\mathcal{B}$ to $C^{\sigma,\frac\sigma2}(M\times[0,s_0]$ which is $C^1$ and $D\cF_v$ is bijective by Lemmas \ref{l-F-range} and \ref{l-bijection}. Hence $\cF$ maps a neighborhood of $v_0$ onto a neighborhood of $f_0=:\cF(v_0)$. By the definition of $v_0$ and \eqref{e-F-2}, $f_0(\bx,0)=0$ because $H^o=\frac12 g^{ij}\p_t g_{ij}$.

For $0<\lambda<s_0$, define
\bee
f_\lambda(\bx,s)=\left\{
  \begin{array}{ll}
    0 & \hbox{if $0\le s\le \lambda$.}\\
f_0(\bx, s-\lambda)& \hbox{if $\lambda< s\le s_0$.}
  \end{array}
\right.
\eee

\begin{claim}\label{claim}
$||f_0-f_\lambda||_{C^{\sigma,\frac\sigma2}(M\times[0,s_0])}\le C\lambda^{1-\sigma}$ for some $C$ independent of $\lambda$.
\end{claim}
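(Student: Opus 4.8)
The plan is to estimate the $C^{\sigma,\frac{\sigma}{2}}(M\times[0,s_0])$ norm of the difference $f_0-f_\lambda$ directly from the two facts we have about $f_0$: first, $f_0(\bx,0)=0$ identically on $M$; and second, $f_0=\cF(v_0)$ lies in $C^{\sigma,\frac{\sigma}{2}}(M\times[0,s_0])$ with a uniform bound on its H\"older norm (Lemma~\ref{l-F-range}), and moreover, by the explicit Lipschitz bound \eqref{e-Lip} for $\xi_p^*(v_0)$ together with the structure formula \eqref{e-F-2} and Lemma~\ref{l-foliation-2}, $\xi_p^*(f_0)$ is H\"older in space-time and in particular, being a function that vanishes at $s=0$ and is $C^{\sigma,\frac\sigma2}$, one has the pointwise bound $|\xi_p^*(f_0)(\bx,s)|\le C s^{\sigma/2}$ with $C$ independent of $p$. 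All estimates below are carried out after pulling back by a quasi-coordinate chart $\xi_p:D(r)\to B_p(r)$, and by Lemma~\ref{l-equiv-holder} it suffices to bound the $C^{\sigma,\frac\sigma2}(D(r)\times[0,s_0])$ norms of $\xi_p^*(f_0-f_\lambda)$ uniformly in $p$.

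The key steps are the following. First I would bound the sup norm: for $0\le s\le\lambda$ we have $f_\lambda=0$ and $|f_0(\bx,s)|\le Cs^{\sigma/2}\le C\lambda^{\sigma/2}$, while for $\lambda<s\le s_0$ we have $f_0(\bx,s)-f_\lambda(\bx,s)=f_0(\bx,s)-f_0(\bx,s-\lambda)$, which by the $\tfrac\sigma2$-H\"older continuity in time of $f_0$ is bounded by $C\lambda^{\sigma/2}$. Hence $\sup|f_0-f_\lambda|\le C\lambda^{\sigma/2}$, and since $\lambda\le s_0$ is bounded, this is $\le C\lambda^{1-\sigma}$ precisely when $\tfrac\sigma2\ge 1-\sigma$, i.e. $\sigma\ge\tfrac23$; in general one simply records $\sup|f_0-f_\lambda|\le C\lambda^{\sigma/2}$ and observes that the claim as used only requires the weaker exponent — or, more cleanly, one notes $\lambda^{\sigma/2}=\lambda^{1-\sigma}\cdot\lambda^{3\sigma/2-1}$ and absorbs the extra power of $\lambda$ into $C$ when $3\sigma/2\ge1$. (In the intended application $\sigma$ can be fixed close to $1$, so I will just take $\sigma>2/3$ and get the stated exponent directly.) Second, I would bound the spatial H\"older seminorm $[\,f_0-f_\lambda\,]_{x,\sigma}$: for $s\le\lambda$ this is $[f_0(\cdot,s)]_{x,\sigma}\le C$, but here we can also interpolate — since $f_0(\cdot,0)\equiv0$ its spatial seminorm vanishes at $s=0$, and $s\mapsto[f_0(\cdot,s)]_{x,\sigma}$ is $\tfrac\sigma2$-H\"older (again from $f_0\in C^{\sigma,\frac\sigma2}$ after a standard interpolation argument), giving $[f_0(\cdot,s)]_{x,\sigma}\le Cs^{\sigma/2}\le C\lambda^{\sigma/2}$; for $s>\lambda$ we use $[f_0(\cdot,s)-f_0(\cdot,s-\lambda)]_{x,\sigma}\le C\lambda^{\sigma/2}$ by the same time-regularity. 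Third, the parabolic time seminorm $\sup_{\bx}\sup_{s\ne s'}|g(\bx,s)-g(\bx,s')|/|s-s'|^{\sigma/2}$ for $g=f_0-f_\lambda$: split into the cases $s,s'$ both $\le\lambda$ (difference is $f_0(\bx,s)-f_0(\bx,s')$, bounded using time-H\"older of $f_0$), both $>\lambda$ (reduces to time-H\"older of $f_0$ at shifted times), and $s\le\lambda<s'$ (write $g(\bx,s)-g(\bx,s')=f_0(\bx,s)-\big(f_0(\bx,s')-f_0(\bx,s'-\lambda)\big)$ and estimate each piece against $|s-s'|^{\sigma/2}$, using $s'-\lambda\le s'-s$ in the last piece). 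Combining all three pieces gives $\|f_0-f_\lambda\|_{C^{\sigma,\frac\sigma2}}\le C\lambda^{\sigma/2}\le C\lambda^{1-\sigma}$.

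The main obstacle, and the only nontrivial input, is establishing that $\xi_p^*(f_0)\in C^{\sigma,\frac\sigma2}(D(r)\times[0,s_0])$ with norm bounded uniformly in $p$ and that this norm can be controlled via $f_0(\cdot,0)\equiv0$ into the decay $|f_0(\cdot,s)|+[f_0(\cdot,s)]_{x,\sigma}\le Cs^{\sigma/2}$. But the uniform H\"older bound is exactly Lemma~\ref{l-F-range} (applied to $v_0$, which by \eqref{e-Lip} has uniformly controlled $C^{2+\sigma,1+\frac\sigma2}$ structure), and the passage from ``vanishes at $s=0$'' plus ``$C^{\sigma,\frac\sigma2}$'' to the $s^{\sigma/2}$-decay of the sup and spatial seminorm is the standard elementary fact about parabolic H\"older spaces (interpolation in the time variable), which I would state and use without belaboring. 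Everything else is bookkeeping over the finitely many case splits in $s$ relative to $\lambda$; the constant $C$ depends on the fixed data ($n$, $a$, the curvature/lapse bounds, $\|\cH\|$, and the $C^{2+\sigma,1+\frac\sigma2}$ norm of $v_0$) but not on $\lambda$ or $p$, which is what the claim asserts.
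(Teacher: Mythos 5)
Your proposal takes a genuinely different route from the paper and, as written, has a gap. The paper's proof relies on the fact that $f_0=\cF(v_0)$ is \emph{Lipschitz} in $(\bx,s)$ (this is \eqref{e-wwtau} applied with $\lambda=0$): because $v_0(\bx,s)=s\bigl(H^o(\bx,0)-\cH(\bx,0)\bigr)$ is linear in $s$ with smooth spatial dependence, and the coefficients appearing in \eqref{e-F-2} together with their $t$- and $\bx$-derivatives are uniformly bounded (Lemma~\ref{l-foliation-2}), $f_0$ satisfies a full Lipschitz bound, not merely a H\"older one. The paper then interpolates between the sup bound $|\eta|\le C\lambda$ (which follows from Lipschitz plus $f_0(\cdot,0)=0$) and the Lipschitz modulus itself, via $\min\{\lambda,\ |\bx-\bx'|+|s-s'|\}\le C\lambda^{1-\sigma}(|\bx-\bx'|^\sigma+|s-s'|^{\sigma/2})$, which gives the exponent $1-\sigma$ for every $0<\sigma<1$.

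Your proposal instead deliberately weakens the input to $f_0\in C^{\sigma,\sigma/2}$ together with $f_0(\cdot,0)=0$, and this costs you in two ways. First, the sup bound you obtain is only $|f_0(\cdot,s)|\le Cs^{\sigma/2}$, and the inequality $\lambda^{\sigma/2}\le C\lambda^{1-\sigma}$ fails for $\sigma<2/3$, so the asserted exponent is reached only for $\sigma$ close to $1$, which is strictly weaker than the claim. Second, and more seriously, the step you rely on---that $s\mapsto[f_0(\cdot,s)]_{x,\sigma}$ is $\tfrac\sigma2$-H\"older ``by a standard interpolation argument''---is simply false. Membership in $C^{\sigma,\sigma/2}$ with vanishing initial data does not force the spatial H\"older seminorm to decay. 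For instance, $g(\bx,s)=\min\{s^{\sigma/2},|\bx|^\sigma\}$ on $D(1)\times[0,1]$ is in $C^{\sigma,\sigma/2}$ with bounded norm and has $g(\cdot,0)\equiv0$, yet taking $\bx=0$ and $0<|\bx'|<s^{1/2}$ gives $|g(0,s)-g(\bx',s)|/|\bx'|^\sigma=1$, so $[g(\cdot,s)]_{x,\sigma}\ge 1$ for every $s>0$ and the seminorm jumps at $s=0$. Thus your spatial seminorm estimate does not go through, and the argument as stated proves neither the claimed exponent nor the seminorm part of the H\"older norm.

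The repair is to keep the Lipschitz information you already have (from \eqref{e-Lip}, the structure of $\cF$ in \eqref{e-F-2}, and Lemma~\ref{l-foliation-2}) rather than discarding it: once $|f_0(\bx,s)-f_0(\bx',s')|\le C(|\bx-\bx'|+|s-s'|)$ and $f_0(\cdot,0)=0$ are in hand, the case split on whether $|\bx-\bx'|\gtrless\lambda$ and the elementary interpolation $\lambda=\lambda^{1-\sigma}\lambda^\sigma$, $|\bx-\bx'|=|\bx-\bx'|^{1-\sigma}|\bx-\bx'|^\sigma$ yield $C\lambda^{1-\sigma}$ for all $0<\sigma<1$, which is exactly how the paper proceeds.
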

If the claim is true, then $f_\lambda\in C^{\sigma,\frac\sigma2}(M\times[0,s_0])$ and by the inverse function theorem, for $\lambda>0$ small enough, we can find $w$ such that
$$
\mathcal{F}(w)=f_\lambda.
$$
In particular, $w$ is a solution of \eqref{e-w-1} on $M_0\times[0,\lambda]$, $w\in \mathcal{B}$.

To prove {\bf Claim} \ref{claim}, let $\xi_p:D(r)\to B_p(r)$ be a quasi local coordinate neighborhood. Since  $f_0=0$, at $s=0$, by \eqref{e-Lip},   that there is $C_2>0$ independent of $p$, $\lambda$ such that if $x, x'\in D(r)$ and $s, s'\in [0,s_0]$, we have
\be\label{e-wwtau}
|\xi_p^* f_\lambda(\bx,s)-\xi_p^*f_\lambda(\bx',s')|\le C_2(|\bx-\bx'|+|s-s'|).
\ee
Let $\eta=f_0-f_\lambda$. Then
\bee
\eta(\bx,s)=\left\{
  \begin{array}{ll}
    f_0(\bx,s) & \hbox{if $0\le s\le \lambda$.}\\
f_0(\bx, s)-f_0(\bx,s-\lambda)& \hbox{if $\lambda<s\le s_0$.}
  \end{array}
\right.
\eee
There is a constant $C_3>C_2$ independent of $p$ such that $|\eta|\le C_3\lambda$ by \eqref{e-Lip} and the fact that $f_0=0$ at $s=0$.
For $\bx, \bx'\in D(r), s, s'\in[0,s_0]$.

\underline{\it Case 1}: $s, s'\le \lambda$. Then
\bee
|\xi_p^*\eta(\bx,s)-\xi_p^*\eta(\bx',s')|=|\xi_p^*f_0(\bx,s)-\xi_p^*f_0(\bx',s')|\le C_3\min\{\lambda, |x-x'|+|s-s'|\}
\eee
because $f_0(\bx,0)=0$ and $s, s'\le \lambda$. If $|\bx-\bx'|\ge \lambda$, then
\bee
|\xi_p^*\eta(\bx,s)-\xi_p^*\eta(\bx',s')|\le C_3\lambda^{1-\sigma}\lambda^\sigma\le C_3\lambda^{1-\sigma}(|\bx-\bx'|^\sigma+|s-s'|^{\frac\sigma2}).
\eee
If $|\bx-\bx'|\le \lambda$, then by \eqref{e-Lip}

\bee
\begin{split}
|\xi_p^*\eta(\bx,s)-\xi_p^*\eta(\bx',s')|\le &C_2(|\bx-\bx'|+|s-s'|)\\
=&C_2(|\bx-\bx'|^{1-\sigma}|\bx-\bx'|^\sigma+|s-s'|^{1-\frac\sigma2}|s-s'|^\frac\sigma2)\\
\le &C_4\lambda^{1-\sigma}(|\bx-\bx'|^\sigma+|s-s'|^{\frac\sigma2}).
\end{split}
\eee
for some $C_4$ independent of $p$.

\underline{\it Case 2}: $s, s'\ge \lambda$.
\bee
\begin{split}
|\xi_p^*\eta(\bx,s)-\xi_p^*\eta(\bx',s')|=&|\xi_p^*f_0(\bx,s)-\xi_p^*f_0(\bx,s-\lambda)
-\xi_p^*f_0(\bx',s')
+\xi_p^*f_0(\bx',s'-\lambda|\\
\le& C_3\min\{\lambda, |\bx-\bx'|+|s-s'|\}
\end{split}
\eee
As before, we have
\bee
|\xi_p^*\eta(\bx,s)-\xi_p^*\eta(\bx',s')|\le C_4\lambda^{1-\sigma}(|x-x'|^\sigma+|s-s'|^{\frac\sigma2}).
\eee

{\it Case 3}: $s\ge \lambda\ge s'$, then
\bee
\begin{split}
|\xi_p^*\eta(\bx,s)-\xi_p^*\eta(\bx',s')|\le &|\xi_p^*\eta(\bx,s)-\xi_p^*\eta(\bx,\lambda)|+|\xi_p^*\eta(\bx,\lambda)-\xi_p^*\eta(\bx',s')|\\
\le &2C_4\lambda^{1-\sigma}\lf(|s-\lambda|^\frac\sigma 2+|\bx-\bx'|^\sigma+|s'-\lambda|^\frac\sigma2\ri)\\
\le &3C_4\lambda^{1-\sigma}\lf( |\bx-\bx'|^\sigma+|s-s'|^\frac\sigma2\ri)
\end{split}
\eee
because $s\ge \lambda\ge s'$. Hence the claim is true.

To conclude, we can find a solution $w$ to \eqref{e-w-1}  on $M\times[0,\e]$ for some $\e>0$, so that   $w\in \mathcal{B}$. By differentiate the equation \eqref{e-F-2} with respect to $x^k $, by Schauder  estimates, one can conclude that $\xi_p^*(w)_i$ the $C^{2+\sigma,1+\frac\sigma2}(D(r')\times[0,\e])$ are uniformly bounded for $r'<r$, in any quasi local coordinate neighborhood. Here we have used the fact that all derivatives of $g_{ij}(t)$ and all derivatives of $\cH$ are bounded and the fact that $1-|\n w|^2>\frac12$. Continue in this way, we see that all the derivatives of $\xi_p^*(w)$ with respect to $\bx$ are bounded. Using the equation \eqref{e-F-2} again, we conclude that all the derivatives of $\xi^*(w_s)$ with respect to $\bx$ are bounded. Differentiate with respect to $s$, one can finally conclude that all the derivatives of $\xi^*_p(w)$ in $D(\frac r2)$ are uniformly bounded independent of $p$. By Lemma \ref{l-reduction-2}, we conclude that the short time existence of the prescribed mean curvature flow. Moreover, $\bF$ is smooth and the second fundamental form of $\cF(\cdot,s)(M)$ together with it covariant derivatives are uniformly bounded by the bounds of derivatives of $w$ and the bounds of derivatives of $g(t)$. Since $1-|\n w|^2>\frac12$, we have
$$
-\la \ppt, \vn\ra\le C
$$
for some constant $C$ in $M\times[0,\lambda]$. Hence by \cite{Bartnik1984}
\bee
-\la \bT,\vn\ra\le 2\la   \bT,\ppt\ra\la \ppt,\vn\ra\le C'
\eee
for some constant $C'$ in $M\times[0,\lambda]$. Since $1-|\n w|^2>\frac12$, by Lemma \ref{l-foliation-2}, the metric pulled back by $\cF(\cdot,s)$ is equivalent to $g(0)$ and  is complete. Hence the theorem is proved.

\end{proof}

We would like to discuss the condition (i) in Theorem \ref{t-shorttime Lorentz}. If $N$ is time like geodesically complete, then (i) is satisfied automatically. But this is too strong. On the other hand, we have the following.

\begin{lma}\label{e-existence geodesic} Let $(N,G)$ be a Lorentz manifold with time function $\tau$ as before. Suppose there is a smooth function $\rho>0$ satisfying condition \eqref{e-rho}. Let $p\in \Omega_{\tau_1,\tau_2}\subset N$ and let $v$ be a future time like unit vector at $p$. Assume $|||\on\bT|||\le c$ in $\Omega_{\tau_1,\tau_2}$. Suppose either $|||1/\a|||_{0,\Omega_{\tau_1,\tau_2}}\le b$, or $|||\on \log\a|||_{0,\Omega_{\tau_1,\tau_2}}\le b$. Then the time like geodesic $\gamma(t)$ with $\gamma(0)=p, \gamma'(0)=v$  can be defined on $(-t_0,t_0)$ for some $t_0$ depending only on $b$, $-\la v,\bT\ra$, $\tau_2-\tau(p), \tau(p)-\tau_1$ and upper bound of $1/(\a(p)) $ so that $\gamma(t)\in \Omega_{\tau_1,\tau_2}$.
\end{lma}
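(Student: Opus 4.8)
Here is how I would prove Lemma~\ref{e-existence geodesic}.

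The plan is to run a Gr\"onwall-type bootstrap along the affinely parametrized geodesic $\gamma$ with $\gamma(0)=p$ and $\gamma'(0)=v$, controlling three scalars: the tilt $\phi(t)=-\la\gamma'(t),\bT\ra_G$, the time value $\tau(\gamma(t))$, and the auxiliary value $\rho(\gamma(t))$. The first two will produce a definite existence time $t_0$; the third, together with \eqref{e-rho}, will confine $\gamma$ to a compact subset of $N$ so that the geodesic equation can be continued up to time $t_0$. Throughout, $\gamma'$ remains a future-pointing unit timelike vector along $\gamma$, since both $\la\gamma',\gamma'\ra_G$ and the time orientation are preserved by a geodesic.

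\emph{The estimates.} Exactly as in Lemma~\ref{l-tilt}: from $\on_{\gamma'}\gamma'=0$ and $\on_{\gamma'}\bT\perp_G\bT$ I get $\phi'=-\la\gamma',\on_{\gamma'}\bT\ra_G=-\la\gamma',\on_{\gamma'}\bT\ra_{G_E}$, and $|||\gamma'|||_{G_E}^2\le2\phi^2$ by Lemma~\ref{l-tilt-equivalent}, so $|\phi'|\le2c\,\phi^2$ wherever $\gamma(t)\in\Omega_{\tau_1,\tau_2}$; integrating gives $\phi(t)\le2\phi(0)$ for $|t|\le t_1:=(4c\phi(0))^{-1}$ (I shrink $t_1$ further below if needed). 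Since $\on\tau=-\a^{-1}\bT$ one has $\frac{d}{dt}\tau(\gamma(t))=\a^{-1}\phi>0$, so $\tau\circ\gamma$ is strictly increasing. Under $|||1/\a|||_{0,\Omega_{\tau_1,\tau_2}}\le b$ this bounds $\frac{d}{dt}\tau(\gamma)\le2b\,\phi(0)$ on $|t|\le t_1$; under $|||\on\log\a|||_{0,\Omega_{\tau_1,\tau_2}}\le b$ I would instead bound $\big|\frac{d}{dt}\log\a(\gamma)\big|\le Cb\,\phi(0)$ on $|t|\le t_1$, whence $\a(\gamma(t))^{-1}\le\a(p)^{-1}e^{Cb\phi(0)|t|}\le C'\a(p)^{-1}$ there, hence $\frac{d}{dt}\tau(\gamma)\le C'\a(p)^{-1}\phi(0)$. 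Either way $|\tau(\gamma(t))-\tau(p)|\le\Lambda|t|$ for $|t|\le t_1$, with $\Lambda$ depending only on $b$, $\phi(0)$ and (in the second case) an upper bound for $1/\a(p)$. Finally, the first line of \eqref{e-rho} yields $\big|\frac{d}{dt}\rho(\gamma(t))\big|\le C\,|||\on\rho|||_{0,\ol\Omega_{\tau_1,\tau_2}}\,\phi(0)$ on $|t|\le t_1$, so $\rho\circ\gamma$ grows at most linearly.

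\emph{The continuation argument.} I would set $t_0:=\min\{t_1,\ \Lambda^{-1}(\tau_2-\tau(p)),\ \Lambda^{-1}(\tau(p)-\tau_1)\}$ and let $S$ be the set of $s\in[0,t_0)$ such that $\gamma$ is defined on $[0,s]$ with $\gamma([0,s])\subset\Omega_{\tau_1,\tau_2}$ (the open slab). Then $0\in S$ by local existence of geodesics, and $S$ is an interval. For $s\in S$ the estimates above apply on $[0,s]$, so there $\phi\le2\phi(0)$, $|\tau(\gamma(t))-\tau(p)|\le\Lambda t<\Lambda t_0\le\min(\tau_2-\tau(p),\tau(p)-\tau_1)$, and $\rho(\gamma(t))\le\rho_0:=\rho(p)+C|||\on\rho|||\phi(0)\,t_0$; in particular $(\gamma(s),\gamma'(s))$ lies in $K=\{(q,w)\in TN:\ q\in\ol\Omega_{\tau_1,\tau_2}\cap\{\rho\le\rho_0\},\ |||w|||_{G_E}\le2\phi(0)\}$, which is compact by the second line of \eqref{e-rho}. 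If $T:=\sup S<t_0$, then $(\gamma(s),\gamma'(s))$ stays in the compact set $K$ as $s\uparrow T$, so by the escape lemma for the geodesic flow $\gamma$ extends past $T$; moreover $|\tau(\gamma(T))-\tau(p)|\le\Lambda T<\min(\tau_2-\tau(p),\tau(p)-\tau_1)$, so $\gamma(T)\in\Omega_{\tau_1,\tau_2}$, whence $\gamma([0,s])\subset\Omega_{\tau_1,\tau_2}$ for some $s\in(T,t_0)$, contradicting $T=\sup S$. Hence $S=[0,t_0)$, so $\gamma$ is defined on $[0,t_0)$ with image in $\Omega_{\tau_1,\tau_2}$, and running the same argument on $[-t_0,0]$ completes the proof.

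\emph{Main obstacle.} The Gr\"onwall estimates are routine and mirror Lemma~\ref{l-tilt}; the step requiring care is the continuation: one must confine $\gamma$ to a compact subset of $N$, not merely to the (generally noncompact) slab $\ol\Omega_{\tau_1,\tau_2}$, and this is precisely what pairing the linear growth of $\rho\circ\gamma$ with the properness condition in \eqref{e-rho} supplies. I expect this confinement/continuation step, together with keeping the inequalities on $\tau\circ\gamma$ strict so the continuation stays in the open set, to be the main obstacle; the rest is bookkeeping.
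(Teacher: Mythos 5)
Your proposal is correct and takes essentially the same approach as the paper: the paper likewise first bounds the tilt of $\gamma'$ via Lemma~\ref{l-tilt} (i.e.\ from $|||\on\bT|||\le c$), then controls $\a^{-1}$ along $\gamma$ from whichever of the two hypotheses on $\a$ holds, deduces the linear growth $|\tau(\gamma(t))-\tau(p)|\lesssim|t|$ and $|\rho(\gamma(t))-\rho(p)|\lesssim|t|$ while $\gamma$ stays in the slab, and finally uses the compactness of $\ol\Omega_{\tau_1,\tau_2}\cap\{\rho\le\rho_0\}$ from \eqref{e-rho} to continue the geodesic. The only cosmetic difference is in the last step: the paper argues that $\gamma(t)$ is $d_{G_E}$-Cauchy as $t\uparrow t_1$ and hence extends by compactness, whereas you invoke the escape lemma for the geodesic flow on $TN$ after confining the lift $(\gamma,\gamma')$ to a compact set; both are standard and equivalent here.
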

\begin{proof} Let $a>0$ be given by Lemma \ref{l-tilt} depending only on $c, \kappa_0=-\la v,\bT\ra$. Suppose we can prove the following a priori bound. There is a constant $C$ depending only on the quantities mentioned in the lemma such that whenever $\gamma(t)\subset \Omega_{\tau_1,\tau_2}$, $t\in (-t',t')$ with $t'\le a$, then $\a(\gamma(t))\ge C^{-1}$. Then the lemma is true with
$$
t_0=\min\{a, (\tau(p)-\tau_1)/(2C\kappa_0), (\tau_2-\tau(p))/(2C\kappa_0).
$$
In fact if $t_1$ is the supremum of $t$ so that $\gamma(t)$ is defined and $\gamma(-t,t)\subset\Omega_{\tau_1,\tau_2}$.
If $t_1<t_0$, by Lemma \ref{l-tilt},   $\kappa=:-\la\gamma'(t),\bT\ra\le 2\kappa_0$.
$$
\tau(\gamma(t))-\tau(p)=\int_0^t\la \gamma',\on \tau\ra d\sigma=\int_0^t-\frac1\a\la \gamma',\bT\ra d\sigma,
$$
and
$$
|\tau(\gamma(t))-\tau(p)|\le 2C\kappa_0|t|.
$$
Hence
\be\label{e-tau bound}
   \tau_1+\e\le (\tau(p)-\tau_1)-2C\kappa_0|t|+\tau_1\le \tau(\gamma(t))\le  - (\tau_2-\tau(p))+2C\kappa_0|t|+\tau_2\le \tau_2-\e
\ee
for all $t$ for some $\e>0$. On the other hand,
\bee
|\rho(\gamma(t))-\rho(p)|=|\int_0^t\la\gamma',\on \rho\ra d\sigma|\le C_1 |t|.
\eee
for some constant $C_1$ depending only on $\kappa_0, |||\on\rho|||_{0,\Omega_{\tau_1,\tau_2}}$. In particular,
\be
\rho(\gamma(t))\le C_2
\ee
on $(-t_1,t_1)$. For $t,t'\uparrow t_1$,
\bee
d_{G_E}(\gamma(t),\gamma(t'))\le |\int_t^{t'}|||\gamma'|||d\sigma\le 2\kappa_0|t-t'|.
\eee
By the compactness of the set $\ol\Omega_{\tau_1,\tau_2}\cap\{\rho\le C_2\}$, we conclude that $\gamma(t)$ can be extended up to $-t_1,t_1$. By \eqref{e-tau bound}, we must have $t_1\ge t_0$.

It remains to obtain the a priori estimate mentioned above. In case $|||1/\a|||_{0,\Omega_{\tau_1,\tau_2}}$ this is obvious. Suppose $|||\on\log \a|||_{0,\Omega_{\tau_1,\tau_2}}$, then as long as $\gamma \subset \Omega_{\tau_1,\tau_2}$, then
\bee
\log(\a(\gamma(t)))-\log\a(p)=\int_0^t\la \gamma',\on\log \a\ra d\sigma.
\eee
From this one can obtain the required estimate.
\end{proof}

\begin{rem}\label{r-short time} Hence if in Theorem \ref{t-shorttime Lorentz}, condition (i) is replaced by  $\tau_1<\inf_M\tau\circ \bX_0\le \sup_M\tau\circ\bX_0<\tau_2$, and either $|||1/\a|||_{0,\Omega_{\tau_1,\tau_2}}<\infty$ or $\sup_M1/(\a\circ\bX_0)<\infty $ and $|||\on\log\a|||_{0,\Omega_{\tau_1,\tau_2}}<\infty$, then the conclusion of the theorem is still true. Here conditions involving $\Omega$ will be replaced by $\Omega_{\tau_1,\tau_2}$.

\end{rem}

\section{Long time existence}\label{s-long time}
In this section, we will prove the long time existence result:  Theorem  \ref{t-longtime}.
Let us begin with the following lemma.

\begin{lma}\label{l-extension} Let $(N,G)$, $\bX_0$, $\tau$, $\rho$ be as in Theorem \ref{t-longtime}. Suppose $\bF:M\times[0,s_0)$, $s_0<\infty$,  is a solution of \eqref{e-mcf-1} so that the following are true:
\begin{enumerate}
  \item[(i)] For all $s<s_0$, $\sup_{M\times[0,s]}\kappa<\infty$ and $\sup_{M\times[0,s]}|\n^k A|<\infty$   for all $k\ge 0$ .
  \item[(ii)]  The height function $u(\bx, s)=:\tau(\bF(\bx,s))$   satisfies $$\tau_1<\inf_{M\times[0,s_0)}u\le \sup_{M\times[0,s_0)}u<  \tau_2$$ for some $\tau_-<\tau_1<\tau_2<\tau_+$.

\end{enumerate}
 Then $\bF$ can be extended smoothly up to $s=s_0$ as a solution of the prescribed mean curvature flow so that the tilt factor and all the covariant derivatives of the second fundamental form  of $\bF^{s_0}(M)$ are uniformly bounded.

\end{lma}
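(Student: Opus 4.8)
The plan is to use the short time existence result, Theorem \ref{t-shorttime Lorentz} (in the form of Remark \ref{r-short time}), as a restarting device, but the immediate obstacle is that Theorem \ref{t-shorttime Lorentz} requires uniform bounds on the geometry of the \emph{initial} hypersurface, which here must be $\bF^s(M)$ for $s$ near $s_0$, \emph{uniformly in} $s$. So the core of the argument is to show that under hypotheses (i) and (ii) the flow $\bF$ already extends to a smooth limit $\bF^{s_0}$, with uniformly controlled geometry, by a parabolic bootstrapping argument near $s=s_0$. First I would fix $\tau_-<\tau_1<\tau_2<\tau_+$ as in (ii) and note that by (ii) the image of the flow lies in $\ol\Omega_{\tau_1',\tau_2'}$ for slightly larger $\tau_1'<\tau_1$, $\tau_2'>\tau_2$, where by \eqref{e-assumption-1} and the hypothesis $\ol\Ric\ge -c^2$ we have uniform bounds on $\a$, $\on\log\a$, $\on\bT$ up to one derivative, $\oRm$ to all orders, and $\cH$ to all orders; combined with (i) this gives that $M_s$ has, uniformly in $s<s_0$, bounded tilt factor and bounded $|\n^kA|$ for all $k$.

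Next I would localize. Using condition \eqref{e-rho} on the function $\rho$: for each fixed $\rho_0$ the set $K_{\rho_0}=\ol\Omega_{\tau_1',\tau_2'}\cap\{\rho\le\rho_0\}$ is compact, and since $|||\on\rho|||$ is finite on $\Omega_{\tau_1',\tau_2'}$ the quantity $\rho\circ\bF$ changes at a controlled rate along the flow (its $s$-derivative is $\la \bF_s,\on\rho\ra=(H-\cH)\la\vn,\on\rho\ra$, bounded by the uniform bounds on $H$, $\cH$, $\kappa$). Hence the flow on any fixed bounded-$\rho$ region stays, up to time $s_0$, in a slightly larger compact set of $N$. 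On such a compact region I can write the flow as a graph $t=w(\bx,s)$ over a fixed reference foliation (exactly the setup of Section \ref{ss-setup}, after identifying a neighborhood with $(-a,a)\times M$), and the evolution equation \eqref{e-w-1} becomes a uniformly parabolic quasilinear scalar PDE \eqref{e-F-2} whose coefficients, by Lemmas \ref{l-foliation-1}, \ref{l-foliation-2} and the uniform bounds above, have bounded $C^k$ norms on this region, uniformly in space and in $s<s_0$; moreover $1-|\n w|^2$ is bounded below by a fixed positive constant because the tilt factor is uniformly bounded. Interior Schauder estimates (as invoked in the proof of Theorem \ref{t-shorttime Lorentz}, via \cite{Friedman,Lieberman}) then give, on a slightly smaller region and for $s\in[s_0/2,s_0)$, uniform bounds on all space-time derivatives of $w$; since this holds on every bounded-$\rho$ region, $w(\cdot,s)$ converges in $C^\infty_{loc}$ as $s\to s_0$ to a smooth limit $w(\cdot,s_0)$, and the uniform-in-space bounds (coming from the quasi-coordinate formalism of Lemma \ref{l-bounded geometry}) persist in the limit. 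This defines the smooth extended immersion $\bF^{s_0}$, with $\bF^{s_0}(M)\subset\Omega_{\tau_1,\tau_2}$, tilt factor bounded, and $|\n^kA|$ bounded for all $k$; completeness of the induced metric on $\bF^{s_0}(M)$ follows since it is uniformly equivalent to $g(0)$ by Lemma \ref{l-foliation-2} as in the end of the proof of Theorem \ref{t-shorttime Lorentz}.

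Finally, with $\bF^{s_0}$ in hand as a complete noncompact spacelike hypersurface satisfying all the hypotheses of Theorem \ref{t-shorttime Lorentz} in the form of Remark \ref{r-short time} (note hypothesis (iii) of Theorem \ref{t-longtime}, $\inf_{M_0}\a>0$, together with \eqref{e-assumption-1} propagates to give $\inf\a>0$ on $M_{s_0}$ — or one simply invokes the $|||1/\a|||$ or $|||\on\log\a|||$ alternative of Remark \ref{r-short time}), I apply that theorem with $\bX_0$ replaced by $\bF^{s_0}$ to obtain a solution of \eqref{e-mcf-1} on $M\times[s_0,s_0+\e]$ for some $\e>0$, with bounded tilt and bounded $|\n^kA|$. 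Uniqueness of the prescribed mean curvature flow with these bounded-geometry properties (via the reduction of Lemmas \ref{l-reduction-1} and \ref{l-reduction-2} to a quasilinear parabolic scalar equation and standard uniqueness for such equations) lets me glue this to $\bF$ across $s=s_0$, producing the desired smooth extension past $s_0$; in particular $\bF$ extends smoothly \emph{up to} $s=s_0$ with the stated uniform bounds on $\bF^{s_0}(M)$. The main obstacle is the second paragraph: converting the hypotheses into genuinely \emph{uniform-in-$s$} (and uniform-in-space, via the quasi-coordinate charts) parabolic estimates near $s_0$, and making sure the $\rho$-localization is compatible with these charts so that the $C^\infty_{loc}$ limit is actually a smooth hypersurface of the required global type; the height bound (ii) is exactly what prevents the graph from degenerating (the reference foliation is only defined on a $\tau$-slab) and hypothesis (i) is exactly what feeds the Schauder machinery.
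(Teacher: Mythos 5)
There is a genuine gap in your second paragraph. You write that the uniform ambient bounds from \eqref{e-assumption-1} and the curvature lower bound, ``combined with (i),'' give that $M_s$ has \emph{uniformly in $s<s_0$} bounded tilt factor and bounded $|\n^kA|$ for all $k$. But hypothesis (i) of the lemma only asserts $\sup_{M\times[0,s]}\kappa<\infty$ and $\sup_{M\times[0,s]}|\n^kA|<\infty$ \emph{for each fixed} $s<s_0$; these quantities could degenerate as $s\uparrow s_0$. Obtaining the bounds uniformly up to $s_0$ is precisely the hard part of the lemma, and it is what your parabolic machinery needs as input: the scalar equation for $w$ is uniformly parabolic only if $1-|\n w|^2$ is bounded away from zero, which is equivalent to a uniform tilt bound. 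You cannot feed Schauder estimates without that, so the bootstrap you describe cannot produce it. The paper closes this gap in two steps which your proposal omits: first, Ecker's basic estimate (Lemma \ref{l-grad}), a maximum-principle argument applied to $\Phi=e^{\lambda u}\kappa^2+\mu(H-\cH)^2$ using the evolution equation for $\kappa$ (Lemma \ref{l-evolution-eqs}(viii)), the height bound (ii), and the ambient curvature/lapse bounds, yields $\sup_{M\times[0,s_0)}\kappa\le\kappa_1$; second, given the tilt bound, the Ecker--Huisken derivative estimates (Lemma \ref{l-A-est}) give $\sup_{M\times[0,s_0)}|\n^mA|\le c_m$ for every $m$. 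Only then does the $\rho$-localization plus bootstrap you describe go through.

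Two smaller remarks. First, once the uniform bounds are in hand, your route (write the flow locally as a graph $t=w(\bx,s)$ and apply interior Schauder) is a reasonable alternative to the paper's, which instead writes $\bF$ in coordinates on a compact region and directly integrates the evolution equations \eqref{e-longtime-1} and Lemma \ref{l-evolution-eqs}(v)--(vi) to control $\p_s^k\p^\b f^a$; both approaches work given the uniform bounds. Second, your last paragraph (restarting the flow past $s_0$ via Theorem \ref{t-shorttime Lorentz}, plus a uniqueness/gluing step) overshoots: the lemma only claims extension up to $s=s_0$ as a closed endpoint with controlled geometry there; the restart beyond $s_0$ belongs to the proof of Theorem \ref{t-longtime}, not of the lemma.
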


Assume the lemma is true, let us prove Theorem \ref{t-longtime}.
\begin{proof} By Theorem \ref{t-shorttime Lorentz} and by Remark \ref{r-short time}, there is $\e>0$ such that the prescribed mean curvature flow \eqref{e-mcf-1} has a solution $\bF$ on $M\times[0,\e]$ such that $\sup_{M\times[0,\e]}|\n^k A|<\infty$ for all $k\ge 0$, and $\sup_{M\times[0,\e]}\kappa<\infty$. Let $s_{\max}$ be as in the theorem and $s_{\max}<\infty$. Suppose
$$
\tau_-<\tau_1<\inf_{M\times[0,s_{\max})}\tau(\bF)\le \sup_{M\times[0,s_{\max})}\tau(\bF)<\tau_2<\tau_+.
$$
for some $\tau_1, \tau_2$. By Lemma \ref{l-extension}, $\bF$ can be extended smoothly as a solution to the prescribed mean curvature flow up to $s_{\max}$ so that all the derivatives of the second fundamental form and the tilt factor of $\bF(\cdot, s_{\max})$ are uniformly bounded. In particular, $\tau(\bF(\bx,s ))\to \tau(\bF(\bx,s_{\max}))$ as $s\uparrow s_{\max} $. Hence $\tau_1\le \tau(\bF(\bx,s_{\max}))\le \tau_2$ for all $\bx\in M$.
On the other hand,
\bee
\begin{split}
\log \a(\bF(\bx,s_{\max}))-\log \a(\bF(\bx,0))=&\int_0^{s_{\max}}\la \p_s\bF, \on \log\a \ra d\sigma\\
=&\int_0^{s_{\max}}\la (H-\cH)\vn, \on \log\a \ra d\sigma.
\end{split}
\eee
Since $|||\on\log \a||| $, $H-\cH$, and the tilt factor $\kappa$ are all uniformly bounded on $\Omega_{\tau_1,\tau_2}$, we have
\bee
\log \a(\bF(\bx,s_{\max}))\ge C\log \a(\bF(\bx,0))
\eee
for some $C>0$ independent of $\bx$. Since $\inf_M\a(\bF(\cdot,0))>0$, we conclude that
$\inf_M\a(\bF(\cdot,s_{\max}))>0$.  By Theorem \ref{t-shorttime Lorentz} and by Remark \ref{r-short time},  $\bF$ can be extended beyond $s_{\max}$ satisfying the conditions in Theorem \ref{t-longtime}. This contradicts the definition of $s_{\max}$. This completes the proof of the theorem.

\end{proof}

It remains to prove Lemma \ref{l-extension}. This follows from   the basic estimates obtained by Ecker and Husiken \cite{EckerHusiken1991,Ecker1993} and standard arguments as in \cite[Chapter 6]{AndrewsChowGuentherLangford}. Here we use the existence of $\rho$ to obtain $C^0$ convergence. For completeness, we sketch the proof.  First  recall the evolution equations of geometric quantities along the flow, see \cite{EckerHusiken1991,Ecker1993}:
 Let $\bF$ satisfies \eqref{e-mcf-1}.

\begin{lma}\label{l-evolution-eqs}
With the notation as in Theorem \ref{t-longtime}, let $g(s)$ be the induced metric on $M_s$ with unit future pointing unit normal $\vn$,
second fundamental form $A$, mean curvature $H$, height function $u$, tilt factor $\kappa$.
\begin{enumerate}
  \item[\bf(i)]
  $
  \displaystyle{\frac{\p}{\p s}g_{ij}=2 (H-\mathcal{H})A_{ij}.}
  $
  \item[\bf(ii)] $
\displaystyle{\pps g^{ij}=-2 (H-\mathcal{H})A^{ij}}.$

\item[\bf(iii)]
\bee
\begin{split}
\pps \Gamma_{ij}^k=&- g^{kl}\bigg[(H-\mathcal{H})(A_{jl;i}+A_{il;j}-A_{ij;l})\\
&+(H-\mathcal{H})_iA_{jl}
 +(H-\mathcal{H})_jA_{il}-(H-\mathcal{H})_lA_{ij}\bigg].
 \end{split}
 \eee
 Here $;$ is the covariant derivative of $g(s)$ and $\Gamma_{ij}^k$ is the connection.

 \item[\bf(iv)] $\dps{\pps \vn=    \nabla (H-\mathcal{H})}$.

 \item[\bf(v)]

\bee
\begin{split}
 (\pps-\Delta)A_{ij}
=&- (|A|^2+\ol{\Ric}(\vn,\vn))A_{ij}+2 HA_{ri}A^r_j\\
 &+2g^{kl}A_k^q\oR_{qijl}-g^{kl}A_j^q \oR_{qkli} -g^{kl}A_l^q\oR_{qikj}
 \\&+g^{kl}\on_l\oR_{0ijk}
-g^{kl}\on_j\oR_{0kli}\\
&-\cH(g^{kl}A_{ik}A_{jl}-\oR_{0ij0})
 -\cH_{;ij}.
\end{split}
\eee
Here  the index 0 in $\oR$ corresponds to $\vn$.
\item[\bf(vi)]
\bee
 (\pps-\Delta)(H-\cH)=-\lf(|A|^2+\ol{\Ric}(\vn,\vn)+\la \on\cH,\vn\ra\ri)(H-\cH).
 \eee
\item [\bf(vii)] $\pps u=\a^{-1}\kappa(H-\cH)$, $\Delta  u=\div (\on\tau)+\a^{-1}\kappa H$ and
  \bee
  (\pps-\Delta)u=-\cH \a^{-1}\kappa -\div\on \tau.
  \eee
  Here the divergence is on the immersed surface $M_s$.
  \item[\bf(viii)]
  \bee
(\pps -\Delta)\kappa = -\kappa(|A|^2+\Ric(\vn,\vn)) -\bT(H_\bT) -(H-\cH)\la \on_\vn \bT,\vn\ra+\la \bT,\n^M \cH \ra.
\eee
Here $\bT(H_\bT)$ is the variation of the mean curvature under the deformation of $\bT$. (See \cite[2.10]{Bartnik1984}).
\end{enumerate}

\end{lma}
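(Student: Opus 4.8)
The plan is to derive all eight identities by direct computation from the flow equation. Write $f=H-\cH$, so that \eqref{e-mcf-1} becomes $\pps\bF=f\vn$, let $\bF_i=\bF_*(\p_i)$ denote the coordinate tangent fields, $g_{ij}=\la\bF_i,\bF_j\ra$, and write $\on_s,\on_i$ for the ambient connection $\on$ pulled back along $\bF$ in the $\pps$ and $\p_i$ directions. The structural facts I will use are: $\on_s$ and $\on_i$ commute on $\bF$ itself; for any other field $V$ one has $\on_s\on_iV-\on_i\on_sV=\oRm(\pps\bF,\bF_i)V=f\,\oRm(\vn,\bF_i)V$; and the Gauss--Weingarten relations $\on_i\bF_j=\Gamma_{ij}^k\bF_k+A_{ij}\vn$, $\on_i\vn=A_i^k\bF_k$, normalised so that $\bH=H\vn$ and $A_{ij}=-\la\on_i\bF_j,\vn\ra=\la\on_i\vn,\bF_j\ra$. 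With these in hand, items (i)--(iv) and the first two identities of (vii) are immediate, (vi) will be obtained by tracing (v) with $g^{ij}$, and the two computations requiring genuine work are (v) and (viii).

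First I would dispatch the easy identities. For (i): $\on_s\bF_i=\on_i(f\vn)=f_i\vn+fA_i^k\bF_k$, so $\pps g_{ij}=\la\on_s\bF_i,\bF_j\ra+\la\bF_i,\on_s\bF_j\ra=2fA_{ij}$. Item (ii) follows from differentiating $g^{ik}g_{kj}=\delta^i_j$, and (iii) from differentiating $\Gamma_{ij}^k=\frac12 g^{kl}(\p_ig_{jl}+\p_jg_{il}-\p_lg_{ij})$, substituting (i)--(ii), and rewriting the coordinate derivatives of the tensor $2fA_{ij}$ covariantly (the non-tensorial pieces cancel). For (iv): $\la\on_s\vn,\vn\ra=\frac12\pps\la\vn,\vn\ra=0$ while $\la\on_s\vn,\bF_i\ra=-\la\vn,\on_s\bF_i\ra=f_i$, so $\on_s\vn=g^{ij}f_j\bF_i=\n f=\n(H-\cH)$. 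For (vii): $\on\tau=-\a^{-1}\bT$ and $\kappa=-\la\vn,\bT\ra$ give $\pps u=\la\on\tau,f\vn\ra=\a^{-1}\kappa f$; decomposing $\on\tau=\n^Mu-\a^{-1}\kappa\vn$ on $M_s$ and taking the surface divergence gives $\Delta u=\div(\on\tau)+\a^{-1}\kappa H$ (using $\sum_i\la\on_{e_i}\vn,e_i\ra=H$), and subtracting yields the evolution equation for $u$.

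The substantive step is (v). I would first compute the variation of the second fundamental form under the pure normal deformation directly: commuting $\on_s$ past $\on_i$ in $A_{ij}=-\la\on_i\bF_j,\vn\ra$, the commutator contributes $-f\la\oRm(\vn,\bF_i)\bF_j,\vn\ra=-f\oR_{0ij0}$, and expanding $\on_i\on_s\bF_j=\on_i(f_j\vn+fA_j^k\bF_k)$ and projecting onto $\vn$ gives $\pps A_{ij}=f_{;ij}+fg^{kl}A_{ik}A_{jl}-f\oR_{0ij0}$. I would then invoke the Lorentzian Simons-type identity expressing $\Delta A_{ij}$ in terms of $\n_i\n_jH$, the curvature contractions $g^{kl}A_k^q\oR_{qijl}$, $g^{kl}A_j^q\oR_{qkli}$, $g^{kl}A_l^q\oR_{qikj}$, the divergences $g^{kl}\on_l\oR_{0ijk}-g^{kl}\on_j\oR_{0kli}$, and the quadratic terms $Hg^{kl}A_{ik}A_{jl}$, $|A|^2A_{ij}$, $\oRic(\vn,\vn)A_{ij}$ --- this is exactly the computation in \cite{EckerHusiken1991,Ecker1993} when $\cH\equiv 0$. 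Subtracting, the $\n_i\n_jH$ terms cancel and, since $\Delta A_{ij}$ carries no explicit $\cH$, the $\cH$-contribution to $\pps A_{ij}$ organises itself into $-\cH(g^{kl}A_{ik}A_{jl}-\oR_{0ij0})-\cH_{;ij}$, which gives (v). Item (vi) is then the $g^{ij}$-trace of (v): using (ii) for $\pps g^{ij}$, the identity $g^{ij}\oR_{0ij0}=\oRic(\vn,\vn)$, $\pps\cH=f\la\on\cH,\vn\ra$ and the relation between the ambient and induced Laplacians of $\cH$, everything collapses to the linear equation $(\pps-\Delta)(H-\cH)=-(|A|^2+\oRic(\vn,\vn)+\la\on\cH,\vn\ra)(H-\cH)$.

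For (viii) I would follow Bartnik \cite[2.10]{Bartnik1984} and Ecker \cite{Ecker1993}. From (iv), $\pps\kappa=-\la\on_s\vn,\bT\ra-\la\vn,\on_{f\vn}\bT\ra=-\la\n(H-\cH),\bT\ra-(H-\cH)\la\on_\vn\bT,\vn\ra$. The induced Laplacian of $\kappa=-\la\vn,\bT\ra$ is then computed by letting the derivatives fall on $\vn$ --- which, via Codazzi and the Simons identity, produces $-\kappa(|A|^2+\oRic(\vn,\vn))$, the gradient term $\la\n H,\bT\ra$ (which cancels the one from $\pps\kappa$), and the term $\bT(H_\bT)$ recording the change of mean curvature as the reference field $\bT$ is deformed --- and on the ambient field $\bT$, producing the remaining $\on\bT$ and $\la\bT,\n^M\cH\ra$ contributions; assembling $(\pps-\Delta)\kappa$ gives the stated formula. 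The main obstacle throughout is the sign-and-index bookkeeping in (v) and (viii): because $M_s$ is spacelike with timelike normal ($\la\vn,\vn\ra=-1$), the Gauss equation and the signs of the curvature reaction terms differ from the Riemannian case, and one must commute covariant derivatives carefully so that $\oRm$ and its divergence appear in precisely the indicated contractions. Since the $\cH\equiv 0$ parts of (v) and (viii) are established in \cite{EckerHusiken1991,Ecker1993,Bartnik1984}, I would lean on those references for those parts and verify in detail only the $\cH$-dependent corrections.
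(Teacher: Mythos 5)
The paper does not supply its own proof of this lemma: it simply records the evolution equations as "recall" statements, citing \cite{EckerHusiken1991,Ecker1993} and, for item (viii), Bartnik \cite[2.10]{Bartnik1984}. Your reconstruction is the standard derivation one would extract from those references, and the structure is correct: items (i)--(iv) and the first two equalities in (vii) by direct differentiation using $\pps\bF=(H-\cH)\vn$ and the Gauss--Weingarten relations (with $\la\vn,\vn\ra=-1$), item (vi) by tracing, and items (v) and (viii) by combining the normal-variation formula for $A_{ij}$ with the Lorentzian Simons identity, respectively Bartnik's computation for $\kappa$.

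One small point worth flagging in your bookkeeping: from (i), $\pps g_{ij}=2(H-\cH)A_{ij}$, and the standard tensorial identity
\begin{equation*}
\pps\Gamma_{ij}^k=\tfrac12 g^{kl}\bigl(\n_i(\pps g_{jl})+\n_j(\pps g_{il})-\n_l(\pps g_{ij})\bigr)
\end{equation*}
then produces $\pps\Gamma_{ij}^k=+\,g^{kl}\bigl[(H-\cH)(A_{jl;i}+A_{il;j}-A_{ij;l})+\cdots\bigr]$, \emph{without} the overall minus sign that appears in the paper's statement of (iii). Your sketch ("the non-tensorial pieces cancel") is right, but since you are the one doing the computation, you should carry the sign through; as stated, (iii) in the paper appears to have a spurious minus sign relative to (i). This does not affect the downstream use of (iii) in the paper (only boundedness of $\Gamma(s)-\Gamma(0)$ is used), but it is exactly the kind of sign-and-index bookkeeping you correctly identify as the main pitfall in (v) and (viii), so it deserves the same care.
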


By \cite{Bartnik1984}, we have the following facts:
\begin{lma}\label{l-grad kappa}
Let $M$ be a spacelike hypersurface with tilt factor $\kappa$ and let $u=\tau|_M$. Then
\bee
|\n^Mu|^2=\alpha^{-2}(\kappa^2-1).
\eee
For any $ 1>\delta>0$,
\bee
|\n_M\kappa|^2\le (1+\frac\delta n )^{-1}\kappa^2|A|^2+C(\kappa^4+\kappa^2H^2),
 \eee
 for some $C=C(|||\on\bT|||, n, (1-\delta)^{-1})$.

\end{lma}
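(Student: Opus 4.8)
The plan is to carry out, at a fixed point $p\in M$, the pointwise computation of Bartnik \cite{Bartnik1984}, using a $g$-orthonormal frame $e_1,\dots,e_n$ of $T_pM$ together with the future unit normal $\vn$, so that $\la\vn,\vn\ra=-1$, $\la\vn,e_i\ra=0$ and $\la\on_{e_i}\vn,e_j\ra=A_{ij}$.

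First I would settle the identity for $|\n^M u|^2$. Since $\bT=-\a\on\tau$ we have $\on\tau=-\a^{-1}\bT$, and $\n^M u$ is just the tangential projection of $\on\tau$, so $\n^M u=-\a^{-1}\bT^\top$, where $\bT^\top$ is the orthogonal projection of $\bT$ onto $T_pM$. Writing $\bT=\bT^\top+\lambda\vn$ and pairing with $\vn$ gives $\lambda=-\la\bT,\vn\ra=\kappa$; then $-1=\la\bT,\bT\ra=|\bT^\top|^2-\kappa^2$, so $|\bT^\top|^2=\kappa^2-1$ and $|\n^M u|^2=\a^{-2}|\bT^\top|^2=\a^{-2}(\kappa^2-1)$.

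For the gradient bound I would differentiate $\kappa=-\la\vn,\bT\ra$ along $e_i$. Because $\la\vn,\vn\ra$ is constant, $\on_{e_i}\vn$ is tangent to $M$, so $\la\on_{e_i}\vn,\bT\ra=\la\on_{e_i}\vn,\bT^\top\ra=A_{ij}(\bT^\top)^j$, and hence
\be
\n_i\kappa=-A_{ij}(\bT^\top)^j-\la\vn,\on_{e_i}\bT\ra .
\ee
If $\bT^\top=0$ at $p$, then $\kappa$ equals its minimum value $1$ there, so $\n\kappa=0$ and the estimate is trivial; otherwise choose the frame so that $e_n=\bT^\top/|\bT^\top|$, giving $A_{ij}(\bT^\top)^j=|\bT^\top|A_{in}$ with $|\bT^\top|^2=\kappa^2-1$. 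Young's inequality then yields, for any $\e>0$,
\be
|\n^M\kappa|^2\le(1+\e)(\kappa^2-1)\sum_i A_{in}^2+(1+\e^{-1})\sum_i\la\vn,\on_{e_i}\bT\ra^2 .
\ee
The second sum is an error term controlled by $|||\on\bT|||$: by Lemma \ref{l-tilt-equivalent} one has $|||\vn|||^2\le 2\kappa^2$ and $\sum_i|||e_i|||^2=n+2(\kappa^2-1)\le n\kappa^2$, so after estimating $\la\vn,\on_{e_i}\bT\ra$ in the reference metric and using $|||\on\bT|||\le c$ it is bounded by $C(n,c)\,\kappa^4$.

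The heart of the argument is the first sum $\sum_i A_{in}^2$, which is where the factor $(1+\delta/n)^{-1}$ and the $H^2$ correction are produced. I would decompose $A$ into the scalar $A_{nn}$, its off-diagonal $n$-th row, and the complementary $(n-1)\times(n-1)$ block $A'$; from $|A|^2=A_{nn}^2+2\sum_{i<n}A_{in}^2+|A'|^2$ and the Cauchy--Schwarz bound $|A'|^2\ge(\tr A')^2/(n-1)=(H-A_{nn})^2/(n-1)$ one gets
\be
\sum_i A_{in}^2=\tfrac12|A|^2+\tfrac12 A_{nn}^2-\tfrac12|A'|^2\le\tfrac12|A|^2+\tfrac12 A_{nn}^2-\frac{(H-A_{nn})^2}{2(n-1)} ,
\ee
and then a short analysis of the resulting quadratic form in the variables $A_{nn}$ and $H-A_{nn}$ (using also $|A|^2\ge A_{nn}^2+(H-A_{nn})^2/(n-1)$) shows that for every $0<\delta<1$,
\be
\sum_i A_{in}^2\le\left(1+\frac\delta n\right)^{-1}|A|^2+C(n,\delta)\,H^2 .
\ee
Feeding this back into the Young estimate, and choosing $\e$ small (after first proving the previous line with $\delta$ replaced by a slightly larger $\delta'<1$) so that $(1+\e)(1+\delta'/n)^{-1}\le(1+\delta/n)^{-1}$, gives
\be
|\n^M\kappa|^2\le\left(1+\frac\delta n\right)^{-1}\kappa^2|A|^2+C\big(\kappa^4+\kappa^2H^2\big) ,
\ee
with $C=C(|||\on\bT|||,n,(1-\delta)^{-1})$, as claimed. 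I expect the only real difficulty to be the bookkeeping in this last step: keeping the coefficient of $\kappa^2|A|^2$ exactly equal to $(1+\delta/n)^{-1}$ while dumping all of the slack into the harmless terms $\kappa^4$ and $\kappa^2H^2$, which is the technical point of Bartnik's estimate.
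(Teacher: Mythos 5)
Your proof is correct, and it reconstructs Bartnik's argument, which the paper only cites (``By \cite{Bartnik1984}, we have the following facts'') without reproducing. The first identity $|\n^M u|^2 = \a^{-2}(\kappa^2-1)$ is exactly as you compute. For the gradient bound, your decomposition $\n_i\kappa = -A_{ij}(\bT^\top)^j - \la\vn,\on_{e_i}\bT\ra$, the choice $e_n \parallel \bT^\top$, and the Young inequality split are all the right moves, and the ``short analysis'' of the quadratic form $\sum_i A_{in}^2 \le (1+\delta'/n)^{-1}|A|^2 + C(n,\delta')H^2$ does check out: writing $a=A_{nn}$, $b=H-A_{nn}$, $\gamma=n/(n+\delta')$ and using $|A|^2 \ge a^2 + b^2/(n-1)$, one is left with showing $(1-\gamma)a^2 - \tfrac{\gamma}{n-1}b^2 \le C(a+b)^2$, and the discriminant condition reduces to $n\gamma > n-1$, i.e.\ $\delta' < n/(n-1)$, which holds since $\delta'<1$; the resulting $C$ is bounded in terms of $n$ and $(1-\delta)^{-1}$ because $n-(n-1)\delta' \ge n(1-\delta')$. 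Your final step of taking $\delta'\in(\delta,1)$ (e.g.\ $\delta'=(1+\delta)/2$) and then $\e$ with $(1+\e)=\tfrac{1+\delta'/n}{1+\delta/n}$ to absorb the Young factor is exactly what is needed, and it produces $\e^{-1} = O\big(n/(1-\delta)\big)$, consistent with the stated dependence $C = C(|||\on\bT|||, n, (1-\delta)^{-1})$. No gaps.
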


Since all the covariant derivatives of the curvature of $(M,g_0)$ are bounded, we can find a smooth function $\eta$ with bounded gradient and  bounded Hessian, such that $ d(\bx)+C\le \eta(\bx)\le 2d(\bx)+C'$ where $d(\bx)$ is the distance function with respect to $g_0$ from a fixed point $\bx_0$, see  \cite{Shi97,Tam2010}.

 By Lemma \ref{l-evolution-eqs} (i)--(iii) we have:
 \begin{lma}\label{l-exhaustion} Let $\bF$ be a solution of \eqref{e-mcf-1} on $M\times[0,s_0]$.  Suppose $|A|, |\n A|$ are uniformly bounded on $M\times[0,s_0]$, then
  gradient and the Hessian of $\eta$ with respect to $g(s)$ are uniformly bounded.
 \end{lma}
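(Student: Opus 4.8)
The plan is to reduce everything to the corresponding statement for the fixed metric $g_0=g(0)$, using two ingredients: first, that the evolving metrics $g(s)$ are uniformly equivalent to $g_0$ on $M\times[0,s_0]$; and second, that the difference of the Levi--Civita connections of $g(s)$ and $g_0$ is uniformly bounded. Since $\eta$ is chosen once and for all with $\n^{g_0}\eta$ and $(\n^{g_0})^2\eta$ bounded, the assertion then follows by transfer of norms.

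For the metric equivalence: since $H=\tr_{g(s)}A$ we have $|H|\le\sqrt n\,|A|$, which is bounded by hypothesis, and $\cH$ is bounded on the region $\Omega_{\tau_1,\tau_2}$ through which the flow runs, by \eqref{e-assumption-1}; hence $|H-\cH|\le C_1$ on $M\times[0,s_0]$. For a fixed $V\in T_pM$, Lemma \ref{l-evolution-eqs}(i) gives $\pps\big(g(s)(V,V)\big)=2(H-\cH)A(s)(V,V)$, whence $\big|\pps\log g(s)(V,V)\big|\le 2|H-\cH|\,|A|_{g(s)}\le C_2$; integrating over $[0,s_0]$ yields $e^{-C_2s_0}g_0\le g(s)\le e^{C_2s_0}g_0$, together with the analogous two--sided comparison for the inverse metrics. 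Consequently the $g(s)$--norm and the $g_0$--norm of any fixed tensor field on $M$ are comparable, uniformly in $s$. In particular $|\n^{g(s)}\eta|_{g(s)}^2=g(s)^{ij}\eta_i\eta_j$ is bounded by a constant times $g_0^{ij}\eta_i\eta_j=|\n^{g_0}\eta|_{g_0}^2$, which settles the gradient bound.

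For the connection difference: since $\eta$ does not depend on $s$, in local coordinates
\bee
(\n^{g(s)})^2\eta-(\n^{g_0})^2\eta=-\big(\Gamma(s)^k_{ij}-\Gamma(0)^k_{ij}\big)\eta_k,
\eee
and $\Gamma(s)-\Gamma(0)=\int_0^s\pps\Gamma\,d\sigma$. By Lemma \ref{l-evolution-eqs}(iii) the integrand is a finite sum of terms of the schematic form $g^{-1}\ast(H-\cH)\ast\n A$ and $g^{-1}\ast\n(H-\cH)\ast A$, where the $g^{-1}$ only raises an index; measuring in $g(\sigma)$ we obtain $|\pps\Gamma|_{g(\sigma)}\le C\big(|H-\cH|\,|\n A|_{g(\sigma)}+|\n(H-\cH)|_{g(\sigma)}\,|A|_{g(\sigma)}\big)$. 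Now $\n_kH=g^{ij}\n_kA_{ij}$, so $|\n H|_{g(\sigma)}\le\sqrt n\,|\n A|_{g(\sigma)}$ is bounded, and the tangential gradient $\n^{M}\cH$ along $M_\sigma$ is controlled by $|||\on\cH|||$ and the tilt factor, using Lemma \ref{l-tilt-equivalent} to compare $|||\vn|||$ with the $G_E$--norm with respect to $\bT$. Hence $|\pps\Gamma|_{g(\sigma)}$, and therefore $|\pps\Gamma|_{g_0}$ by the metric equivalence, is bounded by a fixed constant, so $|\Gamma(s)-\Gamma(0)|_{g_0}\le Cs_0$. Inserting this and the bound on $|\n^{g_0}\eta|_{g_0}$ into the displayed identity bounds $|(\n^{g(s)})^2\eta|_{g_0}$, and hence $|(\n^{g(s)})^2\eta|_{g(s)}$, uniformly on $M\times[0,s_0]$.

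The only step I expect to require genuine care is the bound on $\n(H-\cH)$: one must check that $|\n A|<\infty$ really controls $\n H$ and that the ambient bound on $\cH$, together with the bounded tilt factor, controls the tangential gradient $\n^{M}\cH$ uniformly in $s$. Once that and the metric equivalence are in place, the remainder is routine bookkeeping with the evolution equations of Lemma \ref{l-evolution-eqs}.
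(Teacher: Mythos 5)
Your proof is correct and matches the paper's (implicit) argument: the paper simply says the lemma follows from Lemma \ref{l-evolution-eqs}(i)--(iii), which is exactly the route you take — metric equivalence from (i)/(ii) and the connection difference from (iii), followed by transfer of norms from $g_0$ to $g(s)$. You correctly notice that to close the argument one also needs bounds on $|H-\cH|$ and on the tangential gradient $\n^M\cH$ (hence on $\cH$, $|||\on\cH|||$, and the tilt factor); these are not listed in the lemma's hypotheses but are always in force in the surrounding context (Lemma \ref{l-extension}, Lemma \ref{l-grad}, Theorem \ref{t-convergence}) where the lemma is applied, so this is an imprecision in the paper's statement rather than a gap in your proof.
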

 Using $\eta$, we have the following maximum principle.
 \begin{lma}\label{l-max} Let $\bF$  and $\eta$ be as in Lemma \ref{l-exhaustion}. Suppose $f$ is a smooth function on $M\times[0,s_0]$ such that  $f\le C_1e^{C_2 \eta}$  for some $C_1, C_2>0$ and  $(\pps-\Delta)f\le 0$ whenever $f>0$. Assume $\sup_{M_0}f<\infty$. Then $\sup_{M\times[0,s_0]}f\le \max\{0,\sup_{M_0}f\}.$

 \end{lma}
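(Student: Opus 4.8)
The plan is to reduce the statement to the standard maximum principle on compact sets by using the exhaustion function $\eta$ from Lemma \ref{l-exhaustion} together with a cutoff in the auxiliary function, exactly in the spirit of the Omori--Yau type argument used by Ecker--Huisken. First I would fix $\e>0$ and, for a constant $C_3>0$ to be chosen, consider the function
$$
f_\e=f-\e e^{C_3 s}\big(1+e^{C_2\eta}\big)
$$
on $M\times[0,s_0]$. Since $f\le C_1 e^{C_2\eta}$ by hypothesis, for $\bx$ outside a sufficiently large (compact) geodesic ball of $(M,g_0)$ we have $f_\e<0$; on that fixed compact ball $\eta$ and hence $e^{C_2\eta}$ are bounded, so $f_\e$ attains its maximum over $M\times[0,s_0]$ at some point $(\bx_0,s_0')$, and after possibly enlarging the ball we may assume the maximum is attained there. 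If $f_\e\le 0$ everywhere there is nothing to prove for this $\e$, so assume $f_\e(\bx_0,s_0')>0$; then in particular $f(\bx_0,s_0')>0$.

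The key step is the differential inequality at the maximum point. If $s_0'>0$, then at $(\bx_0,s_0')$ one has $\partial_s f_\e\ge 0$, $\Delta f_\e\le 0$, hence
$$
0\le (\pps-\Delta)f_\e=(\pps-\Delta)f-\e e^{C_3 s}\Big(C_3+(C_3-\Delta\eta-C_2|\n\eta|^2)\,C_2 e^{C_2\eta}\Big).
$$
By Lemma \ref{l-exhaustion} the quantities $|\n\eta|$ and $|\Delta\eta|$ (the trace of the Hessian with respect to $g(s)$) are uniformly bounded on $M\times[0,s_0]$; choosing $C_3$ larger than $\sup(|\Delta\eta|+C_2|\n\eta|^2)$ makes the parenthetical factor strictly positive, so the subtracted term is strictly negative. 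Since $f(\bx_0,s_0')>0$ we have $(\pps-\Delta)f\le 0$ there by hypothesis, giving $0\le (\text{something}<0)$, a contradiction. Therefore the maximum of $f_\e$ is attained at $s=0$, i.e.
$$
\sup_{M\times[0,s_0]}f_\e=\sup_{M_0}\big(f-\e(1+e^{C_2\eta})\big)\le\sup_{M_0}f^+,
$$
where $f^+=\max\{f,0\}$. Hence for every $(\bx,s)$,
$$
f(\bx,s)\le \e e^{C_3 s}\big(1+e^{C_2\eta(\bx)}\big)+\sup_{M_0}f^+ ,
$$
and letting $\e\to 0$ (with $(\bx,s)$ fixed) yields $f(\bx,s)\le\max\{0,\sup_{M_0}f\}$, which is the claim.

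The main obstacle is making sure the perturbed function $f_\e$ genuinely attains an interior-in-space maximum so that the calculus inequalities $\partial_s f_\e\ge0$, $\Delta f_\e\le 0$ are available; this is precisely what the growth bound $f\le C_1 e^{C_2\eta}$ together with the coercive penalty term $-\e e^{C_3 s}e^{C_2\eta}$ is designed to guarantee, and it uses in an essential way that $\eta$ is proper (being comparable to the distance function of the complete metric $g_0$) and has bounded gradient and Hessian along the flow, as supplied by Lemma \ref{l-exhaustion}. A minor point to be careful about is the case $s_0'=0$, which is harmless since it directly gives $f_\e\le\sup_{M_0}f^+$, and the fact that one only needs $(\pps-\Delta)f\le0$ at points where $f>0$, which holds at $(\bx_0,s_0')$ because $f_\e>0$ forces $f>0$ there.
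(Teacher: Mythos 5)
Your argument is structurally the same as the paper's (introduce a coercive penalty built from $\eta$, use the one-sided derivative test at an interior spacetime maximum, let $\e\to0$), but there is a real gap in the coercivity step: your penalty grows at the \emph{same} exponential rate as the a priori bound on $f$, which is not enough.

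Concretely, you set $f_\e=f-\e e^{C_3 s}(1+e^{C_2\eta})$ and claim that the hypothesis $f\le C_1 e^{C_2\eta}$ forces $f_\e<0$ outside a compact set. But
$$
f_\e \le C_1 e^{C_2\eta}-\e e^{C_3 s}\lf(1+e^{C_2\eta}\ri)\le (C_1-\e e^{C_3 s})\,e^{C_2\eta}-\e e^{C_3 s},
$$
and since $e^{C_3 s}\ge 1$ the coefficient $(C_1-\e e^{C_3 s})$ is \emph{positive} for all sufficiently small $\e$ (which is exactly the regime you eventually send $\e\to0$ in). Thus $f_\e$ is not bounded above by this estimate, the supremum of $f_\e$ over $M\times[0,s_0]$ need not be attained, and the calculus inequalities $\p_s f_\e\ge0$, $\Delta f_\e\le0$ are not available. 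The paper avoids this by taking the penalty to be $\phi=e^{\sigma s+2C_2\eta}$, so that for each fixed $\e>0$ one has $f-a-\e\phi\le C_1 e^{C_2\eta}-a-\e e^{2C_2\eta}\to-\infty$ as $\eta\to\infty$; the strictly larger exponent $2C_2$ is essential. Replacing your $e^{C_2\eta}$ in the penalty by $e^{2C_2\eta}$ (or any $e^{C_2'\eta}$ with $C_2'>C_2$) repairs the argument, and then your maximum-point computation goes through as you wrote it (modulo a harmless slip in the exact coefficients: $(\pps-\Delta)e^{C_2'\eta}=-e^{C_2'\eta}(C_2'\Delta\eta+(C_2')^2|\n\eta|^2)$, still uniformly bounded by Lemma \ref{l-exhaustion}, so choosing $C_3$ large enough still gives $(\pps-\Delta)(\e e^{C_3 s}e^{C_2'\eta})>0$).
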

 \begin{proof} Let $a=\sup_{M_0}f$ and let $\phi=e^{\sigma s+2C_2\eta}$ where $\sigma$ is to be determined. By Lemma \ref{l-exhaustion}, we have
 \bee
 (\pps-\Delta)\phi\ge (\sigma-C_3)\phi
 \eee
 for some $C_3>0$ in $M\times[0,s_0]$. Choose $\sigma=2C_3$, we have $(\pps-\Delta)\phi>0$. For any $\e>0$,
 \bee
 (\pps-\Delta)(f-a-\e\phi)<0
 \eee
 whenever $f>0$.   $\sup_{M\times[0,s_0]}(f-a-\e\phi)$ will be attained at some point in $(\bx,s)\in M\times[0,s_0]$. If $s>0$, we must have $f\le0$ at this point. So $\sup_{M\times[0,s_0]}(f-a-\e\phi)\le \max\{-a, 0\}$.  Let $\e\to0$, the result follows.
 \end{proof}

The following is a basic estimate obtained by Ecker \cite{Ecker1993}. We modify the proof a little bit using the idea in \cite{Bartnik1984}. The assumptions are slightly different from those in \cite{Ecker1993}. We give details of the proof in order to understand what conditions are sufficient.
\begin{lma}\label{l-grad}  [Ecker's basic estimate]
Let $\bF$ be a solution to the prescribed mean curvature flow \eqref{e-mcf-1}  such that  $\tau_-<\tau_1< \tau(\bF)<\tau_2<\tau_+$.

\begin{enumerate}
  \item [(i)] $\sup_{M\times[0,s_0]}(\kappa+|A|+|\n A|)<\infty$.

  \item[(ii)] $|||\n \bT|||_{1,\Omega_{\tau_1,\tau_2}}$, $ |||\cH|||_{1,\Omega}$, $|||\oRm|||_{1,\Omega_{\tau_1,\tau_2}}$, $|||\on \log\a|||_{1,\Omega_{\tau_1,\tau_2}}$, $|||\a|||_{1,\Omega_{\tau_1,\tau_2}}$ are uniformly bounded by $c_1$.
      \item[(iii)] $\ol\Ric(w,w)\ge -c_2$ for some $c_2\ge0$ for all time like unit vector $w$ in $\Omega_{\tau_1,\tau_2}$.
\end{enumerate}
Then there exist $\lambda>0, \mu>0, C$ depending only on $c_1, c_2, \tau_1, \tau_2, n$ with $C$ also depending on $\sup_{M_0}(\kappa+|H-\cH|)$ such that
\bee
\sup_{M\times[0,s_0]}(e^{\lambda u}\kappa^2+\mu(H-\cH)^2)\le C.
\eee
\end{lma}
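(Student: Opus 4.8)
The plan is to apply the maximum principle of Lemma \ref{l-max} to
$$f=e^{\lambda u}\kappa^2+\mu(H-\cH)^2-C$$
on $M\times[0,s_0]$ for suitable $\lambda,\mu>0$ and $C$. Hypothesis (i) already forces $\kappa,|A|$ (hence $|H|\le n|A|$), and $|\n A|$ to be bounded on $M\times[0,s_0]$, so $f$ is bounded there and in particular $f\le C_1e^{C_2\eta}$ trivially, while Lemma \ref{l-exhaustion} furnishes the exhaustion function $\eta$ with $g(s)$-bounded gradient and Hessian; the assumptions on $M_0$ give $\sup_{M_0}(e^{\lambda u}\kappa^2+\mu(H-\cH)^2)<\infty$, so choosing $C$ at least this quantity makes $\sup_{M_0}f\le 0$. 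It then suffices to prove $(\pps-\Delta)f\le 0$ wherever $f>0$, for Lemma \ref{l-max} then yields $f\le 0$, which is the claimed estimate. The point of routing through Lemma \ref{l-max} rather than using the a priori bound from (i) is that $C$ will depend only on $c_1,c_2,\tau_1,\tau_2,n$ and $\sup_{M_0}(\kappa+|H-\cH|)$, and in particular not on $s_0$ nor on $\inf\a$.

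For the differential inequality I would compute $(\pps-\Delta)$ of each piece from Lemma \ref{l-evolution-eqs}. From (vi),
$$(\pps-\Delta)(H-\cH)^2=-2\big(|A|^2+\oRic(\vn,\vn)+\la\on\cH,\vn\ra\big)(H-\cH)^2-2|\n(H-\cH)|^2;$$
from (viii) together with $(\pps-\Delta)\kappa^2=2\kappa(\pps-\Delta)\kappa-2|\n\kappa|^2$; and from (vii), which governs the evolution of $u$ when $(\pps-\Delta)$ is applied to the weight $e^{\lambda u}$, producing the crucial negative term $-\lambda^2e^{\lambda u}|\n u|^2=-\lambda^2e^{\lambda u}\a^{-2}(\kappa^2-1)$ (using Lemma \ref{l-grad kappa}) together with a cross term $-4\lambda\kappa e^{\lambda u}\,\n u\cdot\n\kappa$. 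The genuinely good terms are $-2\kappa^2|A|^2$, $-\lambda^2e^{\lambda u}\a^{-2}(\kappa^2-1)$, and $-2\mu|A|^2(H-\cH)^2$; using $|A|^2\ge\tfrac1nH^2\ge\tfrac1{2n}(H-\cH)^2-\tfrac1n\cH^2$ the last becomes a dominant $-\tfrac\mu n(H-\cH)^4$, and since $\a\le c_1$, for $\kappa$ large the term $-\lambda^2e^{\lambda u}\a^{-2}(\kappa^2-1)$ dominates any error of the shape $\a^{-1}\kappa^m$ or $\a^{-2}\kappa^m$ with $m\le 4$ provided $\lambda$ is taken large. At a point with $f>0$ one has $e^{\lambda u}\kappa^2+\mu(H-\cH)^2>C$, so (as $e^{\lambda u}$ is pinched between $e^{\lambda\tau_1}$ and $e^{\lambda\tau_2}$) either $\kappa$ or $|H-\cH|$ is large, and in either regime the corresponding quartic negative term will swallow the lower-order contributions.

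The main work, and the main obstacle, is the bookkeeping showing that every remaining term is absorbable and fixing $\delta,\mu,\lambda,C$ in the right order. The curvature contributions are handled by $\oRic(\vn,\vn)\ge -c_2$, costing only $O(c_2(\kappa^2+(H-\cH)^2))$. The cross term $-4\lambda\kappa e^{\lambda u}\n u\cdot\n\kappa$ is controlled via Lemma \ref{l-grad kappa}: its parameter $\delta$ is fixed small precisely so that $|\n\kappa|^2\le(1+\tfrac\delta n)^{-1}\kappa^2|A|^2+C(\kappa^4+\kappa^2H^2)$, after one Young's inequality, leaves a definite margin against the combination of $-2\kappa^2|A|^2$, $-2|\n\kappa|^2$ and $-\lambda^2e^{\lambda u}\a^{-2}(\kappa^2-1)$ — this is where "the idea of Bartnik" enters. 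The coupling terms, such as $\kappa(H-\cH)^2$ (from $\la\on\cH,\vn\ra$, bounded by $C\kappa$) and $\kappa^2|H-\cH|$ (from the $\vn$- and $\bT$-variation terms in (viii)), are tamed by then choosing $\mu$ small, using $\kappa^2(H-\cH)^2\le 2n\kappa^2|A|^2+2c_1^2\kappa^2$ so that $\mu$-multiples fit under $-2\kappa^2|A|^2$; and the $\div\on\tau$ and $\cH\a^{-1}\kappa$ pieces from the $u$-evolution are absorbed into $-\lambda^2e^{\lambda u}\a^{-2}(\kappa^2-1)$ by taking $\lambda$ large, the relevant ratio being $O(\lambda^{-1})$ because $\a\le c_1$. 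Once $\delta$, then $\mu$, then $\lambda$, then $C$ are fixed in this order, one obtains $(\pps-\Delta)f<0$ wherever $f>0$, and Lemma \ref{l-max} finishes the proof.
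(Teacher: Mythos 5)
Your overall strategy matches the paper's: form $\Phi=e^{\lambda u}\kappa^2+\mu(H-\cH)^2$, compute $(\pps-\Delta)\Phi$ from Lemma \ref{l-evolution-eqs}, use Lemma \ref{l-grad kappa} to trade $\kappa^2|A|^2$ for $|\n\kappa|^2$ and complete the square against the cross term $-4\lambda\kappa e^{\lambda u}\la\n u,\n\kappa\ra$, produce negative quartics $-\kappa^4$ and $-(H-\cH)^4$, and close with an exhaustion-function maximum principle. Routing the final step through Lemma \ref{l-max} applied to $\Phi-C$ rather than through a two-case analysis at a maximum of $\Phi-\e\phi$ is a cosmetic difference.

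However, your prescription ``choose $\mu$ small, then $\lambda$ large'' is backwards and will not close the argument. The obstruction is the $\kappa^2H^2$ error produced by Lemma \ref{l-grad kappa}: after multiplication by $e^{\lambda u}$ and the splitting $\kappa^2H^2\le \kappa^4+(H-\cH)^4+2\cH^2\kappa^2$, one is left with a term of size $C e^{\lambda u}(H-\cH)^4\le Ce^{\lambda\tau_2}(H-\cH)^4$. The only available negative quartic in $(H-\cH)$ is $-\tfrac{\mu}{2n}(H-\cH)^4$ coming from $\mu|A|^2(H-\cH)^2\ge\tfrac{\mu}{2n}(H-\cH)^4-\tfrac{\mu}{n}\cH^2(H-\cH)^2$. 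Absorbing $Ce^{\lambda\tau_2}(H-\cH)^4$ therefore forces $\mu\gtrsim e^{\lambda\tau_2}$, i.e.\ $\mu$ must be chosen \emph{large}, and chosen \emph{after} $\lambda$. Indeed the paper sets $\tfrac{\mu}{2n}-C_3e^{\lambda\tau_2}=1$ precisely for this reason. If you instead fix a small $\mu$ first, then any subsequent increase of $\lambda$ makes $Ce^{\lambda\tau_2}(H-\cH)^4$ swamp $-\tfrac{\mu}{2n}(H-\cH)^4$, and there is no other term that can absorb it: Young's inequality only shifts the weight onto the $\kappa^4$ piece at the cost of reintroducing a large coefficient on $(H-\cH)^4$, so the circularity persists.

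A secondary slip: you propose to absorb the $\mu\kappa(H-\cH)^2$-type couplings ``under $-2\kappa^2|A|^2$.'' But after the Lemma \ref{l-grad kappa} trade, the $-\kappa^2|A|^2$ term has already been converted entirely into $-|\n\kappa|^2$ (this is exactly how the cross term with $\n u$ is controlled), so it is no longer available as a spare good term. The paper instead controls those couplings by the inequality $|H-\cH|\le |H|+|\cH|\le \sqrt n|A|+c_1$ together with a Cauchy--Schwarz that burns a small fraction $\e$ of the $\kappa^2|A|^2$ \emph{before} the trade, i.e.\ the parameter $\e$ appears as $-2(1-\e)\kappa^2|A|^2$ and $+2C\e^{-1}\kappa^4$, and $\e,\delta$ are fixed first so that $(1-\e)(1+\tfrac{\delta}{n})\ge 1+\tfrac{1}{4n}$. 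Getting the order $(\e,\delta)$, then $\lambda$, then $\mu$ (large), then $C$ is not bookkeeping garnish; it is where the proof actually closes.
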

\begin{proof}
Let $u=\tau(\bF)$. Then $\tau_1<u<\tau_2$.   By Lemma  \ref{l-evolution-eqs}(viii)

\be\label{e-expu}
\begin{split}
(\pps-\Delta)e^{\lambda u}
=&\lambda e^{\lambda u}\lf(-\cH\alpha^{-1}\kappa-\div_M\on\tau  -\lambda |\n u|^2\ri)\\
\le &\lambda e^{\lambda u}\lf(-\cH\alpha^{-1}\kappa+\kappa^2|||\on(\alpha^{-1}\bT) |||  -\lambda |\n u|^2\ri)\\
\le&\lambda e^{\lambda u}\lf(C_1\alpha^{-1}\kappa^2-\lambda |\n u|^2\ri)
\end{split}
\ee
for some $C_1=C_1(c_1, n)$ because $\on \tau=-\a^{-1}\bT$, $\on \a^{-1}=-\a^{-1}\on \log \a$ and $\kappa\ge 1$.
Using the facts that
 \bee
| \la \on_\vn \bT,\vn\ra|\le 2\kappa^2|||\on\bT|||;\  |\la\bT,\n \cH\ra|\le 2\kappa|||\on \cH|||,
\eee
 and
 \bee
 |\bT(H_\bT)|\le C(n)\lf(|||\on^2\bT|||\kappa^3+|||\on \bT||| \kappa^2|A|\ri)
 \eee
 by Bartnik \cite[2.10]{Bartnik1984}, using Lemma \ref{l-evolution-eqs} we have
  for any $\e>0$:

\bee
\begin{split}
(\pps -\Delta)\kappa^2=&2\kappa(\pps-\Delta)\kappa-2|\n \kappa|^2\\
\le &-2(1-\e)\kappa^2|A|^2+2C_2\e^{-1}\kappa^4-2|\n\kappa|^2\\
\end{split}
\eee
for some $C_2=C_2(c_1, n)$.
On the other hand, by Lemma \ref{l-grad kappa}, we have for $1>\delta>0$,
\bee
|\n\kappa|^2\le (1+\frac\delta n)^{-1}\kappa^2|A|^2+C(\kappa^4+\kappa^2H^2)
\eee
for some $C=C(|||\on T|||_{0,\Omega}, n, (1-\delta)^{-1})$. Hence by choosing suitable $\delta$,  $\e$, we have
\be\label{e-kappa2}
\begin{split}
(\pps -\Delta)\kappa^2
\le &-4(1+\frac1{4n} ) |\n\kappa|^2 +C_3(\kappa^4+\kappa^2H^2)\\
\end{split}
\ee
for some $C_3=C_3(c_1,n)$.
Since $\ol{\Ric}(\vn,\vn)\ge-c_2$, and
\be\label{e-A-H}
|A|^2\ge \frac1nH^2\ge \frac1{2n}(H-\cH)^2-\frac1n\cH^2,
 \ee
 by Lemma   \ref{l-evolution-eqs}(vi), we have
\be\label{e-HcH}
\begin{split}
(\pps -\Delta)(H-\cH)^2
\le&-2(|A|^2-C_4\kappa)(H-\cH)^2-2|\n(H-\cH)|^2\\
\le &-\frac1{n}(H-\cH)^4+C_5\kappa(H-\cH)^2+\frac2n\cH^2-2|\n(H-\cH)|^2\\
\le &   -\frac1{2n}(H-\cH)^4- 2|\n(H-\cH)|^2+C_6\kappa^2
\end{split}
\ee
for some $C_4, C_5, C_6>0$ depending only on  $c_1,c_2, n$. Let $\mu>0$ to be determined, and let
\bee
\Phi=e^{\lambda u}\kappa^2+\mu(H-\cH)^2.
\eee

We may assume $C_3\ge C_1$. Then  by combining \eqref{e-expu}, \eqref{e-kappa2}, \eqref{e-HcH}, we have:

\be\label{e-Phi}
\begin{split}
 (\pps -\Delta)\Phi\le&  e^{\lambda u}\lf[-4(1+\frac1{4n} ) |\n\kappa|^2-\lambda^2\kappa^2|\n u|^2\ri]
+C_3e^{\lambda u}\kappa^2\lf((\lambda\a^{-1}+1)\kappa^2+H^2 \ri) \\
&-2\la\n e^{\lambda u},\n \kappa^2\ra- \frac\mu{2n}(H-\cH)^4- 2\mu |\n(H-\cH)|^2+C_6\mu\kappa^2
\end{split}
\ee
Now
\bee
\begin{split}
&e^{\lambda u}\lf[-4(1+\frac1{4n} ) |\n\kappa|^2-\lambda^2\kappa^2|\n u|^2\ri]
-2\la\n e^{\lambda u},\n \kappa^2\ra\\
\le&e^{\lambda u}\lf[- \frac{1+4n}{4n}  \kappa^{-2}|\n\kappa^2|^2-\lambda^2\kappa^2|\n u|^2
+2\lambda |\n u||\n\kappa^2|\ri] \\
=&e^{\lambda u}\lf[- \frac{1+4n}{4n}  \kappa^{-2}|\n\kappa^2|^2-\frac{4n}{1+4n}\lambda^2\kappa^2|\n u|^2
+2\lambda |\n u||\n\kappa^2|\ri]-\frac1{1+4n}\lambda^2e^{\lambda u}\kappa^2|\n u|^2\\
\le & -\frac1{1+4n}\lambda^2e^{\lambda u}\a^{-2}\kappa^2(\kappa^2-1)
\end{split}
\eee
by Lemma \ref{l-grad kappa}.
\bee
\begin{split}
 (\pps -\Delta)\Phi\le&  -\frac1{4n+1}\lambda^2\kappa^2e^{\lambda u}\a^{-2}(\kappa^2-1)+C_3e^{\lambda u}\kappa^2\lf((\lambda\a^{-1}+1)\kappa^2+H^2 \ri)\\
 &  - \frac\mu{2n}(H-\cH)^4- 2\mu |\n(H-\cH)|^2+C_6\mu\kappa^2.\\
\end{split}
\eee
Since   $H^2\le 2(H-\cH)^2+2\cH^2$,
\bee
e^{\lambda u}\kappa^2H^2\le  e^{\lambda u}(\kappa^4+(H-\cH)^4+2\cH^2\kappa^2).
\eee
Hence
\bee
\begin{split}
 (\pps -\Delta)\Phi\le& -\frac1{4n+1}\lambda^2\kappa^2e^{\lambda u}\a^{-2}(\kappa^2-1)+C_7e^{\lambda u}\lf(\lambda \a^{-1} +1\ri)\kappa^4\\
 &- \lf(\frac\mu{2n}-C_3e^{\lambda u}\ri)(H-\cH)^4- 2\mu |\n(H-\cH)|^2+C_8e^{-\lambda u}(\mu+e^{\lambda u})^2
\end{split}
\eee
for some positive constants $C_7, C_8$ depending only on $c_1, c_2, n$.
Choose $\mu$ such that
$$
\frac\mu{2n}-C_3e^{\lambda \tau_2}=1.
$$
Note that  $\mu=\mu(c_1,c_2, n,\lambda, \tau_2)$.
We have
\be\label{e-Phi-kappa small}
\begin{split}
(\pps-\Delta)\Phi\le &-\frac1{4n+1}\lambda^2\kappa^2e^{\lambda u}\a^{-2}(\kappa^2-1)+C_7e^{\lambda u}\lf(\lambda \a^{-1} +1\ri)\kappa^4\\
 &-  (H-\cH)^4- 2\mu |\n(H-\cH)|^2+C_8e^{-\lambda u}(\mu+e^{\lambda u})^2.
 \end{split}
 \ee
 Choose $\lambda$ large enough depending only on $|||\a|||_{0,\Omega_{\tau_1,\tau_2}}$ and $C_7$, so that {\it if $\kappa\ge 2$}, then
$$
-\frac1{4n+1}\lambda^2  \a^{-2}(\kappa^2-1)+C_7 \lf(\lambda \a^{-1} +1\ri)\kappa^2\le - \kappa^2.
$$
Note that $\lambda=\lambda(c_1,c_2,n)$ and   $\mu=\mu(c_1,c_2, n,\tau_2)$.
So for $\kappa\ge 2$, we have

\be\label{e-Phi-kappa large}
\begin{split}
 (\pps -\Delta)\Phi\le&-e^{\lambda u}\kappa^4-(H-\cH)^4+ C_8e^{-\lambda u}(\mu+e^{\lambda u})^2
 \end{split}
\ee
As in the proof of Lemma \ref{l-max}, let $\phi=e^{\sigma s+\eta}$ for some $\sigma>0$  where $\eta$ is as in Lemma \ref{l-exhaustion} so that
\bee
(\pps-\Delta)\phi>0.
\eee
Let $m_0=\sup_{M_0}\Phi$. For any $\e>0$. Then $\Phi-m_0-\e \phi<0$ at $s=0$ and at infinity. If $\Phi-m_0-\e\phi>0$ somewhere, then there is $\bx_1\in M, 0<s_1\le s_0$ so that $(\Phi-m_0-\e\phi)(\bx_1,s_1)=
\sup_{M\times[0,s_0]}(\Phi-m_0-\e\phi)$. At this point we have:
\bee
0\le (\pps-\Delta)(\Phi-m_0-\e\phi)< (\pps-\Delta) \Phi.
\eee
Suppose at this point $\kappa\ge 2$, then by \eqref{e-Phi-kappa large}
\bee
0\le -e^{\lambda u}\kappa^4-(H-\cH)^4+ C_9e^{-\lambda u}(\mu+e^{\lambda u})^2.
\eee
Hence $e^{\lambda u}\kappa^2 +(H-\cH)^2\le C(c_1,c_2, n,\tau_1, \tau_2)$. This implies $\Phi-\e\phi\le C$ for some $C=C(c_1,c_2, n,\tau_1, \tau_2)$.

Suppose at this point $\kappa\le 2$, then by \eqref{e-Phi-kappa small}
\bee
(H-\cH)^4\le C(c_1,c_2,n,\tau_1,\tau_2).
\eee
and we also have $\Phi-\e\phi\le C(c_1,c_2,n,\tau_1,\tau_2)$. In any case, $$\Phi-\e\phi\le C(c_1,c_2,n,\tau_1,\tau_2)+m_0.$$ Let $\e\to0$, the result follows.

\end{proof}

By  \cite[p.604]{EckerHusiken1991} and the proof of  \cite[Th. 4.1]{Huisken1987}, we have the following estimate for $|\n^k A|$.

\begin{lma}\label{l-A-est} Let $\bF:M\times[0,s_0]$ be a solution to the prescribed mean curvature flow \eqref{e-mcf-1} so that the initial metric on $M_0$ is complete with bounded second fundamental form  and so that the tilt factor of the immersed surface $\bF^s: M\to N$ is uniformly bounded by $\kappa_0$. Suppose $\bF(M\times[0,s_0])\subset \Omega$ and $|||\oRm|||_{k,\Omega}$, $|||\cH|||_{k,\Omega}$ are finite for all $k\ge0$ and suppose $\sup_{M\times[0,s_0]}|\n^kA|<\infty$ for all $k$. Then for all $m\ge0$,
\bee
\sup_{M\times[0,s_0]}|\n^m A|^2\le C_m
\eee
for some $C_m$ depending only on $  n,m,$ $ \sup_{M\times[0,s_0]}\kappa$, $||| \oRm |||_{m+1,\Omega}$, $|||\cH|||_{m+2,\Omega}$, and
$\sum_{j=0}^{m-1}\sup_{M_0}|\n^jA|$.
\end{lma}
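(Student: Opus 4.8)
The plan is to argue by induction on $m$: at each stage one derives, from the Simons-type evolution equation of Lemma \ref{l-evolution-eqs}(v), a differential inequality for $|\n^mA|^2$ in which the terms with the wrong sign are controlled by the lower order quantities already bounded at earlier stages, and then one invokes the maximum principle of Lemma \ref{l-max} on the noncompact $M$. Two remarks organize the argument. First, since by hypothesis $\sup_{M\times[0,s_0]}|\n^kA|<\infty$ for every $k$, every function we form is \emph{a priori} bounded on $M\times[0,s_0]$, so the growth condition $f\le C_1e^{C_2\eta}$ in Lemma \ref{l-max} (with the exhaustion $\eta$ of Lemma \ref{l-exhaustion}) is automatic and all that is at stake is making the final bound depend only on the listed quantities. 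Second, the uniform tilt bound $\kappa\le\kappa_0$ together with Lemma \ref{l-tilt-equivalent} converts the hypotheses $|||\on^l\oRm|||_{0,\Omega}<\infty$, $|||\on^l\cH|||_{0,\Omega}<\infty$ into pointwise bounds along the moving hypersurfaces $M_s$ for the corresponding ambient tensors evaluated on $\vn$; this is the only use of the $G_E$-formulation and it requires no completeness of $G_E$.

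For $m=0$ this is Ecker's estimate. One pairs the evolution of $|A|^2$ obtained by contracting Lemma \ref{l-evolution-eqs}(v) with $A^{ij}$ (using Lemma \ref{l-evolution-eqs}(ii) for the metric) with the evolution of the tilt factor in Lemma \ref{l-evolution-eqs}(viii): in the Lorentzian setting the reaction term from $-|A|^2A_{ij}$ together with the term $-\kappa(|A|^2+\oRic(\vn,\vn))$ in the $\kappa$-equation produces a genuinely negative quartic once the curvature and $\cH$-contributions (bounded in terms of $\kappa_0$, $|||\oRm|||_{1,\Omega}$, $|||\cH|||_{2,\Omega}$) and the cubic term $HA_{ri}A^r_j$ are absorbed by Young's inequality. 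Forming a quantity of the shape $\Psi=\theta(s)\,f(\kappa)|A|^2$ with a suitable cutoff $\theta$ vanishing at $s=0$ and a suitable $f$, one gets
\[
\left(\pps-\Delta\right)\Psi\le-c\,\Psi^2+C,
\]
and applying Lemma \ref{l-max} to $\Psi-(C/c)^{1/2}$ yields $\sup_{M\times[0,s_0]}|A|^2\le C_0$ of the required form, as in \cite{Ecker1993} and \cite[p.604]{EckerHusiken1991}.

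For the inductive step, assume $\sup_{M\times[0,s_0]}|\n^jA|$ is already bounded by an admissible constant for $0\le j\le m-1$. Differentiating Lemma \ref{l-evolution-eqs}(v) $m$ times and commuting $\n$ past $\Delta$ and $\pps$ (using $\n\vn=A$, the Gauss identity $\Rm=A*A+\oRm$ for the intrinsic curvature, and Lemma \ref{l-evolution-eqs}(iii) for $\pps\Gamma$) gives schematically
\[
\left(\pps-\Delta\right)\n^mA=\n^mA*A*A+\!\!\!\sum_{\substack{i+j+k=m\\ i,j,k\le m-1}}\!\!\!\n^iA*\n^jA*\n^kA+\mathcal E,
\]
where $\mathcal E$ gathers contractions of $\on^l\oRm$ with $l\le m+1$ and of $\on^l\cH$ with $l\le m+2$ — the $+1,+2$ coming from the one, resp.\ two, extra ambient derivatives already present in Lemma \ref{l-evolution-eqs}(v), plus the derivatives that land on $\vn$ and are traded for factors of $A$ by Weingarten — together with powers of $\kappa$ and factors $\n^pA$, $p\le m$; all of $\mathcal E$ and the middle sum are bounded by the inductive hypothesis apart from the terms linear in $\n^mA$. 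Contracting with $\n^mA$ produces
\[
\left(\pps-\Delta\right)|\n^mA|^2\le-2|\n^{m+1}A|^2+\Lambda_m\bigl(1+|\n^mA|^2\bigr)
\]
with $\Lambda_m$ admissible. To absorb $\Lambda_m|\n^mA|^2$ one interpolates: set $\Phi=\theta(s)\bigl(K+|\n^{m-1}A|^2\bigr)|\n^mA|^2$ with $K$ large (depending on $\sup|\n^{m-1}A|$) and $\theta$ a time cutoff with $\theta(0)=0$; the inductive inequality $(\pps-\Delta)|\n^{m-1}A|^2\le-2|\n^mA|^2+\Lambda_{m-1}$ supplies a good $-|\n^mA|^4$ term, Kato's inequality $|\n|\n^mA|^2|\le2|\n^mA|\,|\n^{m+1}A|$ lets the gradient cross term be swallowed by $K|\n^{m+1}A|^2$ plus lower order, and one arrives at $(\pps-\Delta)\Phi\le-c\,\Phi^2+C$ with $c,C$ admissible. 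Lemma \ref{l-max} applied to $\Phi-(C/c)^{1/2}$ then bounds $\Phi$, hence $\sup_{M\times[0,s_0]}|\n^mA|^2$; the cutoff $\theta$ is what removes the value at $s=0$, keeping the dependence down to $\sum_{j\le m-1}\sup_{M_0}|\n^jA|$ as in \cite[Th.~4.1]{Huisken1987}. This closes the induction.

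The two genuine (if not deep) difficulties I expect are, first, the noncompactness: every maximum-principle step must be routed through Lemma \ref{l-max} and the exhaustion function $\eta$, and one must check that the ambient bounds really control the geometry of the \emph{moving} hypersurfaces uniformly in $s$, which is exactly what the $G_E$-norms and the uniform tilt bound secure. Second, the combinatorial bookkeeping in the display for $(\pps-\Delta)\n^mA$: keeping exact track of how many ambient derivatives are produced — distinguishing those that genuinely differentiate $\oRm$, $\cH$ from those absorbed into $\n^{\le m}A$ via $\n\vn=A$ and Lemma \ref{l-evolution-eqs}(iii) — is what pins down the orders $m+1$ and $m+2$, and arranging that $C_m$ uses no derivative of $A$ at $s=0$ beyond order $m-1$ forces the use of the time cutoff throughout.
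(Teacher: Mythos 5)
Your overall structure---induction on $m$, differentiating the Simons-type equation of Lemma \ref{l-evolution-eqs}(v) and commuting covariant derivatives past $\pps$ and $\Delta$, routing the maximum principle through Lemma \ref{l-max} and the exhaustion of Lemma \ref{l-exhaustion}, observing that the a priori finiteness hypotheses make the growth condition of Lemma \ref{l-max} automatic, and the bookkeeping that produces $\on^l\oRm$ with $l\le m+1$ and $\on^l\cH$ with $l\le m+2$---is correct and is exactly what the references the paper cites, \cite[p.~604]{EckerHusiken1991} and \cite[Th.~4.1]{Huisken1987}, carry out; the paper itself gives no further detail beyond the citation.

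There is, however, an internal inconsistency. You introduce the time cutoff $\theta$ with $\theta(0)=0$ into $\Psi=\theta(s)f(\kappa)|A|^2$ and $\Phi=\theta(s)(K+|\n^{m-1}A|^2)|\n^mA|^2$ precisely to avoid dependence on $\sup_{M_0}|\n^mA|$, and you say explicitly that ``the cutoff $\theta$ is what removes the value at $s=0$.'' But the quantity the maximum principle bounds is $\Phi$, hence only $\theta(s)|\n^mA|^2$; that bound degenerates as $s\to 0$ and does not control $\sup_{M\times[0,s_0]}|\n^mA|^2$, since the supremum includes $s=0$ where $\theta$ vanishes. The same defect is already present at $m=0$. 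The repair is simply to drop $\theta$: the maximum principle then compares $\Phi$ to $\max\bigl(\sup_{M_0}\Phi,\ \sqrt{C/c}\bigr)$, and the final constant picks up a dependence on $\sup_{M_0}|\n^mA|$ as well. That is in fact what \cite[Th.~4.1]{Huisken1987} delivers (its constant depends on the full initial geometry of $M_0$), and it is all the paper uses---in the proofs of Lemma \ref{l-extension} and Theorem \ref{t-convergence}, $\sup_{M\times[0,s_0]}|\n^mA|<\infty$ is already a hypothesis and $M_0$ is fixed, so the extra dependence is harmless. The stated dependence on only $\sum_{j=0}^{m-1}\sup_{M_0}|\n^jA|$ appears to be a slip in the lemma (the sum should run to $m$); your cutoff is an attempt to realize a dependence that cannot hold for a supremum that includes $s=0$.
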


Now we are ready to prove Lemma \ref{l-extension}.

\begin{proof}[Proof of Lemma \ref{l-extension}] By Lemma  \ref{l-grad}, there is a constant $\kappa_1 $ such that
\be\label{e-kappa-bound}
\sup_{M\times[0,s_0)}\kappa\le \kappa_1.
\ee
Using this, by Lemma \ref{l-A-est}, for any $m$, there is a constant $c_m$ such that
\be\label{e-A-bound}
\sup_{M\times[0,s_0)}|\n^m A|\le c_m.
\ee
Let $\bx_0\in M$ and let $B_{\bx_0}(r_0)$ be a geodesic ball with respect to $g_0$ with center $\bx_0$ and radius $r_0$, which is inside  a coordinate chart with $\bx=(x^1,\cdots, x^n)$. First we want to obtain $C^0$ limit of $\bF(\cdot,s)$ as $s\to s_0$.  Let $\rho$ be the function satisfying condition \eqref{e-rho}. Since $\bF(\bx,s)\in \Omega_{\tau_1,\tau_2}$, by the assumption on $\rho$, and \eqref{e-mcf-1} we have for $\bx\in B_{\bx_0}(r_0)$,
\bee
\rho(\bF(\bx,s))=\int_0^s\la \on \rho, \p_s\bF\ra d\sigma\le C_1s
\eee
for some constant $C_1$ depending only on $\kappa_0$, $\sup_{M\times[0,s_0)}(|H-\cH|)$ and $ |||\on \rho|||_{0,\Omega_{\tau_1,\tau_2}}$. Hence $\bF(B_{\bx_0}(r_0)\times[0,s_0))\subset \ol\Omega_{\tau_1,\tau_2}\cap \{\rho\le \rho_0\}=K$ which is compact, where $$\rho_0=C_1s_0+\sup_{B_{\bx_0}(r_0)}\rho+1.$$
Hence $K$ is compact with respect to the reference Riemannian metric $G_E$ with respect to $\tau$. Let $d(p,q)$ be the distance function defined by $G_E$. Then for $\bx\in B_{\bx_0}(r_0)$, for $s<s'<s_0$ by \eqref{e-mcf-1} we have:
\bee
d(\bF(\bx,s), \bF(\bx,s'))\le \int_s^{s'}|||\p_s\bF||| d\sigma\le C_2(s'-s)
\eee
for some constant $C_2$ depending only on $\kappa_0, \sup_{M\times[0,s_0)}(|H-\cH|)$. Since $K$ is compact, we conclude that $\bF(\bx,s)$ converges uniformly on $B_{\bx_0}(r_0)$ as $s\to s_0$ to a continuous map  $\bF^{s_0}$ from $B_{\bx_0}(r_0)$ to $N$  with image inside $K$. Let $p= \bF^{s_0}(\bx_0)$. Then there is $\sigma_0>0$ such that $\bF^s(B_{\bx_0},r_0)\Subset D_p(\sigma_0)$ for all $s_1<s<s_0$ for some $s_1>0$, for a possibly smaller $r_0$. Here $D_p(\sigma_0)$ is the geodesic ball with respect to $G_E$ with center at $p$, and radius $\sigma_0$. We may also assume that $D_p(\sigma_0)$ is inside a coordinate neighborhood with coordinates $y^0, y^1,\cdots, y^n$ so that $G_E$ is uniformly equivalent to the Euclidean metric and the components of the metric tensor and its derivatives with respect to $y^a$ are uniformly bounded. Then $\bF$ is of the form:

\bee
\bF =(f^0, f^1,\cdots, f^n).
\eee
In order to prove the lemma, it is sufficient to prove that $\p_s^k\p^\b f^a$ are uniformly bounded, for any $k, \ell\ge 0$ for any multi-index $\b$ for $x^i$ with $|\b|\le \ell$. This is rather standard because of \eqref{e-kappa-bound}, \eqref{e-A-bound} and Lemma \ref{l-evolution-eqs}. We sketch the proof as follows.  Let $\p_i=\frac{\p}{\p x^i}$ be the coordinate frame in $B_{\bx_0}(r_0)$.
Then
\be\label{e-longtime-1}
\left\{
  \begin{array}{ll}
    \bF_*(\p_i)=f^a_i\frac{\p}{\p y^a}; \\
  (H-\cH)\vn= \p_s\bF=f^a_s\frac{\p}{\p y^a}\\
A_{ij}\vn= \lf(f^a_{ij}-\Gamma_{ij}^kf^a_k +f^b_if^c_j\ol\Gamma_{bc}^a\ri)\frac{\p}{\p y^a}.
  \end{array}
\right.
\ee
here $f^a_i, f^a_{ij}$ are partial derivatives with respect to $x^i$, $\ol \Gamma$ is the connection of the target.

By \eqref{e-kappa-bound}, \eqref{e-A-bound} and Lemma  \ref{l-evolution-eqs}(i)--(iii), we can conclude that $g_{ij}(s)$ is uniformly equivalent to $g_{ij}(0)$, and $\Gamma_{ij}^k(s)$ is uniformly bounded. Hence $f^a_i, f^a_s$ are uniformly bounded. By differentiate the equation of $\p_s\Gamma_{ij}^k$ with respect to $x^l$ and integrate, we can conclude that all the partial derivatives of $\Gamma_{ij}^k$ with respect to $x^l$ are uniformly bounded. Hence all the partial derivatives of $A_{ij}$ and $H$ are bounded. Since $|||\cH|||_{k,\Omega_{\tau_1,\tau_2}}<\infty$ for all $k\ge0,$ we conclude that all the partial derivatives of $\cH$ are uniformly bounded using \eqref{e-kappa-bound}. Using \eqref{e-longtime-1}, we may conclude that $ f^a_{ij}$ are uniformly bounded. Take derivatives of the third equation with respect to $x^l$ in \eqref{e-longtime-1}, we conclude that all partial derivatives of $f^a$ with respect to $x^i$ are uniformly bounded. Differentiate the third equation with respect to $x^l$, then we conclude inductively  that $\p^\b f^a$ are uniformly bounded for all multi-index $\b$ for $x^i$. In particularly, $\bF(\bx,s)$ converges in $C^\infty$ in $B_{\bx_0}(r_0)$ to $\bF^{s_0}(\bx)$. Next, we want to investigate $\p_s^k\p^\b f^a$. By Lemma \ref{l-evolution-eqs}(v), (vi) one can conclude that $\p_s^k\p^\b (H-\cH)$, $\p_s^k\p^\b A_{ij}$ are uniformly bounded. For example
\bee
\p_s(H-\cH)=\Delta (H-\cH)-\lf(|A|^2+\ol{\Ric}(\vn,\vn)+\la \on\cH,\vn\ra\ri)(H-\cH).
\eee
Hence $\p_s(H-\cH)$ is uniformly bounded.
Note that
\bee
\p_s\vn=\n(H-\cH)
\eee
and
\bee
\on_{\bF_*(\p_i)}\vn= g^{kl}A_{il}\bF_*(\p_k)=g^{kl}A_{il}f^a_l\frac{\p}{\p y^a}.
\eee
Differentiate with respect to $x^l$, we conclude that $\p_s\p^\b(H-\cH)$ are uniformly bounded. Similarly, $\p_s\p^\b A_{ij}$ are uniformly bounded. By differentiate $\p_s\p^\b(H-\cH)$ with respect to $s$ one can conclude that $\p_s^2\p^\b(H-\cH)$ are uniformly bounded. Here we also use  the fact that $\p_s g_{ij}$ are uniformly bounded. Inductively, one can prove  that $\p_s^k\p^\b (H-\cH)$, $\p_s^k\p^\b A_{ij}$ are uniformly bounded.
Differentiate the second equation in \eqref{e-longtime-1} with respect to $x^l, s$, we conclude inductively that $\p^k_s\p^\b f^a$ are uniformly bounded. Hence the lemma is true.
\end{proof}

\section{Convergence}\label{s-convergence}

We will prove the results on convergence. First we prove Theorem \ref{t-convergence}.

\begin{proof}[Proof of Theorem \ref{t-convergence}] As before, we will denote the immersed surface $\bF(\cdot,s): M\to N$ simply by $M_s$. By  Lemmas \ref{l-grad},  \ref{l-A-est} and the assumptions of the theorem, there exists    $\kappa_1$ and for any $m\ge 0$, there exists $c_m$ such that
\be\label{e-kappa-A-longtime}
\sup_{M\times[0,\infty)}\kappa\le \kappa_1; \sup_{M\times[0,\infty)}|\n^mA|\le c_m.
\ee
At $s=0$, $H\ge \cH-a+\ve_0\ge \ve_0$ and
\be\label{e-bound-1}
|A|^2+\oRic(\vn,\vn)+\la\on\cH,\vn\ra\ge \frac{H^2}n-c^2\ge \frac{\ve_0^2}n-c^2>0.
\ee
We want to prove that this is still true for $s>0$. Since $\tau_-<\tau_1\le \tau(\bF)\le \tau_2<\tau_+$, using \eqref{e-kappa-A-longtime}, Lemma \ref{l-evolution-eqs}(vi) and the assumptions on $\oRm, \cH$, we conclude that  for any $\e>0$, there is $s_0>0$ so that for $0\le s\le s_0$,
$$
|A|^2+\oRic(\vn,\vn)+\la\on\cH,\vn\ra>0.
$$

 By Lemma \ref{l-evolution-eqs}(vi), in $M\times[0,s_0]$,

\bee
\lf(\pps-\Delta\ri)(\cH-H) \le - (|A|^2+\oRic(\vn,\vn)+\la\on\cH,\vn\ra)(\cH-H)
\eee
which is negative at the points where $\cH-H>0$. By Lemma \ref{l-max},
\bee
\sup_{M\times[0,s_0]}(\cH-H)\le \max\{0, \sup_{M_0}(\cH-H)\}\le \max\{0,  a-\e_0\}=a-\e_0.
\eee
because $a>\e_0$. So $H\ge \cH-a+\e_0\ge \e_0$ because $\cH\ge a$. Hence \eqref{e-bound-1} is true on $M\times[0,s_0]$. Because $H-\cH\ge -a+\ve_0$ at $s=s_0$, iterating, we conclude that   \eqref{e-bound-1} is true for all $s$. So by  Lemma \ref{l-evolution-eqs}(vi),
\be\label{e-0-order}
\lf(\pps-\Delta\ri)(H-\cH)^2\le -\ve_1 (H-\cH)^2-2|\n (H-\cH)|^2.
\ee
Using Lemmas \ref{l-exhaustion} and \ref{l-max},   we can conclude that there is a constant $C_0, b_0>0$ such that
\be\label{e-H-cH-decay-1}
\sup_{M_s}|H-\cH|^2\le C_0e^{-b_0 s}.
\ee
By Lemma \ref{l-evolution-eqs}(i), we have that
\be\label{e-metric-1}
\lambda^{-1} g(0)\le g(s)\le \lambda g(0)
\ee
for some $\lambda>0$ for all $s$, where $g(s)$ is the induced metric on $M_s$. Next, we want to show that  for any $k\ge1$, there are   constants $C_k, b_k>0$ such that
\be\label{e-H-cH-decay-2}
\sup_{M_s}|\n^k(H-\cH)|^2\le C_ke^{-b_k s}.
\ee
We will prove this inductively.  To simplify the notation, we write the evolution equation Lemma \ref{l-evolution-eqs}(vi) as
\be\label{e-evolution-H}
(\pps-\Delta)(H-\cH)=\psi (H-\cH).
\ee
where $\psi=-(|A|^2+\oRic(\vn,\vn)+\la\on\cH,\vn\ra).$  By \eqref{e-longtime-1} and the fact that \eqref{e-assumption-1} is true, $|\n^m\psi|$ is uniformly bounded in space time for all $m\ge 0$.
Let us  prove that  the estimate \eqref{e-H-cH-decay-2} is true for $k=1$. The general case can be proved similarly  by induction.

 By the Bochner's formula:
\bee
\begin{split}
(\pps-\Delta)|\n(H-\cH)|^2=&-2|\n^2(H-\cH)|^2-\la \n(\psi (H-\cH)), \n(H-\cH)\ra
\\
&-2\Ric(\n(H-\cH),(\n(H-\cH))\\
\le &-2|\n^2(H-\cH)|^2+K_1\lf(|\n(H-\cH)|^2+(H-\cH)^2\ri)
\end{split}
\eee
in $M\times [0,\infty)$ for some $K_1>0$. Here $\Ric$ is the Ricci tensor of $M_s$ which is uniformly bounded from below because of  \eqref{e-longtime-1} and  \eqref{e-assumption-1}. Combining with \eqref{e-0-order}, there exists $K_2>0$ and $b_2>0$ such that on $M\times[0,\infty)$:
\bee
\begin{split}
(\pps-\Delta)(|\n(H-\cH)|^2 +K_2(H-\cH)^2)\le &
  -2|\n^2(H-\cH)|^2\\
&-b_2(|\n(H-\cH)|^2+K_2(H-\cH)^2).
\end{split}
\eee
From this one can conclude that \eqref{e-H-cH-decay-2} is true for $k=1$. Using \eqref{e-kappa-A-longtime}, \eqref{e-assumption-1}, we conclude that all the derivatives of $\Rm$ on $M_s$ are uniformly bounded in space time. Hence by differentiating \eqref{e-evolution-H}, argue as above we can conclude that   \eqref{e-H-cH-decay-2} is true for all $k$ by induction.

Using Lemma \ref{l-evolution-eqs}(iii), one can conclude that $\p_s\Gamma_{ij}^k$ is of exponential decay and so $\Gamma(s)-\Gamma(0)$ are uniformly bounded in spacetime. Differentiate the equation, we have $\n^k(\Gamma(s)-\Gamma(0))$ are also uniformly bounded in space time. One can proceed as in the proof of Theorem \ref{t-longtime} to conclude that   as $s\to\infty$, $\bF(\cdot,s)$ converges in $C^\infty$ norm in compact sets in $M$ to $\bF_\infty$ so that $\bF_\infty: M\to N$ is an immersed  spacelike hypersurface which is complete in the induced metric because of \eqref{e-metric-1}. By \eqref{e-H-cH-decay-1}, the mean curvature $H_\infty$ of $M_\infty$   is equal to $\cH$.

To prove the last assertion, we follow \cite{Ecker1993}. For any $s_0>0$, by \eqref{e-kappa-A-longtime} and Lemma \ref{l-evolution-eqs}, $\n\phi$ and $\n^2\phi$ are uniformly bounded with respect to $g(s)$ in $M\times[0,s_0]$. Using \eqref{e-0-order}, one obtain
\bee
\begin{split}
\lf(\pps-\Delta\ri)(\phi^2(H-\cH)^2)\le C\phi^2(H-\cH)^2
\end{split}
\eee
on $M\times[0,s_0]$ for some $C>0$. Hence
\bee
\begin{split}
\lf(\pps-\Delta\ri)(e^{-Cs} \phi^2(H-\cH)^2)\le 0.
\end{split}
\eee
By Lemmas \ref{l-exhaustion} and \ref{l-max},  we have
$$
\sup_{M\times[0,s_0]} \phi^2(H-\cH)^2 <\infty.
$$
Combining this with  Lemma \ref{l-evolution-eqs}(vii) and \eqref{e-kappa-A-longtime} and the fact that $\a^{-1}$ is uniformly bounded in $M\times[0,s_0]$, one can conclude that
\bee
|\pps\phi^2(u( s)-u_0)^2|\le C
\eee
in $M\times[0,s_0]$ for some $C>0$. Here $u(s)$ is the height function of $M_s$. This implies that $u(\bx,s)-u_0(\bx)\to 0$ as $\bx\to\infty$ uniformly in $s\in[0,s_0]$.  On the other hand, by Lemma \ref{l-evolution-eqs}(vii), \eqref{e-H-cH-decay-1}, \eqref{e-kappa-A-longtime} and the fact that $\a^{-1}$ is uniformly bounded because the image of $\bF$ is inside $\Omega_{\tau_1,\tau_2}$, we have
\bee
|u_\infty(\bx)-u(\bx,s)|\le C' e^{-b_0s}
\eee
for some $C'>0$ for all $\bx, s$. Combining these two facts, it is easy to see that the last assertion is true.

\end{proof}

The method of proof of the last assertion in the theorem can be used to obtain height estimate of the prescribed mean curvature flow if we have suitable barrier surface. In this case, one may construct spacelike hypersurface with prescribed mean curvature. One example is as follows.  First recall that the {\it past domain of dependence} $D^-(S)$  of an achronal set $S$ is the set consisting of points $p$ in the spacetime so that every inextendible future causal curve from $p$ will meet $S$.  We have the following:

\begin{cor}\label{c-existence}
Let $(N^{n+1},G), \bX_0$, $\tau$, $\rho$, $\cH$ be as in Theorem \ref{t-longtime} so that $M_0 \subset  \Omega_{\tau_1,\tau_2}$ for some $\tau_-<\tau_1<\tau_2<\tau_+$. In addition, assume the following:
\begin{enumerate}
  \item [(i)] $\oRic(w,w)\ge0$ for all time like vector;
  \item [(ii)] $\cH$ is monotone,
     $\cH\ge a>0$ for some constant $a$, $H-\cH\ge0$ on $M_0$, and $\sup_{M_0}\phi(H-\cH)<\infty$ for some smooth function $\phi\ge1$ on $M_0$ with $\sup_{M_0}(|\n\phi|+|\n^2\phi|)<\infty$ and $\phi\to\infty$ as $\bx\to \infty$ in $M_0$;
  \item[(iii)] $\a^{-1}$ is uniformly bounded on $\Omega_{\tau',\tau''}$ for any $\tau_-<\tau'<\tau''<\tau_+$;
      \item[(iv)] there exists an achronal spacelike hypersurface  $M^+$ with mean curvature $H^+<\cH$ on $M^+$, $\tau_+>\sup\tau|_{M^+}\ge\inf \tau|_{M^+}>\tau_2$ such that $M_0\subset D^-(M^+)$.
\end{enumerate}
Then the prescribed mean curvature flow \eqref{e-mcf-1} has long time solution $\bF$ and $\bF$ converges in $C^\infty$ norms on compact sets of $M$ to a complete spacelike hypersurface with prescribed mean curvature $\cH$.

\end{cor}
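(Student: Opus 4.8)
The plan is to run the three engines already built in the paper: short-time existence (Theorem~\ref{t-shorttime Lorentz} together with Remark~\ref{r-short time}), the continuation criterion (Theorem~\ref{t-longtime}), and the convergence result (Theorem~\ref{t-convergence}). All the structural hypotheses of these theorems (the $G_E$-bounds on $\on\bT$, $\oRm$, $\cH$, $\a$, the existence of $\rho$, and the Ricci lower bound) are contained in the phrase ``as in Theorem~\ref{t-longtime}'', so the only genuinely new thing to produce is a uniform two-sided bound on the height function $\tau(\bF)$, and of that the nontrivial half is the upper bound, which has to come from the barrier $M^+$. To start the flow, note that (iii) gives $\inf_{M_0}\a>0$ and $|||1/\a|||_{0,\Omega_{\tau_1,\tau_2}}<\infty$, so by Remark~\ref{r-short time} there is $\e>0$ and a solution $\bF$ of \eqref{e-mcf-1} on $M\times[0,\e]$ with $\sup\kappa<\infty$ and $\sup|\n^kA|<\infty$ for all $k$; let $s_{\max}$ be as in Theorem~\ref{t-longtime}, so $\bF$ is defined on $M\times[0,s_{\max})$ with these bounds on every subinterval.

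The key step is that $H\ge\cH$ persists along the flow. By~(i), $\oRic(\vn,\vn)\ge0$, and by~(ii) $\cH$ is monotone, so $\la\on\cH,\vn\ra\ge0$; hence the coefficient $|A|^2+\oRic(\vn,\vn)+\la\on\cH,\vn\ra$ in Lemma~\ref{l-evolution-eqs}(vi) is nonnegative and $(\pps-\Delta)(\cH-H)\le0$ wherever $\cH-H>0$. Applying Lemma~\ref{l-max} on each $M\times[0,s_0]$ (legitimate because $|A|$ and $|\n A|$ are bounded there, so Lemma~\ref{l-exhaustion} supplies the exhaustion function), and using $H-\cH\ge0$ on $M_0$, gives $\sup_{M\times[0,s_0]}(\cH-H)\le0$; iterating over successive intervals, $H\ge\cH$ on all of $M\times[0,s_{\max})$. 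Consequently Lemma~\ref{l-evolution-eqs}(vii) gives $\pps\,\tau(\bF)=\a^{-1}\kappa(H-\cH)\ge0$, so $\tau(\bF(\bx,s))\ge\tau(\bX_0(\bx))>\tau_1>\tau_-$, which is the lower height bound.

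For the upper bound I would use $M^+$ as a barrier. Since $\pps\bF=(H-\cH)\vn$ is a nonnegative multiple of the future timelike normal, each flow line $s\mapsto\bF(\bx,s)$ is a future-directed causal curve issuing from $M_0\subset D^-(M^+)$. A standard domain-of-dependence argument shows that such a curve remains in $D^-(M^+)$ until, if ever, it meets $M^+$; but it cannot meet $M^+$, because at a first contact point $p$ the spacelike graphs of $M_s$ and of $M^+$ over a $\tau$-level set are tangent there with $M_s$ to the past, so the difference of heights has an interior minimum at $p$, and elliptic comparison for the quasilinear mean-curvature operator of Lemma~\ref{l-H} would force $H^+(p)\ge H_{M_s}(p)\ge\cH(p)>H^+(p)$, a contradiction. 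Hence $M_s\subset D^-(M^+)\subset J^-(M^+)$ for all $s$, and since $\tau$ increases along future causal curves, $\tau(\bF)\le\sup_{M^+}\tau=:\tau_2'<\tau_+$. I expect this barrier step --- in particular justifying the ``first contact'' comparison in the noncompact setting and dealing with the possible Cauchy horizon of $D^-(M^+)$ --- to be the main technical point; the rest is bookkeeping with cited results.

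With $\tau_-<\tau_1\le\tau(\bF)\le\tau_2'<\tau_+$ on $M\times[0,s_{\max})$, the dichotomy in Theorem~\ref{t-longtime} cannot occur, so $s_{\max}=\infty$ and the flow is global. Finally I would invoke Theorem~\ref{t-convergence}: (i) is the height bound just proved; (ii) follows from Lemmas~\ref{l-grad} and~\ref{l-A-est} on each finite time interval; (iii) holds with any $\ve_0\in(0,a)$, since $H-\cH\ge0\ge-a+\ve_0$ on $M_0$; and (iv) holds with $c=0$, for which $\ve_0^2/n-c^2>0$ is automatic. Thus $\bF(\cdot,s)$ converges in $C^\infty$ on compact subsets of $M$ to a complete immersed spacelike hypersurface $\bF_\infty$ with $H_\infty=\cH$. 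Moreover $\a^{-1}$ is bounded on $\Omega_{\tau_1,\tau_2'}$ by~(iii) and $\phi$ satisfies the requirements of Theorem~\ref{t-convergence} by~(ii), so the final assertion of that theorem additionally yields $u_\infty(\bx)-u_0(\bx)\to0$ as $\bx\to\infty$, completing the proof.
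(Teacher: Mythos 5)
Your overall architecture matches the paper's: establish $H\ge\cH$ persists, use causality for the lower height bound, use $M^+$ as a barrier for the upper height bound, then invoke Theorems~\ref{t-longtime} and~\ref{t-convergence}. But you explicitly flag and then leave open the decisive step: ``justifying the `first contact' comparison in the noncompact setting.'' This is not a side technicality --- it is the one genuinely new idea the corollary requires, and it is precisely why hypothesis~(ii) contains the $\phi$-weighted bound $\sup_{M_0}\phi(H-\cH)<\infty$. Without a confinement argument, the flow surfaces could approach $M^+$ only ``at infinity'' and there would be no first-contact point at which to run an interior comparison.

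The paper resolves this as follows. Re-running the computation from the last assertion of Theorem~\ref{t-convergence} (which is exactly what the $\phi$-hypothesis and the $\a^{-1}$-bound in~(iii) enable) shows that $u(\bx,s)-u_0(\bx)\to 0$ as $\bx\to\infty$ uniformly in $s\in[0,s_0]$. Combined with the spacetime gap $\inf\tau|_{M^+}>\tau_2\ge\sup_{M_0}u_0$, this forces $u(\bx,s)\le\tau|_{M^+}-\e$ outside a compact $K\subset M$, uniformly in $s$. Consequently any first meeting of $M_s$ with $M^+$ must occur in $K\times(0,s_0]$, where compactness produces an honest first contact point and the interior comparison (via $H\ge\cH>H^+$ and the strong maximum principle) yields the contradiction. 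Your sketch jumps straight to ``at a first contact point $p$'' without producing $p$, which is circular in the noncompact setting. You also do not notice that the $\phi$-hypothesis is doing double duty: you use it only at the very end for the asymptotic statement $u_\infty-u_0\to0$, whereas in fact it is load-bearing already in the height estimate. One further cosmetic point: the paper first disposes of the degenerate case $H\equiv\cH$ on $M_0$ and then uses the strong maximum principle to get $H-\cH>0$ for $s>0$ (so that $\p_s\bF$ is strictly timelike future-pointing, not merely causal); your weak inequality $H\ge\cH$ still gives a future causal flow line, so this is not fatal, but it is worth being aware that the paper's cleaner strict positivity is what makes the causality step frictionless.
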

\begin{proof} Suppose $H\equiv\cH$ at $M_0$, then there is nothing to prove. So we assume that $H-\cH>0$ somewhere initially.  By Theorems \ref{t-longtime} and \ref{t-convergence} it is sufficient to obtain height estimates for solution of \eqref{e-mcf-1}. Let $\bF$ be a solution on $M\times[0,s_0]$ so that $\sup_{M\times[0,s_0]}(|\n^kA|+\kappa)<\infty.$ As in the proof of the last assertion of Theorem \ref{t-convergence}, if we let $u=\tau(\bF)$ be the height function of $M_s$, then $u(\bx,s)-u_0(\bx)\to 0$ as $\bx\to\infty$ uniformly on $s$, where $u_0=\tau|_{M_0}$. Hence there is $\e>0$ and a compact set $K$, such that
$u\le \tau_{M^+}-\e$ for all $\bx\notin K$ and for all $0\le s\le s_0$. By Lemma \ref{l-evolution-eqs}(vi) and assumptions (i) and the fact that $\cH$ is monotone, we have
\bee
\lf(\pps-\Delta\ri)(H-\cH)=\psi (H-\cH)
\eee
with $\psi\le 0$. Hence Lemma \ref{l-max} and the strong maximum principle \cite[Theorem 2.7]{Lieberman}, $H-\cH>0$ if $s>0$. By \eqref{e-mcf-1}, $\p_s\bF$ is future pointing time like vector for $s>0$. Hence for any $\bx\in M$,  $\bF(\bx,s)$, $0\le s\le s_0$ is a future nonspacelike curve. Since $\tau$ is nondecreasing on such a curve, we have $\tau(\bF(\bx,s))\ge \tau_1$ for all $\bx\in M$, $s\in [0,s_0]$. To obtain an upper bound, we proceed as in \cite{EckerHusiken1991}.  Note that $\bF(\bx,s)$, $0\le s\le s_0$   is part of an inextendible future nonspacelike curve. Suppose $\bF(\bx,s)$, $0\le s\le s_0$ does not intersect $M^+$, then $\tau(\bF(\bx,s))\le \sup\tau|_{M^+}$ because $\tau$ is  nondecreasing on a future causal curve. Hence we may assume that $M_s\cap M^+\neq \emptyset$ for some $0<s\le s_0$. Since $u\le \tau_{M^+}-\e$ for all $\bx\notin K$ and for all $0\le s\le s_0$, we conclude that there exists $0<s_1\le s_0$ and $\bx_1\in K$ so that $\bF(\bx_1,s_1)\in M^+$ and for all $0\le s< s_1$, $\bF(\bx,s)$ is in the past of $M^+$. So $M_{s_1}$ is tangent to $M^+$ at $\bF(x_1,s_1)$. Moreover, at $(\bx_1,s_1)$, $H-\cH\ge0$. Hence $H>H^+$ at $\bF(\bx_1,s_1)$ this contradicts the strong maximum principle. Hence we must have $\tau(\bF)\le \le \sup\tau|_{M^+}$. One then apply Theorems \ref{t-longtime}, \ref{t-convergence} to conclude the result.

\end{proof}

Next, we want to prove a convergence result without assuming $\cH>\delta>0$.

\begin{proof}[Proof of Theorem \ref{t-convergence-1}]
Since $H-\cH\ge0$ in $M_0$,  $\oRic(w,w)\ge 0$, $\cH$ is monotone, as in the proof of Theorem \ref{t-convergence}, one can conclude that $H-\cH\ge0$ for all $s$.  Since $\cH\ge0$, we have $H\ge H-\cH\ge0$. By Lemma \ref{l-evolution-eqs}(vi), we have:
\bee
\lf(\pps-\Delta\ri)(H-\cH)^2\le -(H-\cH)^4.
\eee
Comparing the function $s^{-1}$ and $(H-\cH)^2$, using Lemmas \ref{l-exhaustion} and \ref{l-max}, we have
\be\label{e-decay-H-cH-3}
(H-\cH)^2(\bx,s)\le s^{-1}
\ee
for all $\bx\in M$.
  Now we claim that for any compact set $K\subset M$, $\rho(\bF(\bx,s))\le \rho_0$ for some $\rho_0$ for all $(\bx,s)\in K\times[0,\infty)$. Let $u(\bx,s)=\tau(\bF(\bx,s))$ be the height function, then in $K\times[0,\infty)$, by Lemma \ref{l-evolution-eqs}(vii), we have
$$
\pps u=\a^{-1}\kappa(H-\cH)\ge C_1\kappa(H-\cH)
$$
for some $C_1>0$ because $\a$ is bounded from above, and
\bee
\pps\rho(\bF(\bx,s))=\la \on\rho, \p_s\bF\ra(\bx,s)\le C_2\kappa(H-\cH)
\eee
for some $C_2>0$ by \eqref{e-mcf-1},  the fact  that $|||\on\rho|||_{\Omega_{\tau_1,\tau_2}}$ is finite, and that  $H-\cH\ge0$. Hence
\bee
\pps u-\delta \rho\ge0
\eee
for some $\delta>0$ on $K\times[0,\infty)$. Hence
\bee
u(\bx,s)-\delta\rho(\bF(\bx,s))\ge u(\bx,0)-\delta\rho(\bF(\bx,0)).
\eee
Since $\tau_1\le u\le \tau_2$, it is easy to see the claim is true.
From this one can conclude that $\bF(K\times[0,\infty))\subset W$ for some compact set $W$ in $N$. As in the proof of Theorem \ref{t-convergence}, since the tilt factor $\kappa$ and all the covariant derivatives of the second fundamental form of $\bF(\cdot,s)$ are uniformly bounded in space and time, using \eqref{e-decay-H-cH-3}  one may proceed as in \cite[Theorem 3.8]{Bartnik1988} and \cite{Ecker1993} to conclude that the theorem is true. The idea is to cover $W$ with finitely many suitable simply connected coordinate neighborhoods so that one can apply Lemma \ref{l-convergence}.  Note that the limit is strictly spacelike because $\kappa$ is uniformly bounded.

\end{proof}

\begin{rem}\label{r-dual}
It is easy to see that Theorems \ref{t-convergence}, \ref{t-convergence-1} are true under dual conditions. For example, in Theorem \ref{t-convergence-1} if we assume $\cH\le 0$, $H-\cH\le 0$ in $M_0$ instead, then the result is still true. We can appeal to the theorem by changing the time orientation and replaced $\cH$ by $-\cH$.

\end{rem}

\section{Examples}\label{s-examples}

\subsection{Application on Minkowski spacetime}\label{ss-Mink}

We will apply the previous results to study some prescribed mean curvature flow in Minkowski spacetime.
 Consider the Minkowski spacetime $\R^{1,n}$ with metric
 $$g_{\mathrm{Mink}}=-dt^2+\sum_{i=1}^n (dx^i)^2.$$
 We consider $\ppt$ as future pointing. Consider the family of spacelike surfaces with constant mean curvature $1/\tau>0$  given by
$$
S_\tau=\{(t,\bx)|\ t>0, t^2-r^2=\tau^2\}
$$
where   $\bx=(x^1,\cdots,x^n)$ and $r=|\bx|$. Then $S_\tau$ will foliate $N=I^+(0)$, the future  of the origin. We will use $\tau$ as time function, see Lemma \ref{l-time function-1} below.

\bee
\tau_-=\inf_N\tau=0,\ \  \tau_+=\sup_N\tau=+\infty.
\eee

As an application of previous results, we want to prove the following:
\begin{prop}\label{t-Mink}
\begin{enumerate}
  \item [(i)] A small  perturbation of $S_\tau$ in a compact set will be deformed under a prescribed mean curvature flow to a constant mean curvature surface with mean curvature $1/\tau$. Namely, if $M$ is a spacelike surface which coincide with $S_\tau$ outside a compact set of $S_\tau$ so that the mean curvature $H$ of $M$ satisfies $ H\ge \ve_0$ for some $\ve_0>0$. Then the prescribed mean curvature flow with $\cH=\frac1\tau$ has long time solution and will converge  to a complete spacelike hypersurface $M_\infty$ of constant mean curvature $1/\tau$. Moreover, the height function $u_\infty$ will tend to $\tau$ at infinity.

  \item [(ii)] Let $\cH$ be a smooth function in $N$ which is monotone and satisfies the condition in \eqref{e-assumption-1}. Moreover, suppose $|\cH-\frac1 {\tau_0}|<\frac1{2\tau_0}$ on $S_{\tau_0}$ and $\sup_{S_{\tau_0}}\phi|H-\cH|<\infty$, where $\phi\ge 1$ is an exhaustion function with bounded gradient and Hessian on $S_{\tau_0}$. Then the prescribed mean curvature flow with prescribed mean curvature $\cH$ has a long time solution starting from $S_{\tau_0}$ and will converge to a complete spacelike hypersurface $M_\infty$ of prescribed mean curvature $\cH$. Moreover, the height function $u_\infty$ will tend to $\tau$ at infinity.
\end{enumerate}
\end{prop}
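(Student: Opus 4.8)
The plan is to deduce both statements from the general machinery of Theorems~\ref{t-longtime} and \ref{t-convergence} (with Remark~\ref{r-short time} supplying short time existence, and, where convenient, the barrier method of Corollary~\ref{c-existence}); the two genuinely new ingredients are the verification of the abstract hypotheses for the hyperboloidal foliation and the construction of barrier hypersurfaces giving the height estimate. Because the ambient metric is flat, $\oRm\equiv0$ and $\oRic\equiv0$, so all curvature conditions hold trivially (with constant $c=0$).

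First I would record the geometry of the foliation. By Lemma~\ref{l-time function-1}, $\tau$ is a time function on $N=I^+(0)$; a direct computation gives $\la\on\tau,\on\tau\ra=-1$, hence the lapse is $\a\equiv1$, $\on\tau=-\bT$, and $\bT=\tfrac1\tau(t,\bx)$ is at each point the future unit normal of the leaf through it. The integral curves of $\bT$ are the timelike rays from the origin, hence geodesics, so $\on_\bT\bT=0$, while $\on_X\bT$ is a bounded multiple of $X$ for $X$ tangent to a leaf since the leaves $S_c$ are totally umbilic; thus $\on\bT=\tfrac1\tau\cdot(\text{orthogonal projection onto }\bT^\perp)$, and as $\on\tau=-\bT$ has unit $G_E$–norm, differentiating repeatedly gives $|||\bT|||_{k,\Omega_{\tau_1,\tau_2}}<\infty$ for every $k$ and every $0<\tau_1<\tau_2<\infty$; also $|||\a|||$ and $|||\on\log\a|||$ are trivial, so \eqref{e-assumption-1} holds for $\cH$ as in either hypothesis. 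For \eqref{e-rho} I would take $\rho=\operatorname{arcsinh}(r/\tau)$, which on $S_c$ equals $\xi/c$ with $\xi$ the geodesic distance in the (hyperbolic) induced metric from the vertex; then $|||\on\rho|||_{\Omega_{\tau_1,\tau_2}}\le1/\tau_1$, and $\ol\Omega_{\tau_1,\tau_2}\cap\{\rho\le\rho_0\}$ is a bounded closed subset of $I^+(0)$, hence compact. The initial data have bounded geometry and bounded tilt: $S_{\tau_0}$ is umbilic, smooth and complete; and $M$ in (i) agrees with $S_\tau$ (where $\n^kA=0$ for $k\ge1$ and the tilt is $1$) outside a compact set, is spacelike, and has $0<\inf_M\tau\circ\bX_0\le\sup_M\tau\circ\bX_0<\infty$. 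With $1/\a\equiv1$ and this $\rho$, Remark~\ref{r-short time} gives a short time solution, and Lemmas~\ref{l-grad}, \ref{l-A-est}, whose constants are independent of the length of the time interval, supply the uniform $\kappa$ and $\n^mA$ bounds needed to run Theorems~\ref{t-longtime} and \ref{t-convergence} once a height bound is in hand.

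The remaining, and main, point is the height estimate. For (i), $\cH=1/\tau$ is chosen so that each leaf $S_c$ is a stationary solution of \eqref{e-mcf-1} ($H\equiv\cH$ on $S_c$). Taking $0<\tau_1'<\inf_M\tau\circ\bX_0$ and $\tau_2'>\sup_M\tau\circ\bX_0$, I would use $S_{\tau_1'}$ and $S_{\tau_2'}$ as barriers from below and above: the avoidance principle—apply the strong maximum principle to the difference of the graph functions of $M_s$ and of the stationary barrier, which satisfies a linear parabolic inequality—prevents $M_s$ from touching either leaf, provided $M_s$ stays close to them near spatial infinity, which is exactly the height–at–infinity statement of Theorem~\ref{t-convergence} (with constants uniform in $s$, since $H-\cH\equiv0$ outside a compact set of $M_0$ and $|u(\bx,s)-u_0(\bx)|\le C/\phi(\bx)$ uniformly in $s$, $\phi$ an exhaustion of $(M_0,g_0)$ with bounded gradient and Hessian, which exists by \cite{Shi97,Tam2010}). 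Since $u_0\to\tau$ at infinity, $\tau(\bF)$ stays away from $\tau_1',\tau_2'$ off a fixed compact set; a standard open–closed continuity argument then traps $\tau(\bF)$ in a fixed compact subinterval of $(0,\infty)$ for all $s<s_{\max}$, so $s_{\max}=\infty$ by Theorem~\ref{t-longtime}, and Theorem~\ref{t-convergence} (or the squeezing method of \cite{Bartnik1988,Ecker1993}) gives convergence in $C^\infty$ on compact subsets of $M$ to a complete spacelike $M_\infty$ with $H_\infty\equiv1/\tau$ and $u_\infty\to\tau$ at infinity. For (ii) the argument is the same, the monotonicity of $\cH$ playing the role of the special form in (i): from $\la\on\cH,w\ra\ge0$ and $|\cH-\tfrac1{\tau_0}|<\tfrac1{2\tau_0}$ on $S_{\tau_0}$ one gets $\sup_{S_{\tau_1'}}\cH\le\sup_{S_{\tau_0}}\cH<\tfrac3{2\tau_0}$ for $\tau_1'<\tau_0$ and $\inf_{S_{\tau_2'}}\cH\ge\inf_{S_{\tau_0}}\cH>\tfrac1{2\tau_0}$ for $\tau_2'>\tau_0$, so choosing $\tau_1'<\tfrac23\tau_0$ and $\tau_2'>2\tau_0$ makes $S_{\tau_1'}$, $S_{\tau_2'}$ strict barriers ($1/\tau_1'>\sup_{S_{\tau_1'}}\cH$, $1/\tau_2'<\inf_{S_{\tau_2'}}\cH$); with the input $\sup_{S_{\tau_0}}\phi|H-\cH|<\infty$ from the hypothesis, one concludes with Theorem~\ref{t-convergence} (or Remark~\ref{r-dual}).

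The hard part is the convergence step and, hidden inside it, checking that the hypotheses of the convergence theorem really hold along the flow. The delicate point in (i) is that $\cH=1/\tau$ is \emph{not} monotone in the sense required ($\cH$ decreases toward the future), so Theorem~\ref{t-convergence} cannot be quoted verbatim: along the flow $\la\on\cH,\vn\ra=-\kappa/\tau^2$, so the coefficient governing the evolution of $H-\cH$ is $|A|^2+\oRic(\vn,\vn)+\la\on\cH,\vn\ra=|A|^2-\kappa/\tau^2$. The assumption $H\ge\ve_0$ is what keeps this positive: combining $|A|^2\ge\tfrac1n H^2$ with the a priori height bound $\tau\ge\tau_1'$ and tilt bound $\kappa\le\kappa_1$ from the barriers and Lemma~\ref{l-grad} gives $|A|^2-\kappa/\tau^2\ge\tfrac1n\ve_0^2-\kappa_1/(\tau_1')^2$, which is positive once $\ve_0$ is large enough relative to $1/\tau$—automatic for a genuine perturbation of $S_\tau$, for which $|A|^2$ is comparable to the squared mean curvature of $S_\tau$ and, since $n\ge3$, dominates $\kappa/\tau^2$. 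Propagating this positivity (via the maximum principle, as in the proof of Theorem~\ref{t-convergence}) and extracting from it the exponential decay of $H-\cH$ that drives the smooth convergence is where the bulk of the work lies; by contrast, the verification of the abstract hypotheses and of \eqref{e-rho} is routine given the explicit $\a$, $\bT$ and $\rho$ above.
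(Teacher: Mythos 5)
Your proof of part (ii) and your verification of the foliation geometry (lapse $\a\equiv1$, bounds on $\on\bT$, the exhaustion function $\rho$) are essentially correct and align with the paper's Lemma~\ref{l-time function-1}. The barrier scheme you set up for the height bound is also the same one the paper uses, via \cite[p.605]{EckerHusiken1991}.

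However, there is a genuine gap in part (i), caused by a misreading of the statement. You treat $\cH=1/\tau$ as the function whose value at a point of $N$ is the reciprocal of the time coordinate there, so that ``each leaf $S_c$ is a stationary solution.'' Under that reading $\on\cH=\tau^{-2}\bT\neq0$, $\la\on\cH,w\ra<0$ for future timelike $w$, and monotonicity in the sense of Theorem~\ref{t-convergence}(iii) fails, which forces you into the ad hoc repair involving $|A|^2-\kappa/\tau^2$ and a requirement that $\ve_0$ be ``large enough relative to $1/\tau$.'' That requirement is not in the hypothesis --- the proposition only assumes $H\ge\ve_0$ for \emph{some} $\ve_0>0$ --- so your argument does not prove the stated result. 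The intended reading, which the paper's proof makes explicit by writing $\cH=1/\tau_0$, is that $\cH$ is the \emph{constant} $1/\tau$, where $\tau$ is the fixed value of the leaf that $M$ agrees with at infinity (this is also what the conclusion ``converge to a complete spacelike hypersurface $M_\infty$ of constant mean curvature $1/\tau$'' signals). With $\cH$ constant, $\on\cH\equiv0$, monotonicity is trivial, $a=1/\tau$, $H-\cH\ge-a+\ve_0$ on $M_0$ follows immediately from $H\ge\ve_0$, and $\oRic\equiv0$ gives $\ve_0^2/n-c^2>0$ with $c=0$; so Theorem~\ref{t-convergence} applies verbatim and the only thing left to supply is the height estimate, which your barrier argument does give. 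In short: delete the ``delicate point'' paragraph, read $\cH$ as constant, and the rest of your (i) goes through exactly as in the paper.
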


We will give some examples of $\cH$ in (ii) after we prove the theorem. First, we need the following lemma.
\begin{lma}\label{l-time function-1} We have the following:
\begin{enumerate}
  \item [(i)] $\on \tau=-\frac1\tau\lf(t\ppt+r\frac{\p}{\p r}\ri)$, $\la\on\tau,\ppt\ra=\frac t\tau$. Hence $\on \tau$ is past directed timelike. The lapse function $\a$ of $\tau$ is 1.
      \item[(ii)] Let $G_E$ be the reference Riemannian metric with respect to $\tau$ and let $\bT=-\on\tau$. Then  $|||\on \bT|||_{\Omega_{\tau_1,\infty}}\le C\tau_1^{-1} $ and $|||\on^2 \bT|||_{\Omega_{\tau_1,\infty}}\le C\tau_1^{-2} $ for some constant $C$ for all $\tau_1>0$.
          \item[(iii)] Let $\rho=\log (r+2)>0$. Then $|||\on\rho|||_{\Omega_{\tau_1,\infty}}<\infty$ for all $\tau_1>0$. Moreover, for any $0<\tau_1<\tau_2<\infty$ and any $\rho_0>0$, then set $\ol\Omega_{\tau_1,\tau_2}\cap\{\rho\le \rho_0\}$ is compact.
\end{enumerate}

\end{lma}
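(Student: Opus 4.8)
The plan is to verify all three claims by explicit computation, exploiting two structural facts about Minkowski space: the hyperboloids $S_\tau$ are the level sets $\{t^2-r^2=\tau^2\}$ inside $N=I^+(0)$, and the dilation vector field $\mathbf P=t\ppt+r\frac{\p}{\p r}$ (the position vector) satisfies $\on_X\mathbf P=X$ for every $X$. For (i): on $N$ we have $\tau=(t^2-r^2)^{1/2}$, so differentiating $\tau^2=t^2-r^2$ and using the Lorentzian gradient formula $\on f=-f_t\ppt+\sum_i f_{x^i}\frac{\p}{\p x^i}$ gives $2\tau\,\on\tau=-2\mathbf P$, i.e.\ $\on\tau=-\tfrac1\tau\mathbf P$. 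Then $\la\on\tau,\ppt\ra=\tau_t=t/\tau$, which is positive on $N$; and $\la\on\tau,\on\tau\ra=\tfrac1{\tau^2}\la\mathbf P,\mathbf P\ra=\tfrac1{\tau^2}(r^2-t^2)=-1$, so $\on\tau$ is timelike, hence past directed since $\la\on\tau,\ppt\ra>0$. Consequently $\a^{-2}=-\la\on\tau,\on\tau\ra=1$, so $\a\equiv 1$ and $\bT=-\on\tau=\tfrac1\tau\mathbf P$.

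For (ii): differentiating $\bT=\tfrac1\tau\mathbf P$ and using $\on_X\mathbf P=X$ together with $X(1/\tau)=-\tfrac1{\tau^2}X\tau=\tfrac1{\tau^2}\la\bT,X\ra$ yields
\[
\on_X\bT=\tfrac1\tau\bigl(X+\la\bT,X\ra\bT\bigr)=\tfrac1\tau\,P^\perp(X),
\]
where $P^\perp$ is the $G$-orthogonal projection onto $\bT^\perp$. In a $G$-orthonormal frame $\bT=e_0,e_1,\dots,e_n$ the reference metric $G_E$ is the standard Euclidean one, and $P^\perp=\sum_{i\ge1}e_i\otimes e^i$ has constant $G_E$-norm $\sqrt n$; hence $|||\on\bT|||_{G_E}=\sqrt n\,\tau^{-1}\le\sqrt n\,\tau_1^{-1}$ on $\Omega_{\tau_1,\infty}$. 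Applying $\on$ once more and using the product rule, $\on^2\bT=\on(\tfrac1\tau)\otimes P^\perp+\tfrac1\tau\on P^\perp$; since $|||\on(1/\tau)|||_{G_E}=\tau^{-2}$ (because $\on\tau=-\bT$ and $|\bT|_{G_E}=1$) and $\on P^\perp$ is linear in $\on\bT=O(\tau^{-1})$, tracking powers of $\tau$ gives $|||\on^2\bT|||_{G_E}\le C\tau^{-2}\le C\tau_1^{-2}$ on $\Omega_{\tau_1,\infty}$.

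For (iii): $\rho=\log(r+2)\ge\log 2>0$, and $\on\rho=\tfrac1{r+2}\frac{\p}{\p r}$, where $\frac{\p}{\p r}$ is the $G$-unit radial field. From $\la\bT,\tfrac{\p}{\p r}\ra=\tfrac1\tau\la r\tfrac{\p}{\p r},\tfrac{\p}{\p r}\ra=r/\tau$ we obtain $|||\tfrac{\p}{\p r}|||_{G_E}^2=1+2(r/\tau)^2$, hence
\[
|||\on\rho|||_{G_E}^2=\frac{1+2r^2/\tau^2}{(r+2)^2}\le\tfrac14+2\tau_1^{-2}<\infty \quad\text{on }\Omega_{\tau_1,\infty}.
\]
Finally, $\{\rho\le\rho_0\}=\{r\le e^{\rho_0}-2\}$ forces $r$ bounded, and then $t=\sqrt{\tau^2+r^2}$ is bounded on $\ol\Omega_{\tau_1,\tau_2}$; since $\tau_1>0$ keeps $\ol\Omega_{\tau_1,\tau_2}$ away from the light cone, this set is closed in $\R^{n+1}$, so $\ol\Omega_{\tau_1,\tau_2}\cap\{\rho\le\rho_0\}$ is closed and bounded, hence compact.

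The computations are elementary, so there is no genuine obstacle; the one place requiring care is that every norm must be measured with respect to the reference metric $G_E$, which varies from point to point and in which the Cartesian components of $\bT$ (and of $\frac{\p}{\p r}$) are unbounded as $r\to\infty$ with $\tau$ fixed. The device that keeps (ii) clean is the identity $\on\bT=\tau^{-1}P^\perp$ together with the $\bT$-adapted orthonormal frame, in which $G_E$ is flat; the same observation controls the factor $r/\tau$ that appears in (iii).
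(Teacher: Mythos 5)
Your proof is correct, and for part (ii) it takes a genuinely different route from the paper's. The paper's argument for (ii) is a symmetry argument: it verifies the bounds at the single point $(\tau_1,0)$ on $S_{\tau_1}$, where $\bT=\ppt$ and $G_E$ is the standard Euclidean metric, and then propagates the estimate to all of $S_{\tau_1}$ (hence to $\Omega_{\tau_1,\infty}$ after observing the $\tau$-scaling) using the fact that restricted Lorentz boosts are isometries of $N=I^+(0)$ preserving $\tau$, $\bT$, and $G_E$. Your argument instead exploits the position vector field identity $\on_X\mathbf P=X$ to get the closed-form expression $\on\bT=\tau^{-1}P^\perp$ with $P^\perp$ the $G$-orthogonal projection onto $\bT^\perp$, and then reads off $|||\on\bT|||_{G_E}=\sqrt{n}\,\tau^{-1}$ in a $\bT$-adapted orthonormal frame where $G_E$ is Euclidean; the $\on^2\bT$ bound follows by one more application of the product rule. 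Your formula is sharper (it gives the exact norm, not just the order of magnitude) and avoids the invariance argument, at the cost of needing the observation $\on_X\mathbf P=X$. For (iii), the paper passes through the reference metric $G_E'$ for the static observer $\ppt$ and invokes the equivalence Lemma \ref{l-tilt-equivalent} with $v=t/\tau$, whereas you compute $|||\p/\p r|||_{G_E}^2=1+2r^2/\tau^2$ directly from the definition of $G_E$; these are essentially the same estimate organized differently. Both treatments of part (i) and of the compactness claim agree.
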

\begin{proof} (i) follows by direct computation.

(ii) Since $\on \tau=-\frac1\tau\lf(t\ppt+x^i\frac{\p}{\p x^i}\ri)$, at $(\tau_1,0)$ the first derivatives and the second derivatives of the coefficients of $\ppt, \frac{\p}{\p x^i}$ are uniformly bounded by $C\tau_1^{-1}$ and $C\tau_1^{-2}$ for some constant $C$. Since $\on \tau=-\ppt$, one can conclude that $|||\on \bT||, |||\on^2\bT|||$ are bounded by $C\tau_1^{-1}$ and $C\tau_1^{-2}$  at $(\tau_1,0)$,  for some constant $C$. Since for any point
$p=(t, x^1,\cdot,x^n)\in S_{\tau_1}$, there is a restricted  Lorentz transformation  $\Phi$ which is an isometry of $N$, and maps $(\tau_1, 0)$ to $p$.  Moreover, $\tau\circ\Phi(q)=\tau(q)$ for all $q$. From this we can conclude that (ii)  is true.

(iii) Let $G_E'$ be the reference Riemannian metric with respect to$\ppt$. Then $|||\on \rho|||_{G'_E}\le \frac1{r+2}$. By (i) and Lemma \ref{l-tilt-equivalent}, we have
\bee
|||\on\rho|||\le    \frac {2t}\tau \cdot \frac1{r+2}=\frac{2(\tau^2+r^2)^\frac12} {\tau(r+2)}\le 2(1+\frac1{\tau_1}).
\eee
It is easy to see that (iii) is true.

\end{proof}

\begin{proof}[Proof of Proposition \ref{t-Mink}] (i) By Theorems \ref{t-shorttime Lorentz},   \ref{t-longtime}, \ref{t-convergence}, it is sufficient to show that if $\bF$ is a solution of the prescribed mean curvature flow \eqref{e-mcf-1} with $\cH= 1/\tau_0$ starting from $M$, with $0\le s\le s_0$,  so that $\kappa$ and $|\n^mA|$ are uniformly bounded on $M\times[0,s_0]$, then $0<\tau_1\le \tau(\bF)\le \tau_2<\infty$ for some $\tau_1, \tau_2$ independent of $s$. As in the proof of the last part of Theorem \ref{t-convergence},
the height function $u$ of $M_s$ will tends to $\tau_0$ at infinity uniformly for $s\in[0,s_0]$. Choose $\tau_2>\tau_0$ and $\tau_2>\sup_{M_0}u_0$, where $u_0$ is the height function of $M_0=M$. Since the mean curvature of $S_{\tau_2}=1/\tau_2<1/\tau_0$, by the maximum principle as in \cite[p.605]{EckerHusiken1991}, we can conclude that $\tau(\bF)<\tau_2$. Note that $\tau_2$ does not depend on $s_0$. Similarly, one can prove that $\tau(\bF)\ge \tau_1>0$ where $\tau_1$ does not depend on $s_0$. From this the result follows.

 (ii)  By the assumption, we have $\cH>\frac 1{2\tau_0}$ and $|H-\cH|\le  \frac 1{2\tau_0}-\ve_0$ for some $\ve_0>0$. Hence the proof  is similar to the proof of (i).

\end{proof}

{\it Example}: In the Minkowski space, let $f(t,\bx)=e^{-4t+(r^2+1)^\frac12}$ where $r=|\bx|$. We want to check that $\cH(t,\bx)=2-f(\bx,t)$ satisfies the assumptions in  Proposition \ref{t-Mink}(ii) with $\tau_0=\frac12$. We use $(r^2+1)^\frac12$ because $r$ is not a smooth function. Let us first check that $\sup_{S_\frac12}|\cH-2|<1$ on $S_\frac12$
On $S_\frac12$, $t^2= r^2+\frac14$ and $t\ge r, t\ge \frac12$ and so $f\le e^{-\frac12}<1$. Hence $\sup_{S_\frac12}|\cH-2|<1$. Next we want to check that $\cH$ is monotone. Let $w$ be a future directed timelike vector so that $w=a_0\ppt+\sum_{i=1}^n a_i\frac{\p}{\p x^i}$. Then $a_0\ge |\mathbf{a}|$ where $\mathbf{a}=(a_1,\dots, a_n)$. Now $\on \cH=f\lf(-4\ppt- (1+r^2)^{-\frac12}\sum_{i=1}^n x^i\frac{\p}{\p x^i}\ri)$. Hence
\bee
\la \on\cH,w\ra=f\lf(4a_0-(1+r^2)^{-\frac12}\sum_{i=1}^na_i x^i\ri)\ge f\lf(4a_0-r(1+r^2)^{-\frac12}|\mathbf{a}|\ri)\ge 0.
\eee
Next, we want to show that $|||\on^k\cH|||_{\Omega_{\tau_1,\tau_2}}<\infty$ for any $0<\tau_1<\tau_2<\infty$. It is easy to see that $\cH$ is uniformly bounded in $N$.
Let $G_e$ be the Euclidean metric $dt^2+\sum_{i^1}^n (dx^i)^2$.  For any $k\ge 1$
$$
|\on^k\cH|_{G_e}\le Cf
$$
for some constant $C$ depending only on $k, n$. By Lemmas \ref{l-tilt-equivalent} , \ref{l-time function-1} , we have
\bee
|||\on^k\cH|||\le Cf\cdot\frac{t^k}{\tau^k}\le C'\tau_1^{-k}
\eee
for some constant $C'$. Here we have used the fact that $t\ge r$ in $\Omega_{\tau_1,\tau_2}$. Finally, we claim that $  d(p)|H-\cH|(p)\le C''$ on $S_\frac12$ for some constant $C''$ where $d(p)$ is the intrinsic distance of $S_\frac12$ form a fixed point. At a point $p=(t,\bx)$ in $S_\frac12$, one can check that $d(p)\sim \log r$, where $r=|\bx|$. From this it is easy to see the claim is true. Since the distance function on $S_\frac12$ has bounded gradient and Hessian away from $d=0$, hence all the conditions in Proposition \ref{t-Mink} are satisfied and we  obtain a complete spacelike hypersurface in $N$ with prescribed mean curvature given by $\cH$.

\subsection{Computation near the future null infinity of the Schwarzschild spacetime}\label{ss-Schwarzschild}

 We want to do some computations which might be related to the study of prescribed mean curvature flow in a asymptotically Schwarzschild spacetime introduced in \cite{AnderssonIriondo1999}. Let us first recall the future null infinity of the Schwarzschild spacetime.  The standard Schwarzschild metric in $r>2m>0$ is
\be\label{e-Sch-metric}
g_\mathrm{Sch}=-\lf(1-\frac{2m}r\ri)dt^2+\lf(1-\frac{2m}r\ri)^{-1}dr^2+r^2 \sigma.
\ee
defined on $\{r>2m\}$, $-\infty<t<\infty$ where $r=\sum_{i=1}^3(x^i)^2$ with $(x^1,x^2,x^3)\in \R^3$ and $\sigma$ is the standard metric of the unit sphere $\mathbb{S}^2$. Consider the retarded null coordinate

\be\label{e-uv}
     v=t-r_*
\ee
where $r_*=r+2m \log\lf(\frac r{2m}-1\ri)$ in the region $r>2m$. Let $x=r^{-1}$,

\be\label{e-metric-null}
\begin{split}
g=g_{\mathrm{Sch}}=&-(1-2mx)d {   v}^2+2x^{-2}  d {  v}dx+x^{-2} \sigma\\
=:&\,x^{-2}\wt g,
\end{split}
\ee
with $ 0<x<\frac1{2m}$, $-\infty<v<\infty$.
Here the unphysical metric $\wt g$ is
the product metric:
\be\label{e-unphysical-1}
\wt g=(\sigma_{AB})\oplus\left(
      \begin{array}{cc}
        0 & 1 \\
        1 &-x^2(1-2mx) \\
      \end{array}
    \right).
\ee
Here $(\sigma_{AB})$ is the standard metric for $\mS^2$ in local coordinates $y^1, y^2$. $\wt g$ can be extended   smoothly   is Lorentz up to $x=0$. Then future null infinity $\mathcal{I}^+$ can be identified as $x=0$ in the compactification.   In the following, let $h=1-\frac{2m}r=1-2mx$.
Given a smooth function $f(\by)$ on $\mathbb{S}^2$. Consider the cut   $\mathcal{C}$ given by $(\by,   f(\by)), \by\in \mathbb{S}^2$ in $\cI^+$.
For $\tau>0$, let

\be\label{e-foliation-1}
 P(  \by, x, \tau)=f(\by)+x\phi(\tau,\by)+\frac1{2!}x^2\psi(\tau,\by),
\ee
where $\phi=P_x, \psi=P_{xx}$ at $s=0$ are smooth functions in $\tau, \by$, given by

\be\label{e-phi-psi}
\left\{
  \begin{array}{ll}
   \phi=-\frac12\lf(\tau^2+|\wn f|^2_{\mS^2}\ri); \\
   \psi=\frac12\lf(\tau^2\wt\Delta f+\la \wn|\wn f|^2_{\mS^2},\wn f\ra_{\mS^2}\ri).
  \end{array}
\right.
 \ee
 Here $\wn, \wt\Delta$ are the covariant derivative and the Laplacian of the standard $\mS^2$.
 By  \cite[Theorem 3.1]{LST}, so that if  $\Sigma_\tau$ is the surface given by $(\by, x)\to (\by, x, -P)$ in the $\by, x, v$ coordinates, then $\Sigma_\tau$ is spacelike near $x=0$.
Let $0<\tau_1<\tau_2<\infty$ be fixed. Let
$$
M=\{\by\in \mathbb{S}^2, x\in (0,\frac1{2m}),\tau\in (\tau_1,\tau_2)\}=\mS^2\times(0,s_0)\times(\tau_1,\tau_2).
$$
Consider the map $\Phi$ from $M$ to the Schwarzschild spacetime in $\by, x, v$ coordinates defined by:
\be\label{e-Phi}
\Phi(\by, x, \tau)=(\by, x, v(\by,x, \tau))
\ee
with  $v(\by,x,\tau)=-P(\by, x,\tau)$. Then there is $s_0>0$ such that $\Phi$ is diffeomoprhic to its image. Hence $\tau$ can be considered a function in the image. We also have $\on \tau$ is past directed time like vector field if $x_0>0$ is small enough. See \cite{Tam2024}. Let $\bT=-\a\on\tau$, where $\a$ is the lapse function of $\tau$.
We have the following facts which are related to the assumptions in the results in previous sections.

\begin{prop}\label{p-Sch-2} In the above setting, let $N=\Phi(M)$ and $g_\mathrm{Sch}$ be the Schwarzschild metric and let $\Theta$ be the reference Riemannian metric with respect to $\tau$. Denote the norm with respect to $\Theta$ by $|||\cdot|||$. Then there exists $x_0>0$ such that the following are true.
\begin{enumerate}
  \item [(i)] $|||\on \bT|||_{1,N}$ is finite.
  \item [(ii)] $\a, \a^{-1}$ and $|||\on\a|||$ are uniformly bounded.
  \item[(iii)] $|||\on\log r|||$ is uniformly bounded.

  \item[(iv)] The second fundamental form $A$ and its covariant derivatives of the level set $\tau$=constant are uniformly bounded.
      \item [(v)] $|||\oRm|||_{k,N}$ is finite for all $k\ge 0$.
      \item[(vi)] The mean curvature of the level set $\tau$=constant satisfies:
      $H=\frac1\tau+O(x^2)$ and that $d(p)|H(p)-\frac1\tau|\to 0$ as $p\to\infty$ on $\tau$=constant, where $d(p)$ is the intrinsic distance from a fixed point on the level surface.
\end{enumerate}
  \end{prop}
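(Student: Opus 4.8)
\emph{Proof proposal.} The plan is to carry out every computation in the \emph{unphysical} picture, where the conformally rescaled metric $\wt g=x^{2}g_{\mathrm{Sch}}$ of \eqref{e-unphysical-1} is a smooth, $v$--independent Lorentz metric on a full neighbourhood of $\{x=0\}$ in the $(\by,x,v)$--chart, with coefficients and inverse bounded there uniformly. Since the spacetime is $4$--dimensional (each slice $\Sigma_{\tau}$ is $3$--dimensional), and since by \cite{Tam2024} the map $\Phi$ of \eqref{e-Phi} is for $x_{0}$ small a diffeomorphism onto its image $N$ with $\on\tau$ past timelike there, $\tau$ is a genuine smooth function on $N$. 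The first step is to set up the conformal dictionary. Writing $g_{\mathrm{Sch}}=e^{2u}\wt g$ with $u=-\log x=\log r$, one has $\on\tau=x^{2}\wn\tau$ and $\on\log r=-x\,\p_{v}$; and, with $\wt\bT=x^{-1}\bT$ the $\wt g$--unit normal of $\Sigma_{\tau}$, the reference Riemannian metric $\Theta$ of $g_{\mathrm{Sch}}$ relative to $\bT$ and the reference Riemannian metric $\wt\Theta$ of $\wt g$ relative to $\wt\bT$ satisfy $\Theta=x^{-2}\wt\Theta$, so that for any tensor $B$ with $p$ contravariant and $q$ covariant slots,
\be\label{e-conf-weight}
|||B|||_{\Theta}=x^{\,q-p}\,|||B|||_{\wt\Theta}.
\ee
The second half of the dictionary is that $\on=\wn+\mathcal C$ where the difference tensor $\mathcal C$ has $(\by,x,v)$--components equal to $x^{-1}$ times smooth bounded functions, so each application of $\on$ inflates the coordinate size of a tensor by at most one factor $x^{-1}$ — exactly the factor that \eqref{e-conf-weight} cancels via the new covariant slot.

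With this dictionary the whole proposition reduces to a single claim about the unphysical picture: as $x\downarrow0$ (uniformly over $\by\in\mS^{2}$ and $\tau\in[\tau_{1},\tau_{2}]$), $\wt\bT$ and its $\on$--derivatives, the $\wt g$--second fundamental form of $\Sigma_{\tau}$ and its derivatives, and the $\wt\bT$--reference norms of $\oRm$ all have bounded coordinate components. This claim is the crux, and it is where the ansatz \eqref{e-phi-psi} is used. The slices $\Sigma_{\tau}$ degenerate onto the single cut $\mathcal C=\{(\by,0,-f(\by))\}$ as $x\to0$, but the choice of the first two Taylor coefficients $\phi=P_{x}|_{x=0}$ and $\psi=P_{xx}|_{x=0}$ makes that degeneration regular: the tangent $3$--planes $T\Sigma_{\tau}=\mathrm{span}\{\p_{y^{A}}-P_{y^{A}}\p_{v},\ \p_{x}-P_{x}\p_{v}\}$ extend smoothly and stay $\wt g$--spacelike down to $x=0$ (the Schur complement of the induced $3\times3$ Gram matrix is $-2\phi-|\wn f|^{2}_{\mS^{2}}=\tau^{2}>0$ by \eqref{e-phi-psi}), so $\wt\bT$ extends smoothly with $\wt\bT^{x}\to-\tfrac1\tau$ and the $\wt g$--lapse $\wt\alpha=x(1+O(x))$, and then the $\wt g$--second fundamental form of $\Sigma_{\tau}$ and the curvature of $g_{\mathrm{Sch}}$ (which differs from that of $\wt g$ only by the standard conformal corrections, quadratic in $\wn u$ and linear in $\wn^{2}u$) are smooth up to $x=0$ in the chart. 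These are essentially the regularity statements behind \cite[Theorem 3.1]{LST}, which I would invoke in this coordinate form.

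Granting that, (i)--(v) follow by feeding the bounds through \eqref{e-conf-weight}. For (i): $\bT=x\wt\bT$ gives $|||\on^{l}\bT|||_{\Theta}=x^{\,l-1}|||\on^{l}\bT|||_{\wt\Theta}=O(x^{\,l-1})\cdot O(x^{\,1-l})=O(1)$ for $l=1,2$. For (iii): $|||\on\log r|||_{\Theta}^{2}=\Theta(\on\log r,\on\log r)=x^{2}\wt\Theta_{vv}=O(1)$. For (ii): $\alpha^{-2}=-g_{\mathrm{Sch}}(\on\tau,\on\tau)=-x^{2}\wt g(\wn\tau,\wn\tau)=1+O(x)$ (a computation that also uses $2\phi+|\wn f|^{2}_{\mS^{2}}=-\tau^{2}$), giving $\alpha,\alpha^{-1}$ bounded, and $\on\alpha$ is bounded in $\Theta$ for the same reason as $\on\log r$. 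For (iv): the conformal law $A_{g_{\mathrm{Sch}}}=x^{-1}\bigl(A_{\wt g}+(\wt\bT u)\,\wt g|_{\Sigma_{\tau}}\bigr)$, read against the induced metric $x^{-2}\wt g|_{\Sigma_{\tau}}$, gives bounded $A$ and covariant derivatives. For (v): the conformal curvature identity together with \eqref{e-conf-weight}. Finally, (vi) combines the conformal transformation rule for the (trace) mean curvature with the second--order expansion of \cite[Theorem 3.1]{LST}: the leading order reproduces the constant--mean--curvature value $\tfrac1\tau$ of the model slice $S_{\tau}$ of Proposition \ref{t-Mink}, and the choice of $\psi$ is precisely what annihilates the $O(x)$ correction, so $H=\tfrac1\tau+O(x^{2})$; since the induced metric on $\Sigma_{\tau}$ is $x^{-2}$ times a metric extending smoothly and positive--definitely to $x=0$ (hence asymptotically hyperbolic with intrinsic distance $d(p)\sim\log r=-\log x$), one concludes $d(p)\,|H(p)-\tfrac1\tau|=O\!\bigl(x^{2}\log\tfrac1x\bigr)\to0$.

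I expect the main obstacle to be exactly the regularity of the degeneration of the $\Sigma_{\tau}$--foliation onto the cut $\mathcal C$ (which underlies all of (i)--(v)) together with the cancellation of the linear--in--$x$ term in $H$ (needed for (vi)); both are consequences of the precise second--order asymptotics in \cite[Theorem 3.1]{LST} and the diffeomorphism/positivity statements in \cite{Tam2024}, and once those are available the remainder is conformal bookkeeping governed by \eqref{e-conf-weight}.
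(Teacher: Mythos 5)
Your argument is correct in outline, but it takes a genuinely different route from the paper's. The paper works entirely in the \emph{physical} Schwarzschild picture: for (i), (ii), (iv) it directly cites \cite[Lemmas 3.2, 3.4, 3.7, 3.8]{Tam2024}; for (iii) and (v) it introduces the auxiliary reference metric $G_E'$ associated with the static time function $t$ (normal $\wt\bT=h^{-1/2}\p_t$), computes $\la\bT,\wt\bT\ra=O(x^{-1})$ as in \eqref{e-T-T}, and then builds the explicit orthonormal frame $w_1,\dots,w_4$ from \eqref{e-frame-1} with $|||w_i|||_{G_E'}\le C$ for $i\le3$ and $|||w_4|||_{G_E'}\le Cx^{-1}$. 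The point the paper is at pains to make in (v) is that one \emph{cannot} simply feed $|||\oRm|||_{G_E'}=O(x^3)$ through Lemma~\ref{l-tilt-equivalent}, since $\oRm$ is a $4$-tensor and $(x^{-1})^4\cdot x^3$ diverges; instead one must use the structural fact that $\oR(w_4,w_4,\cdot,\cdot)=0$ by antisymmetry, so at most two indices equal $4$, giving $x^3\cdot(x^{-1})^2=x$. Your conformal route sidesteps this obstruction entirely: the scaling identity \eqref{e-conf-weight} keeps exact track of the tensor weight, and because $\oRm_{g_{\mathrm{Sch}}}=e^{2u}\,W_{\wt g}$ (Ricci flatness plus conformal covariance of the Weyl tensor) the coordinate components of $\oRm_{g_{\mathrm{Sch}}}$ are $O(x^{-2})$ rather than the naive $O(x^{-4})$ — this built-in gain of $x^2$ plays exactly the role of the $\oR(w_4,w_4,\cdot,\cdot)=0$ observation in the paper. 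Conceptually your approach is more systematic (all of (i)--(v) reduce to one bookkeeping identity once the unphysical regularity at $x=0$ is granted), while the paper's approach is more explicit about what precisely must be computed and stays closer to the inputs available from \cite{Tam2024}.

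Two points you should tighten. First, the claim that $\wt\bT$ \emph{extends smoothly} to $x=0$ has to be understood slice-by-slice: since all the $\Sigma_\tau$ collapse onto the single cut $\mathcal C$ at $x=0$, the limit of $\wt\bT$ at $(\by,0,-f(\by))$ depends on which $\tau$ you approach along, so $\wt\bT$ (and hence $\wt\Theta$) is not a smooth vector field on a full $(\by,x,v)$-neighbourhood of $\mathcal C$; what you need, and what does hold, is that the components of $\wt\bT$ are smooth and bounded as functions of $(\by,x,\tau)$ on $\mS^2\times[0,x_0]\times[\tau_1,\tau_2]$, and that $\wt\Theta$ is there uniformly equivalent to the Euclidean coordinate metric — you should say this explicitly, since otherwise the invocation of \eqref{e-conf-weight} loses its force. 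Second, the line $|||\on\log r|||_\Theta^2=x^2\wt\Theta_{vv}$ is off by the conformal weight: with $\Theta=x^{-2}\wt\Theta$ and $(\on\log r)^a=-x\,\wt g^{ax}$ one gets $|||\on\log r|||_\Theta^2=\wt\Theta_{vv}+O(x^2)$, which is still $O(1)$; the conclusion stands, but as written the exponent is wrong. Neither issue affects the soundness of the approach.
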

  \begin{proof}

(i) follows from \cite[Lemmas 3.7, 3.8]{Tam2024}.

(ii) follows from \cite[Lemma 3.2]{Tam2024}.

(iii) On the other hand, let $\wt\bT=h^{-\frac12}\ppt$ in the standard coordinates $t, x^i$ for the exterior of the Schwarzschild spacetime. Let $G_E'$ be the reference Riemannian metric with respect to $t$.  One can compute by (ii):

\be\label{e-T-T}
\la \bT,\wt\bT\ra_G=\a h^{-1}\la \on\tau,\ppt\ra_G=-\a h^{-1}P_\tau^{-1}=O(x^{-1})
\ee
near $x=0$. From this it is easy to see that (iii) is true.

(iv) The bound of $A$ follows from the \cite[Lemma 3.4]{Tam2024}. From the proof of this fact, one can also show that   $\n^k A$ is also uniformly bounded for all $k\ge 1$.

(v)
  To prove $|||\oRm|||$ is bounded, we need to prove that $\oRm$ is bounded when evaluated at an orthonormal basis $w_1, w_2, w_3, w_4$ where $w_4=\bT$ and $w_1,w_2, w_3$ are tangential to the level surface of $\tau$. Note that one cannot just appeal to Lemma \ref{l-tilt-equivalent}  to conclude that $|||\oRm||| $ is bounded because $|||\oRm|||_{G_E'}$ decays like $x^3$, $\oRm$ is a four tensor, and we only have \eqref{e-T-T}.   Let us choose $w_i$, $1\le i\le 3$ as follows. Let $y^1, y^2$ be local coordinates of $\mS^2$, and let $\Phi$ be the diffeomorphism in \eqref{e-Phi}:
\be\label{e-frame-1}
\left\{
  \begin{array}{ll}
e_A=:\Phi_*(\frac{\p}{\p y^A})=-P_{A} \p_v+\p_{A}, A=1, 2;\\
e_3=:\Phi_*(\frac{\p}{\p x })=-P_x\p_v+ \p_x;\\
  e_4=:\Phi_*(\frac{\p}{\p\tau})=-P_\tau \p_v.
  \end{array}
\right.
\ee
Then $e_1, e_2, e_3$ are tangential to the level set of $\tau$. We have
\be\label{e-tangential norm}
\left\{
  \begin{array}{ll}
    \la e_A, \wt\bT\ra = -P_Ah^\frac12&\hbox{$A=1, 2$;} \\
    \la e_3, \wt\bT\ra = -P_xh^\frac12.  \\
  \end{array}
\right.
\ee
Since $\la e_i,e_i\ra=P_i^2+r^2=P_i^2+x^{-2}$, for $i=1,2,3$ we have
$$
|||e_i|||_{G_E'}^2= \la e_i,e_i\ra+2 (\la e_i, \wt\bT\ra)^2 \le Cx^{-2}.
$$
We may obtain orthonormal basis $w_i$ from $e_i$ so that $w_i=xc_{ij}e_j$, with $c_{ij}$ being bounded, see \cite[Lemma 3.1]{Tam2024}.
$|||w_i|||_{G_E'}\le C$ for some constant $C$. From \eqref{e-T-T}, we also have $|||w_4|||_{G_E'}=|||\bT|||_{G_E}\le Cx^{-1}$. To summarise:
\be\label{e-norm}
\left\{
  \begin{array}{ll}
    |||w_i|||_{G_E'}\le C, & \hbox{for $1\le i\le 3$;} \\
   |||w_4|||_{G_E'}\le Cx^{-1},
  \end{array}
\right.
\ee
for some constant $C>0$, provided $s_0$ is small enough.
Now we are ready to estimate $|||\oRm|||$. Since $\oR(w_4,w_4,\cdot,\cdot)=0$, by \eqref{e-norm}, we have
\bee
|\oR(w_a,w_b,w_c,w_d)|\le |||\oRm|||_{G_E'}|||w_a|||\, |||w_b|||\, |||w_c|||\, |||w_c|||\le Cx.
\eee
because $|||\oR|||_{G_E'}\le Cx^3$. Therefore, $|||\oRm|||\le Cx$. Similarly, one can conclude that $|||\on^k\oRm||| \le Cx.$

(vi) By \cite[Lemma 2.1]{LST}, The fact that $|H-\frac1\tau|=O(x^2)=O(r^{-2})$ follows from \cite[Theorem 2.1]{LST} and the definition of $P$. Since $\tau$=constant can be written as a spacelike graph over $\bx$ in the standard coordinates $t,x^i$, as before  one can see the last assertion is true.
\end{proof}

\appendix
\section{H\"older spaces }
We begin by explicitly defining the local H\"older norms used in the definition  parabolic H\"older spaces.  Let $\Omega$ be an open set in $\R^m$. Let $k\ge0$ be an integer and $0<\sigma<1$,  for $s_0>0$, the $C^{2k+\sigma, k+\frac\sigma2}$ norm on $ \Omega_{s_0}=\Omega\times[0,s_0]$  for functions $f(\bx,s)$  is defined as:
 \begin{equation*}
 \begin{split}
 ||f||_{2k+\sigma,k+\frac\sigma2;\Omega_{s_0}}=&\sum_{|\a|+2r=0}^{2k}\sup_{\Omega}\lf| \p^r_s\p^\a f \ri|\\
 &+\sum_{|\a|+2r=2k}\sup_{(\bx,s)\neq (\bx',s')\in  \Omega_{s_0}} \frac{\lf| \p^r_s\p^\a f(\bx,s)-\p^r_s\p^\a f(\bx',s')\ri|}{|\bx-\bx'|^\sigma+|s-s'|^\frac\sigma2}
  \end{split}
\end{equation*}
 where
 $$\p^r_s\p^\a u=\frac{\p^{r+|\a|}u}{\p s^r\p x^\a} $$
 and $\a$ is a multi-index. If there is no confusion on the domain  of definition of $f$, we will simply write the norm as  $||f||_{2k+\sigma,k+\frac\sigma2}$.

\section{A convergence lemma}

Let $U$ be a coordinate neighborhood of a Lorentz manifold with time function $\tau$, $\on \tau$ is past directed, so that $U$ is given by
$$
U =(-2,2)\times B(2)=\{(x^0,x^i,\dots,x^n)| |x^a|<2, 0\le a\le n\}
$$
where  $\tau=x^0$. Let
$$
V=(-1,1)\times B(1)=\{(x^0,x^i,\dots,x^n)| |x^a|<1, 0\le a\le n\}
$$
Moreover the metric is of the form $G=-\a^2d\tau^2+g_{ij}dx^idx^j$, which is zero shift. Assume the lapse function $\a$ is uniformly bounded from above and below. We also assume that $g_{ij}$ is uniformly equivalent to the Euclidean metric $\delta_{ij}$.
\begin{lma}\label{l-convergence} Let $M_k$ be a sequence of spacelike surfaces so that
\begin{enumerate}
  \item [(i)] $M_k\cap U$ is connected, $\p M_k\cap U=\emptyset$;
  \item [(ii)] the mean curvature $H_k$ of $M_k$  together with all its derivatives are bounded; and
  \item [(iii)] the tilt factor $\kappa_k$ of $M_k$ with respect to $\tau$ are uniformly bounded.
\end{enumerate}
If there is a limit point $p$ of $M_k$ inside $V$, then there is a subsequence of $M_k$, still denoted by $M_k$  and a spacelike hypersurface $M_\infty$ inside $V$ so that $d_h(M_k,M_\infty)\to 0$ in every compact subsets of $V$. Moreover, the mean curvature $H_\infty$ is such that $\lim_{k\to\infty}H_k=H_\infty$.
\end{lma}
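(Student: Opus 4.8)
The plan is to express each $M_k$ locally as a graph over a fixed ball, extract a convergent subsequence by Arzel\`a--Ascoli type compactness, and verify that the limit graph is spacelike with mean curvature equal to the limit of the $H_k$. First I would use the uniform tilt bound (iii) to show that, near the limit point $p \in V$, each $M_k$ (for $k$ large) can be written as a graph $x^0 = w_k(x^1,\dots,x^n)$ over a fixed coordinate ball $B(r) \subset \mathbb{R}^n$ centered at the spatial projection of $p$. Indeed, since $M_k$ is spacelike with tilt factor $\kappa_k$ bounded, its tangent spaces stay uniformly away from the null cone, so the projection $(x^0,x^i)\mapsto (x^i)$ restricted to $M_k$ is a local diffeomorphism with uniformly controlled inverse; the connectedness in (i) and the absence of boundary in $U$ let one continue the graph over a ball whose radius $r$ depends only on the tilt bound and the bounds on $\alpha$ and $g_{ij}$. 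The bound on $w_k$ itself comes from the fact that the limit point $p$ forces $w_k$ to pass near $p$, together with the gradient bound; so $|w_k| < 1$ and $|\nabla w_k|^2 \le 1 - \lambda^{-2}$ uniformly for some $\lambda$, exactly as in the setup of Lemma 2.12.

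Next I would upgrade this $C^1$ control to $C^\infty$ control. The graph $w_k$ satisfies the prescribed-mean-curvature equation of the form \eqref{e-mean curvature-1} with prescribed function $H_k$ on the right-hand side; this is a uniformly elliptic quasilinear equation whose ellipticity constants are controlled by $\lambda$ and by the bounds on $g_{ij}, g^{ij}, \partial_t g_{ij}$ (which hold by hypothesis since $\alpha$ is bounded above and below and $g_{ij}$ is uniformly equivalent to $\delta_{ij}$, and one may as well assume all derivatives of $g$ are bounded on $V$). Since $H_k$ and all its derivatives are uniformly bounded by (ii), elliptic Schauder estimates give uniform $C^{m,\sigma}$ bounds on $w_k$ on every interior ball $B(r')$, $r' < r$, for every $m$. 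By Arzel\`a--Ascoli and a diagonal argument over an exhaustion of $V$ by such balls, a subsequence $w_k \to w_\infty$ in $C^\infty_{\mathrm{loc}}$, and $H_k \to H_\infty$ along a further subsequence (or one uses that $H_k$ is already determined by $w_k$ via \eqref{e-mean curvature-1}, so $H_\infty$ is simply the mean curvature of the limit graph). The limit $w_\infty$ still satisfies $|\nabla w_\infty|^2 \le 1-\lambda^{-2}$, hence $M_\infty = \{x^0 = w_\infty\}$ is spacelike, and passing to the limit in the equation shows its mean curvature is $H_\infty$. Finally, $C^1_{\mathrm{loc}}$ convergence of the graphs implies Hausdorff convergence $d_h(M_k \cap W, M_\infty \cap W) \to 0$ for every compact $W \subset V$, since the graphs are uniformly Lipschitz and converge uniformly.

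The main obstacle I anticipate is not any single estimate but the bookkeeping needed to guarantee that the graph representation of $M_k$ genuinely covers a \emph{fixed} ball independent of $k$ and that $M_k \cap W$ is exhausted by this graph for compact $W \subset V$: a priori $M_k$ could, near the boundary of $U$, fold back or leave the coordinate chart, so one must argue that the graph, once started near $p$, extends all the way across $V$ before $|w_k|$ could reach $1$ — this uses the tilt bound to bound the slope and hence to bound how fast $w_k$ can vary, combined with the fact that $M_k$ has no boundary inside $U$ so the graph cannot simply terminate. A secondary technical point is making sure the ``limit point $p$ of $M_k$'' hypothesis is used correctly: it pins down where at least one sheet of $M_k$ lies, and connectedness then propagates this to a full graph; if $M_k$ had several components in $U$ only the component accumulating at $p$ is retained. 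Once these issues are handled, the remaining steps are standard quasilinear elliptic regularity and compactness.
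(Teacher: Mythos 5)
Your strategy is the same as the paper's: represent $M_k$ near the limit point $p$ as a graph $x^0=u_k(\bx)$, use the tilt bound (iii) to obtain a uniform Lipschitz bound on $u_k$ and to continue the graph over a fixed spatial ball, treat the prescribed--mean--curvature equation \eqref{e-mean curvature-1} as a uniformly elliptic equation with $C^\infty$-controlled data (by (ii) and the metric assumptions on $\alpha, g_{ij}$), apply Schauder estimates and Arzel\`a--Ascoli, and read off the Hausdorff convergence and the limit mean curvature from $C^\infty_{\mathrm{loc}}$ convergence of the graphs. All of those steps are the right ones and match the paper.

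There is one genuine gap, and it is precisely the step you flag as the main obstacle. You argue that the spatial projection restricted to $M_k$ is a local diffeomorphism (true, from the tilt bound) and that ``connectedness and absence of boundary in $U$ let one continue the graph.'' By itself this does not establish that $M_k\cap U$ is single-valued over its spatial projection: a local diffeomorphism from a connected boundaryless hypersurface need not be injective, and a priori the continuation could multi-sheet over part of the spatial ball. What rules this out is a Lorentzian causality fact, not an ODE continuation argument: since $U$ is simply connected, a connected spacelike hypersurface with $\partial M_k\cap U=\emptyset$ is achronal in $U$ (the paper cites O'Neill, p.~427), and an achronal set in a zero-shift coordinate slab is automatically a single-valued graph over a connected open $W_k\subset B(2)$. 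Your phrase ``connectedness then propagates this to a full graph'' is exactly the conclusion of that achronality lemma and needs to be cited or proved; the local-diffeomorphism property alone does not deliver it. Once achronality is in place, the remainder of your argument --- the fixed-radius $\delta$-ball continuation using the Lipschitz bound and lack of boundary in $U$, the uniform ellipticity, the interior Schauder estimates, the $C^\infty_{\mathrm{loc}}$ limit $u_\infty$ and its spacelikeness, the identification of $H_\infty$ as the mean curvature of the limit graph, and the Hausdorff convergence on compact subsets of $V$ --- is correct and parallels the paper's proof.
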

\begin{proof} First we may assume that $H_k\to H_\infty$ in $C^\infty$ norm in any compact set of $U$. Since $U$ is simply connected, $M_k$ is achronal \cite[p.427]{ONeill}. Hence each $M_k$ is a graph. Namely, for each $k$, there is a connected open set $W_k\subset U$ and a smooth function $u_k$ such that
\bee
M_k=\{(u_k(\bx), \bx)|\ \bx\in W_k\}
\eee
where $\bx=(x^1,\cdots, x^n)$. Since $M_k$ is spacelike, there is $C_1>0$ such that
$|Du_k|\le C_1$ for all $k$, where $Du_k$ is the gradient with respect to the Euclidean metric. Hence there is $\delta>0$ independent of $\bx$ such that if $(u_k(\bx), \bx)\in M_k\cap V$, with $|u_k(\bx)|\le 1-\eta$ for some $\eta>0$.  Then $B_{\bx}(\delta) \subset B_{\bx}(2\delta)\subset W_k$ and $|u_k(\by)|<1$ for $\by\in B_{\bx}(\delta).$ Here $B_{\bx}(\delta)=\{\by|\ |y^i-x^i|<\delta, 1\le i\le n\}$.
Since the tilt factors of $M_k$ are uniformly bounded  and $H_k\to H_\infty$, the equation satisfies by $u_k$ is uniformly elliptic in $B_{\bx}(\delta)$. Hence if $p$ is a limit point of $M_k$ inside $V$, passing to a subsequence, $u_k$ will converges in $C^\infty$ norm on compact sets of $B_{\bx}(\delta)$ where $\bx$ is the projection of $p$ to $B(2)$. From this it is easy to see the lemma is true.
\end{proof}


\begin{thebibliography}{1000}
\bibitem{AnderssonIriondo1999} Andersson, L.  ; Iriondo, M. S., {\sl
Existence of constant mean curvature hypersurfaces in asymptotically flat spacetimes},
Ann. Global Anal. Geom. \textbf{17} (1999), no. 6, 503--538.

\bibitem{AndrewsChowGuentherLangford}Andrews, B.; Chow, B.; Guenther, C.; Langford, M., {\sl Extrinsic geometric flows}. Graduate Studies in Mathematics, 206. American Mathematical Society, Providence, RI, [2020], ©2020. xxviii+759 pp. ISBN: 978-1-4704-5596-5

\bibitem{Bartnik1984} Bartnik, R., {\sl Existence of maximal surfaces in asymptotically flat spacetimes}, Comm. Math. Phys. \textbf{94} (1984), no. 2, 155--175.
 \bibitem{Bartnik1988} Bartnik, R., {\sl Regularity of variational maximal surfaces}, Acta Math. \textbf{161} (1988), no. 3--4, 145--181.

\bibitem{BartnikSimon1982} Bartnik, R.; Simon, L., {\sl Space-like hypersurfaces with prescribed boundary values and mean
curvature}, Commun. Math. Phys. \textbf{87} (1982)  131--152.



\bibitem{ChauChenHe}Chau, A.; Chen, J.;, He,W., {\sl Lagrangian Mean Curvature flowfor entire Lipschitz graphs}, Calc. Var. \textbf{44} (2012), 199--220.

    \bibitem{ChauTam2011}Chau, A.; Tam, L.-F., {\sl
On a modified parabolic complex Monge-Ampère equation with applications},
Math. Z. \textbf{269} (2011), no. 3-4, 777--800.



\bibitem{Ecker1993} Ecker, K., {\sl On mean curvature flow of spacelike hypersurfaces in asymptotically flat spacetimes}, J. Austral. Math. Soc. Ser. A \textbf{55} (1993), no. 1, 41--59.
\bibitem{Ecker1997}  Ecker, K., {\sl   Interior estimates and longtime solutions for mean curvature
flow of noncompact spacelike hypersurfaces in Minkowski space}, J. Differential Geom
\textbf{46} (1997), no. 3, 481--498.


\bibitem{EckerHusiken1991} Ecker, K.; Huisken, G., {\sl
Parabolic methods for the construction of spacelike slices of prescribed mean curvature in cosmological spacetimes}
Comm. Math. Phys. \textbf{135} (1991), no. 3, 595--613.



\bibitem{Friedman} A. Friedman, A., {Partial differential equations of parabolic type},
Englewood Cliffs, N.J., : Prentice-Hall (1964).

\bibitem{GentileVertman2023} Gentile, G.; Vertman, B.,{\sl
Prescribed mean curvature flow of non-compact space-like Cauchy hypersurfaces},
Ann. Global Anal. Geom. \textbf{64} (2023), no. 2, Paper No. 11, 45 pp.

\bibitem{Gerhardt2000} Gerhardt, C., {\sl Hypersurfaces of prescribed mean curvature in Lorentzian manifolds},  Math. Z. \textbf{235} (2000), no. 1, 83--97.
\bibitem{Gerhardt1983} Gerhardt, C., {\sl
H-surfaces in Lorentzian manifolds},
Comm. Math. Phys. \textbf{89} (1983), no. 4, 523--553.




\bibitem{Gerhardt1983}Gerhardt, C., {\sl
H-surfaces in Lorentzian manifolds},
Comm. Math. Phys. \textbf{ 89} (1983), no. 4, 523--553.
2

\bibitem{GilbardTrudinger}  Gilbarg, D;  Trudinger, N.S., {\sl
Elliptic partial differential equations of second order},
2nd ed., rev. 3rd printing. Berlin ; New York : Springer c1998.



\bibitem{Hamilton1982}  Hamilton, R.S., {\sl Three-manifolds with positive Ricci curvature}, J. Differ. Geom. \textbf{17}(1982), 255--306.
\bibitem{Hamilton1995} Hamilton, R. S., {\sl
A compactness property for solutions of the Ricci flow},
Amer. J. Math. \textbf{117} (1995), no. 3, 545--572.
\bibitem{HawkingEllis} Hawking, S.; Ellis, G., {\it The large-scale structure of spacetime},  Cambridge: Cambridge University Press 1973.
\bibitem{Huisken1984} Husiken, G.,{\sl Flow by mean curvature of convex surfaces into spheres}, J. Differential Geom.
\textbf{20} (1984), no. 1, 237--266.
\bibitem{Huisken1987} Husiken, G.,{\sl
The volume preserving mean curvature flow},
J. Reine Angew. Math. \textbf{382} (1987), 35--48.

\bibitem{KPLMMMSSV} Kr\"ocke, K.; Petersen, O.L.; Lubbe, F.; Marxen, T.; Maurer, W.; Meiser, W.; Schn\"urer, O.C.; Szab\'o.\'A;   Vertman, B., {\sl Mean curvature flow in asymptotically flat product spacetimes}, J. Geom. Anal. \textbf{31}(6), 5451--5479 (2021)

\bibitem{LST} Li, C.; Shi, Y.-G.; Tam, L.-F., {\sl Boundary behaviors of spacelike constant mean curvature surfaces in Schwarzschild spacetime},   arXiv:2202.00926.
\bibitem{Lieberman} Lieberman, G. M.,{\sl Second order parabolic differential equations}, World Scientific Publishing Co., Inc., River Edge, NJ, 1996.

\bibitem{ONeill} O'Neill, B., {\sl Semi-Riemannian geometry : with applications to relativity}, New York : Academic Press 1983.
\bibitem{Shi97} Shi, W.-X.,
 {\sl Ricci Flow and the uniformization on
complete non compact \K manifolds},
  J. of Differential Geometry  \textbf{45} (1997),  94-220.



\bibitem{Tam2010}Tam, L.-F.,{\sl
Exhaustion functions on complete manifolds},  Recent advances in geometric analysis, 211--215,
Adv. Lect. Math. (ALM), \textbf{11}, Int. Press, Somerville, MA, 2010.

\bibitem{Tam2024}Tam, L.-F.,{\sl
Spacelike CMC surfaces near null infinity of the Schwarzschild spacetime}, arXiv:2306.03353



\bibitem{Treibergs} Treibergs, A. E., {\sl Entire spacelike hypersurfaces of constant mean curvature in Minkowski space}, Invent. Math. \textbf{66} (1982), no. 1, 39--56.

\end{thebibliography}
\end{document}